\theoremstyle{definition}
\newtheorem{Def}{Definition}[section]
\newtheorem{ex}[Def]{Example}
\newtheorem{rem}[Def]{Remark}
\theoremstyle{plain}
\newtheorem{prop}[Def]{Proposition}
\newtheorem{thm}[Def]{Theorem}
\newtheorem*{thm*}{Theorem}
\newtheorem{lem}[Def]{Lemma}
\newtheorem{cor}[Def]{Corollary}
\newtheorem*{cor*}{Corollary}
\newtheorem*{con*}{Conjecture}
\newtheorem*{verm*}{Vermutung}
\newcommand{\bA}{\mathfrak A}
\newcommand{\bB}{\mathfrak B}
\newcommand{\bC}{\mathfrak C}
\newcommand{\bD}{\mathfrak D}
\newcommand{\conv}{\operatorname{conv}}
\newcommand{\Sper}{\operatorname{Sper}}
\newcommand{\GL}{\operatorname{{\mathbf{GL}}}}
\newcommand{\cH}{{\mathcal H}}
\newcommand{\fA}{{\mathfrak A}}
\newcommand{\fo}{{\mathfrak o}}
\newcommand{\C}{{\mathbb C}}
\newcommand{\R}{{\mathbb R}}
\newcommand{\Q}{{\mathbb Q}}
\newcommand{\N}{{\mathbb N}}
\newcommand{\Z}{{\mathbb Z}}
\newcommand{\Pol}{\operatorname{P}}
\title[Spectrahedral Shadows and Completely
Positive Maps]{Spectrahedral Shadows and Completely
Positive Maps on Real Closed Fields}
\author{Manuel Bodirsky}
\address{Technische Universit\"at, Dresden, Germany} 
\email{manuel.bodirsky@tu-dresden.de}
\author{Mario Kummer}
\address{Technische Universit\"at, Dresden, Germany} 
\email{mario.kummer@tu-dresden.de}
\author{Andreas Thom}
\address{Technische Universit\"at, Dresden, Germany} 
\email{andreas.thom@tu-dresden.de}
\thanks{Manuel Bodirsky received funding from the ERC under the European Community's Seventh Framework Programme (Grant Agreement no. 681988, CSP-Infinity). Mario Kummer was partially supported by DFG grant 421473641. Andreas Thom was partially supported by ERC Consolidator Grant 681207 and DFG SPP 2026.}
\newcommand{\comment}[1]{}
\begin{document}

\subjclass[2010]{Primary: 14P10, 90C22}

\begin{abstract}
 In this article we develop new methods for exhibiting convex semialgebraic sets that are not  spectrahedral shadows. We characterize when the set of nonnegative polynomials with a given support is a spectrahedral shadow in terms of sums of squares. As an application of this result we prove that the cone of copositive matrices of size $n\geq5$ is not a spectrahedral shadow, answering a question of Scheiderer. Our arguments are based on the model theoretic observation that any formula defining a spectrahedral shadow must be preserved by every unital $\R$-linear completely positive map $R\to R$ on a real closed field extension $R$ of $\R$.
\end{abstract}
\maketitle

\section{Introduction}
A set $S \subseteq {\mathbb R}^n$ is called a 
\emph{spectrahedron} 
if 
$$S = \{ (x_1,\dots,x_n) \mid A_0 + A_1 x_1 + \dots + A_n x_n \succeq 0\}$$
for some $k \times k$ real symmetric matrices $A_0,A_1,\dots,A_n$. Here $A\succeq0$ says that the real symmetric matrix $A$ is positive semidefinite, and the expression
$A_0 + A_1 x_1 + \cdots + A_n x_n \succeq 0$ is called a \emph{linear matrix inequality (LMI) of size $k$}. 
Note that spectrahedra are convex and \emph{semialgebraic}, i.e., 
definable over ${\mathbb R}$ by a first-order formula in the language of real-closed fields (i.e., the language of ordered rings). 

There has been ample interest in representing convex semialgebraic sets $K\subseteq\R^n$ as a \emph{spectrahedral shadow}. This amounts {to} finding real symmetric matrices $A_0,\ldots,A_n,B_1,\ldots,B_m$ of the same size such that $$K=\{x\in\R^n\mid \exists y\in\R^m\colon A_0+x_1A_1+\cdots+x_nA_n+y_1B_1+\cdots{+} y_mB_m\succeq0\}.$$
The interest stems primarily from the fact that there are efficient methods for approximately optimizing a linear function over such a set. 
It was shown by Helton and Nie \cite{heltonnie1} that every convex semialgebraic set satisfying certain smoothness and curvature conditions is a spectrahedral shadow. Moreover, Scheiderer proved that every  convex semialgebraic set in the plane is a spectrahedral shadow \cite{claus2}. On the other hand, not every convex semialgebraic set is a spectrahedral shadow. The first counterexamples were constructed by Scheiderer \cite{heltonniefalse}. Further examples were found in \cite{fawzi2019set, bettiol2019convex}, both using similar techniques as Scheiderer in his seminal work.

Recent efforts also focus on finding a representation as spectrahedral shadow with several LMIs of a bounded size. 
For instance Averkov~\cite{Ave} showed that the cone of $k\times k$ real symmetric positive semidefinite matrices has no semidefinite extended representation 
with LMIs of size less than $k$ (for a formal definition, see \Cref{sect:k-pos}). 
A generalization of this result in terms of convex geometry was given by Saunderson \cite{MR4080391}. On the positive side, Scheiderer \cite{MR4234144} proved that for every closed convex semialgebraic set $K\subseteq\R^2$ LMIs of size two are sufficient.

Our approach to these questions is based on an easy consequence of the Tarski principle for real closed fields. The question whether a given semialgebraic set $K \subseteq {\mathbb R}^n$ is a spectrahedral shadow is equivalent to the question whether $K$ has a primitive positive definition over the structure $\mathfrak A$ with domain $\mathbb R$ and with a relation $S \subseteq {\mathbb R}^n$ for every spectrahedron $S$.
By the Tarski principle for real closed fields, we may
equivalently study this question over real-closed field extensions $R$ of $\mathbb R$ instead of $\mathbb R$. 
More precisely, if $\phi$ is a formula in the language of real closed fields  that defines $K$,
let $K^*$ be the set defined by $\phi$ over $R$. Likewise, 
let ${\mathfrak A}^*$ denote 
the structure with domain $R$ which contains for every formula $\psi$ that defines a spectrahedron over ${\mathbb R}$
the relation defined by $\psi$ over $R$. 
Then $K$ has a primitive positive definition in ${\mathfrak A}$ if and only if $K^*$ has a primitive positive definition in $\mathfrak A^*$. 
A map $h \colon R \to R$ \emph{preserves} a relation $K \subseteq R^n$ 
if $(h(a_1),\dots,h(a_n)) \in K$ whenever $(a_1,\dots,a_n) \in K$.
We use the easy fact that $K^*$ does not have an existential positive definition in ${\mathfrak A}^*$ (and hence in particular no primitive positive definition) if
there exists a map $h \colon R \to R$ that preserves all the relations of ${\mathfrak A}^*$ but not $K^*$. In \Cref{app:model} we discuss this approach in more detail and prove a converse of the last statement.

We show that a map $L \colon R \to R$ preserves ${\mathfrak A}^*$ if and only if it is unital, $\R$-linear, and completely positive in the sense of Operator theory. Thus, in order to prove that $K$ is not a spectrahedral shadow, it suffices to exhibit a unital, $\R$-linear, and completely positive map $L\colon R \to R$ on a real closed field extension $\R\subseteq R$ that does not preserve the set defined by $\phi$.
In this way we prove that the set of copositive matrices of size $n$ is not a spectrahedral shadow whenever $n\geq 5$, a family of convex semialgebraic sets that resisted Scheiderer's efforts, see \cite[Question~5.2]{heltonniefalse}.
A real symmetric matrix $A$ is \emph{copositive} if $x^{\top} A x \geq 0$ for every non-negative real vector $x$.  
Note that for $n\leq4$  it follows from the main result in \cite{dian} that this set is a spectrahedral shadow. 

Note that convexity is not only preserved by conjunction (i.e., intersection) and existential quantification (i.e., projections), but also by universal quantification. 
In the light of the results mentioned above one may ask whether every convex semi-algebraic set has a definition using a formula from \emph{positive first-order logic} over the structure formed by all spectrahedra. Positive first-order logic is the extension of existential positive logic by universal quantification (so only negation from first-order logic is prohibited). 
Our proof shows that the considered convex semialgebraic sets are not definable from spectrahedra even in this more expressive logic. The reason is that the map $L \colon R \to R$ that we construct is  \emph{surjective}. 
Hence, we may combine the Tarski principle with the easy direction of Lyndon's preservation theorem (see, e.g., Corollary 10.3.5 in~\cite{HodgesLong}) 
to conclude that for example the set of copositive matrices does not have a definition from spectrahedra in positive first-order logic.

Apart from the Tarski principle the only ingredient to our proof is a separation theorem (\Cref{thm:seplin}) that follows from the results in  \cite{rcsep}. We will give a self-contained proof in \Cref{sec:appendi}.
 In particular, we do not make use of the theory developed in \cite{heltonniefalse}. Our result on copositive matrices is a consequence of the following more general result:
\begin{thm*}[see \Cref{thm:main} below]
 Let $S\subset\Z_{\geq0}^n$ a finite set and let $\Pol_+(S)$ the set of all nonnegative polynomials
 with coefficients from ${\mathbb R}$
 whose support is contained in $S$. Then the following are equivalent:
 \begin{enumerate}
  \item $\Pol_+(S)$ is a spectrahedral shadow.
  \item There exists an integer $d>0$ such that $p(x_1^d,\ldots,x_n^d)$ is a sum of squares for all $p\in\Pol_+(S)$.
 \end{enumerate}
\end{thm*}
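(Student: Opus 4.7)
I will prove the two implications separately. The direction (2)$\Rightarrow$(1) is routine and relies on the Gram matrix representation of the cone of sums of squares together with the closure of spectrahedral shadows under linear preimages. The direction (1)$\Rightarrow$(2) is the substantive one and uses the completely-positive-map framework introduced in the paper together with \Cref{thm:seplin}; we argue the contrapositive, constructing, from a presumed failure of (2), a unital $\R$-linear completely positive map $L\colon R\to R$ on a real closed field extension $R\supseteq\R$ that fails to preserve $\Pol_+(S)^{*}$.

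For (2)$\Rightarrow$(1) we may assume that $d$ is odd (if the given $d$ is even, we reduce to this case by a separate standard argument, using for instance that if $p(x_1^d,\ldots,x_n^d)=\sum q_i^2$ then $p(x_1^{kd},\ldots,x_n^{kd})=\sum q_i(x_1^k,\ldots,x_n^k)^2$ for every positive integer $k$). Consider the $\R$-linear substitution map $\sigma_d\colon\R^S\to\R^{dS}$, $p\mapsto p(x_1^d,\ldots,x_n^d)$, where $dS=\{d\alpha:\alpha\in S\}$. The cone of sums of squares with support in any fixed finite set is a spectrahedral shadow via its Gram matrix description, and spectrahedral shadows are closed under linear preimages; thus $\sigma_d^{-1}(\mathrm{SOS})$ is a spectrahedral shadow. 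By hypothesis (2), $\Pol_+(S)\subseteq\sigma_d^{-1}(\mathrm{SOS})$; conversely, if $\sigma_d(p)$ is a sum of squares then $\sigma_d(p)\geq 0$ on $\R^n$, which, since $y\mapsto y^d$ is bijective $\R\to\R$ for odd $d$, forces $p\geq 0$ on $\R^n$. Hence $\Pol_+(S)=\sigma_d^{-1}(\mathrm{SOS})$ is a spectrahedral shadow.

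For (1)$\Rightarrow$(2), assume for contradiction that $\Pol_+(S)$ is a spectrahedral shadow but that for every integer $d\geq 1$ there exists $p_d\in\Pol_+(S)$ for which $p_d(x_1^d,\ldots,x_n^d)$ is not a sum of squares. For each $d$, \Cref{thm:seplin} produces an $\R$-linear functional $\ell_d$ on polynomials of suitably bounded degree that is nonnegative on all squares yet strictly negative at $p_d(x_1^d,\ldots,x_n^d)$. The next step is to weave the sequence $(\ell_d)_{d\geq 1}$ into a single state on the polynomial ring over a non-Archimedean real closed field extension $R\supseteq\R$ -- for instance obtained as an ultraproduct, or as a field of Puiseux or Hahn series with an infinitesimal parameter encoding the passage $d\to\infty$ -- and then to perform a Gelfand-Naimark-Segal style construction to convert this state into a unital $\R$-linear completely positive map $L\colon R\to R$. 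The failure of preservation is then witnessed by the nonstandard element built from the sequence $(p_d)$, whose image under $L$ acquires a strictly negative component and therefore lies outside $\Pol_+(S)^{*}$, contradicting the assumption that $\Pol_+(S)$ is a spectrahedral shadow.

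The chief obstacle will be to ensure that the resulting $L$ is \emph{completely} positive and not merely positive, since complete positivity demands that every matrix amplification $L^{(k)}\colon M_k(R)\to M_k(R)$ be positive. This requires the separating functionals produced by \Cref{thm:seplin} to behave like vector states on an operator algebra. I expect to need a matrix-valued refinement of the separation theorem (separating a matrix polynomial from the cone of matrix sums of squares) together with a careful GNS construction, and then to invoke the Tarski principle once more to transfer the resulting operator-valued data back into a single self-map of the real closed field $R$ itself.
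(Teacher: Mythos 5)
Both directions of your argument have genuine gaps, one small and one central.

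For $(2)\Rightarrow(1)$, your reduction to odd $d$ does not work: passing from $d$ to $kd$ never turns an even exponent into an odd one, and the set of valid exponents is only closed under taking multiples, so there is no reason an odd $d$ satisfying (2) should exist (for the Motzkin polynomial one uses $d=2$, and nothing guarantees any odd exponent works). For even $d$ your identity $\Pol_+(S)=\sigma_d^{-1}(\mathrm{SOS})$ fails badly: $\sigma_d^{-1}(\mathrm{SOS})$ then contains every $p$ that is nonnegative merely on the closed positive orthant (e.g.\ $p=x_1$ when $(1,0,\dots,0)\in S$, since $x_1^d$ is a square). The repair is to intersect the preimages over all $2^n$ sign changes $x_i\mapsto s_ix_i$, $s\in\{\pm1\}^n$; this is exactly the cone $\Sigma_d(S)$ of \Cref{lem:props}, whose containment in $\Pol_+(S)$ is proved by choosing, for each evaluation point, signs making every coordinate nonnegative before extracting $d$-th roots.

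For $(1)\Rightarrow(2)$ you have correctly identified the overall strategy (contrapositive, separation, completely positive map, Tarski transfer), but the step you flag as the ``chief obstacle'' is precisely the content of the paper's Section~\ref{sect:k-pos}, and your proposed remedy (a matrix-valued separation theorem plus a GNS construction) is not the mechanism and is not needed. The paper works in the Hahn series field $H=R'[[\epsilon^{\Q^n}]]$ and defines $L_f(\sum_e c_e\epsilon^e)=\sum_e f(e)c_e\epsilon^e$ for $f\colon\Q^n\to R'$; \Cref{thm:totimpcomp} shows $L_f$ is completely positive as soon as $f$ is positive definite, which by \Cref{rem:groupsos} is equivalent to the \emph{scalar} functional $T_f$ being strictly positive on nonzero squares of the group ring $R'[\Q^n]$ --- exactly what \Cref{thm:seplin} provides (strictness being arranged by adding a small multiple of the strictly positive functional of \Cref{cor:exposmap}). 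You also misapply \Cref{thm:seplin}: it requires an element that is not a sum of squares in the group ring $R[\Q^n]$, i.e.\ one for which $p(x_1^d,\dots,x_n^d)$ fails to be SOS for \emph{every} $d$ (\Cref{rem:sos}), whereas your $p_d(x^d)$ only fails at the single level $d$ and may well be SOS in $R[\Q^n]$. Producing from your sequence $(p_d)$ a single witness failing at all levels simultaneously, possibly over a larger real closed field, is a separate compactness step; the paper handles it by writing $\Pol_+(S,R)=\bigcup_k\Sigma_k(S,R)$ as a countable union of parameter-free definable cones and invoking $\aleph_1$-saturation plus Tarski transfer. Your ``weave the $\ell_d$ into a single state'' gestures at this but leaves both the uniformization and the functional-to-field-endomorphism conversion unproved.
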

Finally, in \Cref{sec:ave} we reprove Averkov's above mentioned result in a similar fashion. Here we use results about positive definite maps \cite{Karlin}.

We now give a brief outline of our proof strategy. We first observe in \Cref{sect:k-pos} that a map $L \colon R\to R$ for some real closed field extension $R$ of $\R$ preserves {$\fA^*$} if and only if $L$ is unital, $\R$-linear, and completely positive in the sense that applying $L$ entry-wise to a positive semidefinite symmetric matrix over $R$ gives a positive semidefinite matrix. When $R=R'[[\epsilon^\Lambda]]$ is the field of Hahn series with value group $\Lambda$ and coefficients in another real closed field $R'$, we consider maps $L_f \colon R\to R$ of the form $L_f \left (\sum_{a\in\Lambda}c_a\epsilon^a \right )=\sum_{a\in\Lambda}f(a)c_a\epsilon^a$ for functions $f \colon \Lambda\to R'$. On the other hand, extending $f$ linearly, we obtain a linear functional $T_f \colon R'[\Lambda]\to R'$ on the group ring of $\Lambda$. The crucial insight of \Cref{sect:k-pos} is that $L_f$ is completely positive whenever $T_f$ is positive on every nonzero square in $R'[\Lambda]$. In \Cref{sect:groupring} we focus on the case $\Lambda=\Q^n$. Elements of the group ring $R'[\Q^n]$ can be regarded as functions on $(R'_{>0})^n$. If such a function takes only nonnegative values but is not a sum of squares in $R'[\Q^n]$, then it can be separated by a linear functional $T_f$ from the cone of sums of squares after a base change to a larger real closed field by \Cref{thm:seplin}. The corresponding map $L_f$ is then completely positive but does not preserve a certain convex cone of nonnegative polynomials. Applying this construction to the so-called Horn polynomial proves that the cone of copositive matrices of size $n\geq 5$ is not a spectrahedral shadow.

\bigskip

\noindent \textbf{Acknowledgements.}
We would like to thank Claus Scheiderer for helpful comments on a previous version of the manuscript.

\section{$k$-positive maps}
\label{sect:k-pos}
In this section let $R$ always be a real closed field extension of $\R$.
Let $\Gamma_k$ be the set of all first-order formulas in the language of real closed fields with parameters from $\R$ that express 
a linear matrix inequality of size $k$ with coefficients from $\R$, 
and let $\Gamma=\cup_{k=1}^\infty\Gamma_k$. Let ${\mathfrak A}_k$ be the relational structure with domain $\R$ whose relations consist of the sets defined by the formulas in $\Gamma_k$. We follow standard terminology in model theory, see, e.g.,~\cite{HodgesLong,Tent-Ziegler}. For the convenience of the reader, we have collected the relevant concepts in Appendix~\ref{sect:basic}. The signature of $\bA_k$ should not be confused with the signature of real closed fields. If a set $K$ has a primitive positive definition over ${\mathfrak A}_k$ then we say that $K$ has a \emph{semidefinite extended formulation with LMIs of size $k$}, following the terminology of Averkov~\cite{Ave}.

We say that a map $h \colon R \to R$ \emph{preserves} a formula in the language of real closed fields if it preserves the relation defined by the formula over $R$. 
The map $h$ is called \emph{unital} if it preserves the formula $x=1$, and for $R' \subseteq R$ it is called \emph{$R'$-linear} if it preserves the formula $z= \lambda \cdot x + \mu \cdot y$ for all $\lambda,\mu \in R'$. 

\begin{Def}
 Let $k\in\Z_{>0}$. A map $L\colon R \to R$ is called \emph{$k$-positive} if for every $k\times k$ symmetric positive semidefinite  matrix $A=(a_{ij})_{i,j}$ with $a_{ij}\in R$ the matrix $L(A):=(L(a_{ij}))_{i,j}$ is positive semidefinite as well. For technical reasons, we declare every map $L\colon R\to R$ to be $0$-positive. If $L$ is $k$-positive for every $k$, then, following the nomenclature of operator theory (see, e.g., \cite[\S1.5]{ozawa}), we say that $L$ is \emph{completely positive}.
\end{Def}

\begin{rem}
 Note that if $L\colon R \to R$ is $k$-positive, then it is also $k'$-positive for all {$k'\in\{0,\ldots, k\}$} since principal submatrices of positive {semidefinite} matrices are again positive semidefinite.
\end{rem}

\begin{lem}\label{lem:sdpdeg}
 Let $k\in\Z_{>0}$.
 A map $L\colon R\to R$ preserves all formulas in $\Gamma_k$ {(}respectively $\Gamma${)}
  if and only if $L$ is unital,  $\R$-linear, and $k$-positive {(}respectively completely positive{)}.
\end{lem}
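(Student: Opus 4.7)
The plan is to verify the equivalence direction by direction, treating the $\Gamma$-case as the union of all $\Gamma_k$-cases together with the observation that complete positivity is the conjunction of $k$-positivity for all $k$. The whole argument is a translation between syntactic preservation of certain simple LMIs and the three algebraic conditions, and it falls into two symmetric halves.

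For the ``only if'' direction, assume $L$ preserves every formula in $\Gamma_k$. I would first extract unitality from the pair of size-$1$ LMIs $x-1\geq 0$ and $1-x\geq 0$, which belong to $\Gamma_k$ for any $k\geq 1$ (for $k\geq 2$ one pads with $+1$'s on the diagonal, so that the padded diagonal block is PSD exactly when the scalar inequality holds). Applied at the element $x=1$ they force $L(1)\geq 1$ and $L(1)\leq 1$. Next I would obtain $\R$-linearity from the pair of size-$1$ LMIs $\pm(z-\lambda x-\mu y)\geq 0$ for each $\lambda,\mu\in\R$: for arbitrary $x,y\in R$ the triple $(x,y,\lambda x+\mu y)$ satisfies both, so preservation yields $L(\lambda x+\mu y)=\lambda L(x)+\mu L(y)$. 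Finally, to read off $k$-positivity I would use the single LMI of size $k$
\[
\sum_{i\leq j} x_{ij}\,(E_{ij}+E_{ji})\succeq 0
\quad(\text{with } E_{ii}\text{ counted once for } i=j),
\]
whose solution set is exactly the cone of symmetric PSD matrices with entries indexed by $(x_{ij})_{i\leq j}$; preservation of this formula is literally the statement that entrywise application of $L$ sends symmetric PSD $k\times k$ matrices over $R$ to symmetric PSD matrices.

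For the ``if'' direction, I first record that a unital $\R$-linear map is automatically the identity on $\R$, since $L(\lambda)=L(\lambda\cdot 1)=\lambda L(1)=\lambda$. Now take any $\phi\in\Gamma_k$ defining $\{(x_1,\dots,x_n)\in R^n:A_0+\sum_i x_i A_i\succeq 0\}$ with symmetric $A_i$ over $\R$. If $(x_1,\dots,x_n)\in R^n$ satisfies $\phi$, set $M:=A_0+\sum_i x_i A_i\succeq 0$. By $k$-positivity $L(M)\succeq 0$, where $L(M)$ denotes entrywise application. Since the entries of the $A_i$ lie in $\R$ and $L$ is $\R$-linear and fixes $\R$, the entrywise identity $L(M)=A_0+\sum_i L(x_i)A_i$ holds, so $(L(x_1),\dots,L(x_n))$ satisfies $\phi$. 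The $\Gamma$-case is then immediate since $\Gamma=\bigcup_k\Gamma_k$.

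There is no substantial obstacle; the only subtleties are bookkeeping ones. I need to be careful that the very simple formulas used to extract unitality and linearity really do sit inside $\Gamma_k$ for the given $k$ (handled by the diagonal padding trick so that size-$1$ LMIs are absorbed into size-$k$ LMIs), and that the LMI encoding symmetric PSD-ness is set up over the correct variable set $(x_{ij})_{i\leq j}$ so that entrywise application of $L$ matches preservation of the defined relation. Once these translations are in place the rest of the argument is a one-line computation in each direction.
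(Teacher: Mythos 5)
Your proposal is correct and follows essentially the same route as the paper: unitality and $\R$-linearity from pairs of opposite scalar LMIs, $k$-positivity from the single size-$k$ LMI $\sum_{i\le j}x_{ij}E_{ij}\succeq 0$ parametrizing symmetric PSD matrices, and the converse from the identity $L(A_0+\sum_i x_iA_i)=A_0+\sum_i L(x_i)A_i$. The diagonal-padding remark is a harmless extra precaution the paper leaves implicit.
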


\begin{proof}
 Assume that $L$ preserves all formulas in  $\Gamma_k$.  The formula $x=1$ is equivalent to $${\begin{pmatrix}x-1\end{pmatrix}\succeq0 \wedge \begin{pmatrix}1-x\end{pmatrix}\succeq0.}$$ Thus $L(1)=1$. A similar argument shows that $z=\lambda\cdot x+\mu\cdot y$ is in $\Gamma_k$ for all $\lambda,\mu\in\R$. 
 To prove that $L$ is $k$-positive, 
 note that a symmetric matrix $A = (a_{ij})_{1\leq i,j \leq k}$ is positive semidefinite if and only if 
 $(a_{11},a_{12},\dots,a_{1k},a_{22},a_{23},\dots,a_{{kk}})$  {satisfies} 
 the linear matrix inequality 
 $\sum_{1 \leq i\leq j\leq k} x_{ij} E_{ij} \succeq 0$  where $E_{ij}$ has entry $1$ at position $i,j$ and at position $j,i$ and entry $0$ at all other positions. Since $L$ preserves this {formula},
 we have that $L(A) \succeq 0$ whenever $A \succeq 0$. 
 
 Conversely, assume that $L$ is unital, $\R$-linear, and $k$-positive. Consider $k \times k$ real symmetric matrices $A_0,A_1, \ldots,A_n$.  
 Let $$K=\{x\in R^n\mid A_0+x_1A_1+\cdots+x_nA_n \succeq0\}$$ and let $x=(x_1,\ldots,x_n)\in K$. Then $L(x)=(L(x_1),\ldots,L(x_n))$ is also in $K$. Indeed, by the properties of $L$ we have \begin{align*}
 A_0+L(x_1)A_1+\cdots+L(x_n)A_n & = L(A_0 + x_1 A_1 + \cdots + x_n A_n ) \succeq 0. && \qedhere
 \end{align*}
\end{proof}

\begin{lem}\label{lem:matrixlinear}
 Let $R\subset R'$ a field extension and  $L\colon R' \to R'$ be an $R$-linear map. Let $A$ be an $m\times n$ matrix with entries in $R'$. Let $M$ and $N$ be $a\times m$ and $n\times b$ matrices with entries in $R$. Then we have $L(MAN)=ML(A)N$.
\end{lem}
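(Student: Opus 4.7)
The plan is to unpack the matrix product entry by entry and then apply $R$-linearity of $L$ to each resulting sum.

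First I would fix indices $1\leq i\leq a$ and $1\leq j\leq b$ and write out the $(i,j)$-entry of the product $MAN$ explicitly as a double sum
\[
(MAN)_{ij} \;=\; \sum_{k=1}^{m}\sum_{l=1}^{n} M_{ik}\,A_{kl}\,N_{lj}.
\]
The crucial observation is that the coefficients $M_{ik}$ and $N_{lj}$ lie in $R$, so the right-hand side is an $R$-linear combination of the entries $A_{kl}\in R'$.

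Next I would apply $L$ to both sides. Since $L$ is $R$-linear by hypothesis (and an $R$-linear map on $R'$ commutes with arbitrary finite $R$-linear combinations, by induction on the number of summands starting from the two-term case encoded in the defining formula $z=\lambda x + \mu y$ for $\lambda,\mu\in R$), we obtain
\[
L\bigl((MAN)_{ij}\bigr) \;=\; \sum_{k=1}^{m}\sum_{l=1}^{n} M_{ik}\, L(A_{kl})\, N_{lj} \;=\; \bigl(M\,L(A)\,N\bigr)_{ij}.
\]
Since this holds for every pair of indices $(i,j)$, the matrix identity $L(MAN)=M\,L(A)\,N$ follows.

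There is essentially no obstacle here: the only minor point worth being explicit about is that $R$-linearity as \emph{defined} in this paper means preservation of the ternary relation $z=\lambda x+\mu y$ for every $\lambda,\mu\in R$, and one needs the trivial induction extending this to finite $R$-linear combinations before plugging into the double sum. Once that is noted, the result is purely formal.
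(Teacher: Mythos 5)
Your proof is correct and matches the paper's approach: the paper simply states that the identity ``follows directly from $R$-linearity,'' and your entry-by-entry expansion of $(MAN)_{ij}$ as an $R$-linear combination of the $A_{kl}$ is exactly the computation being elided. The remark about extending the two-term linearity relation to finite sums by induction is a fine (if pedantic) point of care.
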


\begin{proof}
 This follows directly from $R$-linearity.
\end{proof}

Now let $\Lambda$ a divisible ordered {abelian} group. We consider  $H=R[[\epsilon^\Lambda]]$,
the field of Hahn series 
over $R$ with value group $\Lambda$ and the valuation $v\colon H\to\Lambda\cup\{\infty\}$ given by $v \left (\sum_{e \in \Lambda} c_e \epsilon^e\right ) = \min\{ e \in \Lambda \mid c_e \neq 0\}$.
Since $H$ is real closed~\cite[page 218, (2)]{Alling}, it is equipped with an order $\leq$ defined by $x \leq y \Leftrightarrow \exists z (z^2 + x = y)$.
Further, let $\fo$ the valuation ring of $H$, namely all elements of $H$ with nonnegative valuation. Then $\fo$ is a local ring with residue field $R$, the maximal ideal consisting of all elements with positive valuation. We denote by $\pi\colon\fo\to R$ the natural projection.

\begin{lem}\label{lem:kfromkm1}
  Let $L\colon H\to H$ be $R$-linear and $(k-1)$-positive
  and suppose that for all $a_1,\ldots,a_k \in H_{>0}$ with $v(a_1)<\cdots<v(a_k)$ the matrix $L(a\cdot a^{\top})$, where $a=(a_1,\ldots,a_k)^{\top}$, is positive semidefinite. Then $L$ is $k$-positive.
\end{lem}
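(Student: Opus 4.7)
The plan is to reduce to the rank-$1$ case and then exploit the $R$-linear structure of $H$ together with its valuation. First, I would use that, over the real closed field $H$, every $k\times k$ symmetric positive semidefinite matrix $A$ can be written as $A=\sum_{i=1}^k u_iu_i^{\top}$ for some $u_i\in H^k$---for instance via the square root $A^{1/2}$, which exists over any real closed field by a Tarski-principle application of the spectral theorem. Since $L$ is in particular additive, $L(A)=\sum_i L(u_iu_i^{\top})$, so it suffices to prove $L(aa^{\top})\succeq 0$ for every $a=(a_1,\ldots,a_k)^{\top}\in H^k$.

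Given such an $a$, I would consider the $R$-subspace $V\subseteq H$ spanned by $a_1,\ldots,a_k$, of dimension $r\leq k$ (the case $r=0$ being trivial), and exhibit an $R$-basis $b_1,\ldots,b_r$ of $V$ with $b_i\in H_{>0}$ and $v(b_1)<\cdots<v(b_r)$. For each $w\in\Lambda$, the $R$-linear map $V_{\geq w}:=\{u\in V:v(u)\geq w\}\to R$ sending $u$ to its coefficient of $\epsilon^w$ has kernel $V_{>w}:=\{u\in V:v(u)>w\}$, so $V_{\geq w}/V_{>w}$ embeds into $R$. Filtering $V$ by these subspaces shows that $\dim_R V$ equals the (necessarily finite) number of distinct valuations attained on $V\setminus\{0\}$; I would then pick for each such valuation $w_i$ an element $b_i\in V$ with $v(b_i)=w_i$, scaled by $\pm 1$ so that its leading coefficient is positive. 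The $R$-linear independence of $\{b_1,\ldots,b_r\}$ is then immediate, since any nontrivial $R$-combination has valuation equal to the smallest $w_i$ with a nonzero coefficient.

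Writing $a=Cb$ with $b=(b_1,\ldots,b_r)^{\top}$ and $C\in R^{k\times r}$, one has $aa^{\top}=C\,bb^{\top}\,C^{\top}$, and \Cref{lem:matrixlinear} then gives $L(aa^{\top})=C\cdot L(bb^{\top})\cdot C^{\top}$. If $r=k$, then $b$ satisfies exactly the hypothesis of the lemma and $L(bb^{\top})\succeq 0$ follows; if $r<k$, then $bb^{\top}$ is an $r\times r$ positive semidefinite matrix with $r\leq k-1$, and $L(bb^{\top})\succeq 0$ follows from the assumed $(k-1)$-positivity of $L$ (which automatically implies $r$-positivity for $r\leq k-1$). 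In either case $CL(bb^{\top})C^{\top}$ is positive semidefinite by congruence, completing the argument.

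The main obstacle is the basis construction---showing that every finite-dimensional $R$-subspace of $H$ admits an $R$-basis whose elements have pairwise distinct valuations. This hinges on the fact that nonzero elements of $R$ have valuation $0$, so that leading coefficients cannot cancel in nontrivial $R$-linear combinations of elements with strictly ordered valuations. Once this step is in place, the remainder is essentially a bookkeeping reduction via \Cref{lem:matrixlinear}.
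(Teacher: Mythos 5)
Your proof is correct and is essentially the paper's argument: both reduce to the rank-one case via $B=\sum_i b_ib_i^{\top}$ and then use \Cref{lem:matrixlinear} to replace $a$ by an $R$-linear transform whose entries are positive with strictly increasing valuations, falling back on $(k-1)$-positivity in the degenerate cases. Your valuation-adapted basis of the $R$-span of the $a_i$ is just a cleaner packaging of the paper's ``signed permutation followed by a product of elementary matrices'' step, and it handles the case where that span has dimension $r<k$ somewhat more explicitly than the paper does.
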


\begin{proof}
 Let $B$ be a $k\times k$ symmetric positive semidefinite matrix over $H$. Then $$B=\sum_{i=1}^k b_i\cdot b_i^\top$$for some column vectors $b_1,\ldots,b_k\in H^k$. Since $L$ is $R$-linear, we have $$L(B)=\sum_{j=1}^k L(b_i\cdot b_i^\top).$$ Since the sum of positive semidefinite matrices is positive semidefinite, it therefore suffices to show that $L(a\cdot a^{\top})$ is positive semidefinite for all $a=(a_1,\ldots,a_k)^{\top}$ with $a_1,\dots,a_k \in H$.
 Since $L$ is $(k-1)$-positive, we may  assume that $a_i\neq 0$ for all $i$. Let $M\in\GL_n(R)$. 
 {The matrix $L(a\cdot a^{\top})$ is positive semidefinite if and only if
 $M L(a \cdot a^{\top}) M^{\top}$ is positive semidefinite, which 
 equals
 $L((Ma)\cdot(Ma)^{\top})$ 
 by \Cref{lem:matrixlinear}.} Thus, after choosing $M$ to be a suitably signed permutation matrix, we may assume that $a_1\geq a_2\geq\cdots\geq a_k>0$. This implies that $v(a_1)\leq\cdots\leq v(a_k)$. Now, after choosing $M$ to be a {product of suitable elementary matrices}, we can make each inequality strict.
\end{proof}

\begin{lem}\label{lem:respsd}
 Let $A$ be a symmetric $n\times n$ matrix with entries in $\fo$. Then the following holds:
\begin{enumerate}
    \item If {$A = (a_{ij})_{1\leq i,j\leq n}$ is positive semidefinite, then $\pi(A) = (\pi(a_{ij}))_{1\leq i,j\leq n}$} is positive semidefinite.
    \item If $\pi(A)$ is positive definite, then $A$ is positive definite.
\end{enumerate}
\end{lem}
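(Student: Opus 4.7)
The plan is to reduce both statements to the interaction of the residue map $\pi$ with the order on $H$. First I would record the key monotonicity fact: for $x\in\fo$, if $x\geq 0$ in $H$ then $\pi(x)\geq 0$ in $R$, and if $\pi(x)>0$ in $R$ then $x>0$ in $H$. This rests on the concrete description of the ordering of the Hahn series field, namely that for a non-zero $x=\sum_{e\in\Lambda}c_e\epsilon^e$ one has $x>0$ iff $c_{v(x)}>0$. Since $v(x)\geq 0$ on $\fo$, the residue $\pi(x)=c_0$ is either zero (when $v(x)>0$) or coincides with the sign-determining leading coefficient (when $v(x)=0$), and both implications follow immediately.

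For part (1), given any $w\in R^n\subseteq\fo^n$, the scalar $w^{\top}Aw$ lies in $\fo$ and is non-negative in $H$ because $A$ is positive semidefinite. Since $\pi$ is a ring homomorphism fixing $R$ pointwise,
\[
 w^{\top}\pi(A)w \;=\; \pi(w^{\top}Aw) \;\geq\; 0
\]
by the monotonicity fact. As this holds for every $w\in R^n$, $\pi(A)$ is positive semidefinite.

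For part (2), let $v\in H^n$ be non-zero and set $m=\min_i v(v_i)\in\Lambda$. Replace $v$ by $v':=\epsilon^{-m}v\in\fo^n$; by construction at least one component of $v'$ has valuation $0$, so $\pi(v')\neq 0$ in $R^n$. Then $(v')^{\top}Av'\in\fo$, and applying $\pi$ gives $\pi((v')^{\top}Av')=\pi(v')^{\top}\pi(A)\pi(v')>0$ by the assumed positive definiteness of $\pi(A)$. The second half of the monotonicity fact then forces $(v')^{\top}Av'>0$ in $H$, and multiplying by $\epsilon^{2m}>0$ recovers $v^{\top}Av>0$.

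I do not anticipate a genuine obstacle here: the whole lemma is a routine unwinding of the compatibility between $\pi$ and the ordering on $H$. The only step worth pausing on is the scaling in (2); an arbitrary $v\in H^n$ need not lie in $\fo^n$, and even if it does its residue $\pi(v)$ may vanish, so one first normalises by a suitable power of $\epsilon$ to ensure that the reduced vector $\pi(v')$ is non-zero and can be fed into the assumed positive definiteness of $\pi(A)$.
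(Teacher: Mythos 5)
Your proof is correct, but it follows a different route from the paper. The paper reduces both parts to the principal-minors characterization of positive (semi)definiteness: it notes that $\pi$ commutes with determinants of principal submatrices, and that $\pi$ sends nonnegative elements of $\fo$ to nonnegative elements of $R$ (justified there by the existence of square roots of nonnegative elements in $\fo$), so the sign conditions on minors transfer in both directions. You instead work directly with the quadratic-form definition, using the compatibility of $\pi$ with the order (which you justify via the leading-coefficient description of positivity in the Hahn series field --- equivalent to the paper's square-root argument, since $H$ is real closed and hence uniquely ordered). Part (1) is then immediate by testing against $w\in R^n$, and part (2) requires your normalization $v'=\epsilon^{-m}v$ so that $\pi(v')\neq 0$; that scaling step is exactly the point where care is needed, and you handle it correctly. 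What each approach buys: the paper's argument is shorter for (2) (one nonpositive minor suffices) but leans on the ``all principal minors'' criterion, whereas yours stays with the definition of (semi)definiteness and makes the role of the residue map on the quadratic form explicit; both are complete.
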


\begin{proof}
 We use the well-known fact that a symmetric matrix is positive semidefinite (positive definite) if and only if all its principal minors are nonnegative (positive, respectively); see, e.g.,~\cite[Theorem 7.2.5]{HornJohnson}).
 Let $x\in \fo$ with $x\geq0$. Since every nonnegative element in $\fo$ has a square root in $\fo$, {we have that
 $\pi(x) \in R$ has a square root as well and hence $\pi(x)\geq0$. If $B$ is a principal submatrix of $A$, then $\det(\pi(B)) = \pi(\det(B))$. 
 Therefore, if every principal minor of $A$ is nonnegative, the same is true for every principal minor of $\pi(A)$.} This shows (1). For proving (2) assume that $A$ is not positive definite. Then there is a {principal} minor of $A$ which is nonpositive. The same minor of $\pi(A)$ will also be nonpositive showing that $\pi(A)$ is not positive definite.
\end{proof}

\begin{rem}
Note that the converse of neither $(1)$ nor $(2)$ in \Cref{lem:respsd} is true, as witnessed by the $1 \times 1$ matrices $(-\epsilon)$ and $(\epsilon)$.
\end{rem}

For any function $f\colon\Lambda\to R$ we define the function $L_f\colon H\to H$ via $$L_f \left (\sum_{e\in\Lambda}c_e\epsilon^e \right )=\sum_{e\in\Lambda}f(e)c_e\epsilon^{e}.$$ We want to derive a criterion on $f$ for $L_f$ to be $k$-positive. We start with the following refinement of \Cref{lem:kfromkm1}.

\begin{lem}\label{lem:kfromkm2}
 Let $L\colon H\to H$ be $R$-linear and $(k-1)$-positive and suppose that for all $a_1,\ldots,a_k \in H_{>0}$ with $v(a_1)<\cdots<v(a_k)$ and $\pi(\frac{a_i}{\epsilon^{v(a_i)}})=1$ the matrix $L(a\cdot a^{\top})$ is positive semidefinite, where $a=(a_1,\ldots,a_k)^{\top}$. Then $L$ is $k$-positive.
\end{lem}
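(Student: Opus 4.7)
The plan is to reduce the hypothesis of \Cref{lem:kfromkm1} to the hypothesis of the current lemma by a diagonal rescaling with real entries. Then we can directly invoke \Cref{lem:kfromkm1} to conclude that $L$ is $k$-positive.

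More precisely, let me take $a_1, \ldots, a_k \in H_{>0}$ with $v(a_1) < \cdots < v(a_k)$ as in the hypothesis of \Cref{lem:kfromkm1}, and set $a = (a_1, \ldots, a_k)^\top$. I would like to show $L(a \cdot a^\top) \succeq 0$. For each $i$, the element $a_i/\epsilon^{v(a_i)}$ lies in $\fo$ and its image under $\pi$ is the leading coefficient of $a_i$; since $a_i > 0$ in the ordered field $H$, this leading coefficient $c_i := \pi(a_i/\epsilon^{v(a_i)})$ is strictly positive in $R$. Set $b_i := a_i/c_i \in H_{>0}$. Then $v(b_i) = v(a_i)$ (so the valuations are still strictly increasing), and $\pi(b_i/\epsilon^{v(b_i)}) = 1$ by construction.

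Writing $b = (b_1, \ldots, b_k)^\top$ and $D := \diag(c_1, \ldots, c_k) \in \GL_k(R)$, we have $a = Db$ and hence
\[
a \cdot a^\top \;=\; D\,(b \cdot b^\top)\,D^\top \;=\; D\,(b\cdot b^\top)\,D.
\]
By \Cref{lem:matrixlinear} (applied with $M = D$ and $N = D$, both having entries in $R$), this yields
\[
L(a\cdot a^\top) \;=\; D\,L(b\cdot b^\top)\,D.
\]
By the hypothesis of the present lemma, $L(b\cdot b^\top)$ is positive semidefinite, and since $D \in \GL_k(R)$, the congruent matrix $D\,L(b\cdot b^\top)\,D$ is positive semidefinite as well. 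Thus $L(a\cdot a^\top) \succeq 0$. Now \Cref{lem:kfromkm1} applies and gives that $L$ is $k$-positive.

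There is essentially no obstacle here: the only point worth attention is that the scaling factors $c_i$ genuinely lie in $R$ (so that \Cref{lem:matrixlinear} is applicable) and are strictly positive (so that $D$ is invertible and a congruence by $D$ preserves positive semidefiniteness). Both facts follow immediately from $a_i > 0$ in $H$ and the definition of the order via the leading Hahn-series coefficient.
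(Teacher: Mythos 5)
Your argument is correct and is exactly what the paper intends: its one-line proof says the claim ``follows in the same way as \Cref{lem:kfromkm1} by another multiplication with a suitable diagonal matrix,'' and your diagonal matrix $D=\diag(c_1,\dots,c_k)$ of (positive, hence invertible) leading coefficients, combined with \Cref{lem:matrixlinear}, is precisely that multiplication. Your packaging of it as a clean reduction to the hypothesis of \Cref{lem:kfromkm1}, rather than a rerun of its proof, is a harmless and arguably tidier presentation of the same idea.
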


\begin{proof}
 This follows in the same way as \Cref{lem:kfromkm1} by another multiplication with a suitable diagonal matrix.
\end{proof}

\begin{cor}\label{cor:kposcrit}
Let $L\colon H\to H$ be $R$-linear and suppose that for all $a_1,\ldots,a_k \in H_{>0}$ with $v(a_1)<\cdots<v(a_k)$ and $\pi(\frac{a_i}{\epsilon^{v(a_i)}})=1$ the matrix $L(a\cdot a^{\top})$ is positive semidefinite, where $a=(a_1,\ldots,a_k)^{\top}$. Then $L$ is $k$-positive.
\end{cor}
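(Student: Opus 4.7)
The plan is to deduce \Cref{cor:kposcrit} from \Cref{lem:kfromkm2} by an induction on $k$ that drops the $(k-1)$-positivity hypothesis.

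The key observation is that the condition appearing in the corollary for the value $k$ automatically implies the analogous condition for the value $k-1$. Indeed, given any $(k-1)$-tuple $a_1,\ldots,a_{k-1} \in H_{>0}$ with $v(a_1)<\cdots<v(a_{k-1})$ and $\pi(a_i/\epsilon^{v(a_i)})=1$, I would extend it to a $k$-tuple by setting $a_k := \epsilon^{v(a_{k-1})+1}$. Then $v(a_k) > v(a_{k-1})$ and $\pi(a_k/\epsilon^{v(a_k)}) = 1$, so the assumption gives that $L(a\cdot a^{\top})$ is positive semidefinite for $a=(a_1,\ldots,a_k)^{\top}$. The upper-left $(k-1)\times(k-1)$ block of $a\cdot a^{\top}$ is $a'\cdot a'^{\top}$ with $a'=(a_1,\ldots,a_{k-1})^{\top}$, and since $L$ is applied entrywise, this block of $L(a\cdot a^{\top})$ is precisely $L(a'\cdot a'^{\top})$. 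Being a principal submatrix of a positive semidefinite matrix, it is itself positive semidefinite, which is the assertion of the hypothesis at level $k-1$.

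With this in hand, I would run induction on $k$. The base case $k=0$ is trivial since every map is declared to be $0$-positive by convention. For the inductive step, suppose the corollary has been proved for $k-1$ and that $L$ satisfies the hypothesis at level $k$. By the observation above, $L$ also satisfies the hypothesis at level $k-1$, so the inductive hypothesis yields that $L$ is $(k-1)$-positive. Now all the assumptions of \Cref{lem:kfromkm2} are in force, and applying that lemma yields that $L$ is $k$-positive, completing the induction.

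There is no serious obstacle here: the whole point of \Cref{lem:kfromkm1} and \Cref{lem:kfromkm2} was to isolate the genuine analytic content (scaling to unit leading coefficients and strictly increasing valuations via signed permutations and elementary matrices), so the corollary is pure bookkeeping once one notices that the hypothesis automatically propagates to smaller arity via principal submatrices.
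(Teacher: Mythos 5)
Your argument is correct and is essentially the paper's proof: both run an induction that supplies the missing $(k-1)$-positivity hypothesis of \Cref{lem:kfromkm2} by passing between $k$-tuples and shorter tuples via principal submatrices (the paper restricts a $k$-tuple to its top-left block where you extend a $(k-1)$-tuple upward, which is the same observation). The only cosmetic point is that the value group $\Lambda$ has no distinguished element $1$, so you should take $a_k:=\epsilon^{v(a_{k-1})+\lambda}$ for some $\lambda\in\Lambda_{>0}$ rather than writing $\epsilon^{v(a_{k-1})+1}$.
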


\begin{proof}
For every {$i\in\{0,\ldots,k\}$} we prove that $L$ is $i$-positive by induction on $i$. The case $i=0$ is trivial. Let {$i\in\{1,\ldots,k\}$}  and assume that $L$ is $(i-1)$-positive. Let $a_1,\ldots,a_k \in H_{>0}$ with $v(a_1)<\cdots<v(a_k)$ and $\pi(\frac{a_i}{\epsilon^{v(a_i)}})=1$. By assumption $L(a\cdot a^{\top})$ is positive semidefinite. This implies in particular that the top left $i\times i$ submatrix of $L(a\cdot a^{\top})$ is positive semidefinite as well. Thus, we can apply \Cref{lem:kfromkm2} which proves that $L$ is $i$-positive. 
\end{proof}

For later reference we also note the following condition for $L_f$ being a bijection.

\begin{lem}\label{lem:lfbij}
 Let $f \colon \Lambda\to R$ be such that $L_f \colon H\to H$ is unital and $2$-positive. Then $L_f$ is bijective.
\end{lem}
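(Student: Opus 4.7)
The plan is to reduce bijectivity of $L_f$ to the claim that $f(e)\neq 0$ for every $e\in\Lambda$. The map $L_f$ acts ``diagonally'' on the $R$-basis $\{\epsilon^e\}_{e\in\Lambda}$ of $H$, scaling the coefficient of $\epsilon^e$ by $f(e)$. If no $f(e)$ vanishes, then defining $g\colon\Lambda\to R$ by $g(e)=1/f(e)$ yields a well-defined map $L_g\colon H\to H$ (since the support of $L_g(y)$ equals the support of $y$ and hence is automatically well-ordered), and by construction $L_g$ is a two-sided inverse of $L_f$.

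The heart of the argument is therefore showing that $f$ is nowhere zero on $\Lambda$. My plan is to apply $2$-positivity to the rank-one positive semidefinite matrix $aa^{\top}$ where $a=(\epsilon^{e_1},\epsilon^{e_2})^{\top}\in H^2$ and $e_1,e_2\in\Lambda$ are arbitrary. This matrix is PSD because $\epsilon^e>0$ in $H$ for every $e\in\Lambda$. By $2$-positivity the matrix
\[
L_f(aa^{\top})=\begin{pmatrix}f(2e_1)\epsilon^{2e_1} & f(e_1+e_2)\epsilon^{e_1+e_2}\\ f(e_1+e_2)\epsilon^{e_1+e_2} & f(2e_2)\epsilon^{2e_2}\end{pmatrix}
\]
is also positive semidefinite. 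Nonnegativity of the diagonal entries gives $f(2e_i)\geq 0$ for $i=1,2$, and nonnegativity of the determinant yields the Cauchy--Schwarz-type inequality $f(2e_1)f(2e_2)\geq f(e_1+e_2)^{2}$.

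The final step exploits divisibility of $\Lambda$. Since $2\Lambda=\Lambda$, the first inequality already forces $f\geq 0$ on all of $\Lambda$. Specializing the second inequality to $e_1=e/2$, $e_2=-e/2$ and using unitality $f(0)=1$ gives $f(e)f(-e)\geq f(0)^{2}=1$, which forbids $f(e)$ from vanishing at any $e\in\Lambda$. I do not anticipate a serious obstacle in this proof; the only point to take care of is verifying that $L_{1/f}$ is well-defined as a map $H\to H$, which is immediate because it acts diagonally on monomials and therefore preserves supports.
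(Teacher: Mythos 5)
Your proof is correct and is essentially the paper's argument: your matrix $L_f(aa^{\top})$ with $e_1=e/2$, $e_2=-e/2$ is exactly the matrix $\begin{pmatrix}\epsilon^{e}&1\\1&\epsilon^{-e}\end{pmatrix}$ that the paper applies $L_f$ to, and both proofs extract $f(e)f(-e)\geq 1$ from the determinant and then invert via $g=1/f$. The only cosmetic differences are that you make the rank-one factorization and the use of divisibility of $\Lambda$ explicit, and you add the (correct, easy) remark that $L_{1/f}$ preserves well-ordered supports.
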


\begin{proof}
 Let $a\in\Lambda$ and consider the positive semidefinite matrix $$A=\begin{pmatrix}
                                                                    \epsilon^a& 1\\ 1& \epsilon^{-a}
                                                                   \end{pmatrix}.$$
 Since $L_f$ is unital, we have $$L_f(A)=\begin{pmatrix}
                                                                    f(a)\epsilon^a& 1\\ 1& f(-a)\epsilon^{-a}
                                                                   \end{pmatrix}.$$
 Since $L_f$ is $2$-positive, this shows that $f(a)f(-a)\geq1$, so in particular $f(a)\neq0$.
 Now one can define $$g:\Lambda\to R,\, a\mapsto \frac{1}{f(a)}$$ and one has $L_f\circ L_g=L_g\circ L_f=\textrm{id}_H$.
\end{proof}

\begin{Def}\label{def:totpos}
 A function $f\colon\Lambda\to R$ is called \emph{$k$-positive semidefinite}
 if for all $x_1,\ldots,x_k\in\Lambda$ the matrix $(f(x_i+x_j))_{i,j}$ is positive semidefinite. It is called \emph{$k$-positive definite} if for all pairwise different $x_1,\ldots,x_k\in\Lambda$ the matrix $(f(x_i+x_j))_{i,j}$ is 
 positive definite. Finally, the function $f\colon\Lambda\to R$ is called \emph{positive semidefinite} (\emph{positive definite}\footnote{{In~\cite[page 88]{ressel}, positive definite functions are called \emph{positive definite in the semigroup sense}; we use the shorter form since no confusion can arise in our context.}}) if $f$ is $k$-positive semidefinite ($k$-positive definite) for all $k\in\Z_{\geq0}$.
\end{Def}

\begin{rem}\label{rem:poskconvex}
 If $f_1,f_2\colon\Lambda\to R$ are $k$-positive (semi-)definite and $\lambda,\mu\in R_{>0}$, then $\lambda f_1$, $\mu f_2$ and $\lambda f_1+\mu f_2$ are $k$-positive (semi-)definite as well. Indeed, this follows from the fact that the cone of positive (semi-)definite matrices is convex. 
\end{rem}

The motivation for considering \Cref{def:totpos} is the following result.

\begin{thm}\label{thm:totimpcomp}
 Let $f\colon\Lambda\to R$ and consider $L_f\colon H\to H$ as defined above.
 \begin{enumerate}
     \item If $f$ is $k$-positive definite, then $L_f$ is $k$-positive.
     \item If $L_f$ is $k$-positive, then $f$ is $k$-positive semidefinite.
 \end{enumerate}
\end{thm}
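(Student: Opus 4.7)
The two statements are dual in flavour: (2) is a direct test of $L_f$ against rank-one matrices built from pure monomials in $\epsilon$, while (1) reduces via \Cref{cor:kposcrit} to an analogous rank-one test at which point a residue computation together with \Cref{lem:respsd}(2) closes the argument.

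For (2), I fix $x_1,\ldots,x_k\in\Lambda$ (not necessarily distinct) and let $v=(\epsilon^{x_1},\ldots,\epsilon^{x_k})^\top\in H^k$. Then $v\cdot v^\top$ is positive semidefinite over $H$ with entry $\epsilon^{x_i+x_j}$ in position $(i,j)$, so by $k$-positivity of $L_f$ the matrix with entries $f(x_i+x_j)\epsilon^{x_i+x_j}$ is positive semidefinite as well. With $D=\diag(\epsilon^{x_1},\ldots,\epsilon^{x_k})\in\GL_k(H)$ this matrix equals $D\,(f(x_i+x_j))_{i,j}\,D$, and since congruence by an invertible matrix preserves signature, the Gram-type matrix $(f(x_i+x_j))_{i,j}$ is itself positive semidefinite. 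Thus $f$ is $k$-positive semidefinite.

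For (1), I invoke \Cref{cor:kposcrit}: it suffices to treat vectors $a=(a_1,\ldots,a_k)^\top$ with $a_i\in H_{>0}$, $v(a_1)<\cdots<v(a_k)$, and $\pi(a_i/\epsilon^{v(a_i)})=1$, and to verify that $L_f(a\cdot a^\top)$ is positive semidefinite. Set $x_i=v(a_i)$; these are pairwise distinct. Writing $a_i=\epsilon^{x_i}(1+r_i)$ with $v(r_i)>0$, I expand
$$a_ia_j=\epsilon^{x_i+x_j}+\epsilon^{x_i+x_j}(r_i+r_j+r_ir_j),$$
whose Hahn expansion has coefficient $1$ at $\epsilon^{x_i+x_j}$ and support otherwise strictly above $x_i+x_j$. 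Applying $L_f$ coefficientwise yields
$$L_f(a_ia_j)=\epsilon^{x_i+x_j}\bigl(f(x_i+x_j)+m_{ij}\bigr),\qquad v(m_{ij})>0.$$
Setting $M=(f(x_i+x_j)+m_{ij})_{i,j}$, this is exactly the factorisation $L_f(a\cdot a^\top)=D\,M\,D$, and since $D\in\GL_k(H)$ it suffices to show $M$ is positive semidefinite. Now $M$ has entries in $\fo$ and $\pi(M)=(f(x_i+x_j))_{i,j}$, which is positive definite by $k$-positive definiteness of $f$ applied to the pairwise distinct $x_i$'s. By \Cref{lem:respsd}(2), $M$ itself is positive definite; hence so is $L_f(a\cdot a^\top)$.

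The main obstacle is in part (1): arranging the expansion so that the residue of the rescaled matrix is exactly the Gram-type matrix $(f(x_i+x_j))_{i,j}$. This is precisely the reason for the normalisation $\pi(a_i/\epsilon^{v(a_i)})=1$ built into \Cref{cor:kposcrit}. The asymmetry between (1) and (2) (definite versus semidefinite) is traceable to \Cref{lem:respsd}(2), which lifts positive definiteness from the residue but not mere positive semidefiniteness.
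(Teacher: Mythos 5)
Your proof is correct and follows essentially the same route as the paper: part (2) via testing $L_f$ on the rank-one matrix built from $(\epsilon^{x_1},\dots,\epsilon^{x_k})$, and part (1) via \Cref{cor:kposcrit} together with \Cref{lem:respsd}(2); your congruence by $D=\diag(\epsilon^{x_1},\dots,\epsilon^{x_k})$ is exactly the paper's Hadamard product with the rank-one matrix $b\cdot b^{\top}$, $b=(\epsilon^{-x_1},\dots,\epsilon^{-x_k})^{\top}$, in different notation.
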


\begin{proof}
In order to prove $(2)$ let $v_1,\ldots,v_k\in\Lambda$.
We have to prove that $(f(v_i+v_j))_{i,j}$ is positive semidefinite. 
Let $a=(\epsilon^{v_1},\ldots,\epsilon^{v_k})^{\top}$. By assumption $L_f(a\cdot a^{\top})$ is positive semidefinite. Let $b=(\epsilon^{-v_1},\ldots,\epsilon^{-v_k})^{\top}$ and let $B=b\cdot b^{\top}$ which is positive semidefinite. 
Then the Hadamard product $C$ of $L_f(a\cdot a^{\top})$ with $B$ 
 is positive semidefinite as well
 by the Schur product theorem.
 Hence, \Cref{lem:respsd}(1) implies that $\pi(C)$ is positive semidefinite. 
 One calculates that $\pi(C)$ is the matrix $(f(v_i+v_j))_{1\leq i,j\leq k}$, which proves $(2)$. 

We will prove $(1)$ by applying \Cref{cor:kposcrit}. To that end let $a_1,\ldots,a_k \in H_{>0}$ be such that $v(a_1)<\cdots<v(a_k)$ and $\pi(\frac{a_i}{\epsilon^{v(a_i)}})=1$. We have to show that the matrix $L_f(a\cdot a^{\top})$ is positive semidefinite, where $a=(a_1,\ldots,a_k)^{\top}$. Let $b=(\epsilon^{-v(a_1)},\ldots,\epsilon^{-v(a_k)})^{\top}$ and $B=b\cdot b^{\top}$. Since $B$ is positive semidefinite of rank one with nonzero entries, the matrix $L_f(a\cdot a^{\top})$ is positive semidefinite if and only if its Hadamard product $C$ with $B$ is positive semidefinite. As above one calculates that $\pi(C)$ is the matrix $(f(v(a_i)+v(a_j)))_{1\leq i,j\leq k}$ which is positive definite since $f$ is $k$-positive definite and $v(a_1),\dots,v(a_k)$ are pairwise different. Thus, $C$ is positive definite by \Cref{lem:respsd}(2). Therefore, $L_f(a \cdot a^{\top})$ is positive semidefinite and $(1)$ follows from~\Cref{cor:kposcrit}.
\end{proof}

\begin{rem}\label{rem:groupsos}
 Another point of view on positive semidefinite functions can be taken by looking at the group ring $R[\Lambda]$. The group ring $R[\Lambda]$ can be identified with the subring of $H=R[[\epsilon^{\Lambda}]]$ consisting of series with finite support. Now any function $f\colon\Lambda \to R$ can be extended linearly to an $R$-linear functional $T_f\colon R[\Lambda]\to R$. An element of $R[\Lambda]$ is of the form $a=\sum_{i=1}^k c_i \epsilon^{a_i}$ for $k \in {\mathbb N}$, $(c_1,\dots,c_k)\in R^k$, and $(a_1,\dots,a_k) \in \Lambda^k$. Then we have \begin{align*}
 T_f(a^2) &=T_f \left (\sum_{i,j = 1}^k c_i c_j \epsilon^{a_i + a_j} \right) = \sum_{i,j}^k c_ic_j f(a_i+a_j)\\& = \begin{pmatrix}c_1&\hdots &c_k
 \end{pmatrix}\cdot (f(a_i+a_j))_{1\leq i,j\leq k}\cdot \begin{pmatrix}c_1\\\vdots \\c_k
 \end{pmatrix}
 .    
 \end{align*}
 From this we see
 that $f$ is positive semidefinite (positive definite) if and only if $T_f(a^2)\geq0$ ($T_f(a^2)>0$, respectively) for all $a\in R[\Lambda] \setminus \{0\}$. 
\end{rem}

For the rest of this section we focus on the case $\Lambda=\R$.
Let $\cH(\C)$ be the ring of holomorphic functions on $\C$.
We have a homomorphism of $\R$-algebras
$\Psi \colon \R[\Lambda]\to \cH(\C)$ defined as follows. 
For $a\in\R$, set $\Psi(\epsilon^a)$
to be the holomorphic function $z\mapsto\exp(az)$ and extend linearly to all of $\R[\Lambda]$. 

\begin{lem}\label{lem:identhm}
Let  $g\in\R[\Lambda]$. If $\Psi(g)$ vanishes on $[a,b]$ for $a<b$, then $g=0$ as element of $\R[\Lambda]$. In particular, the homomorphism $\Psi$ is injective.
\end{lem}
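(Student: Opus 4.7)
The plan is to write $g$ explicitly and use the identity theorem for entire functions together with a Vandermonde-type argument. Any $g \in \R[\Lambda]$ can be written as $g = \sum_{i=1}^n c_i \epsilon^{a_i}$ with pairwise distinct exponents $a_1, \dots, a_n \in \R$ and coefficients $c_i \in \R$, so $\Psi(g)$ is the entire function $z \mapsto \sum_{i=1}^n c_i \exp(a_i z)$. Since $[a,b]$ has accumulation points in $\C$, the identity theorem for holomorphic functions forces $\Psi(g) \equiv 0$ on all of $\C$.

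Next I would reduce the statement to showing that distinct exponents yield linearly independent exponentials. From $\sum_{i=1}^n c_i \exp(a_i z) = 0$ for all $z \in \C$, differentiating $k$ times and evaluating at $z = 0$ gives the infinite system
\begin{equation*}
\sum_{i=1}^n c_i a_i^k = 0 \quad \text{for all } k \geq 0.
\end{equation*}
The first $n$ equations form a linear system with coefficient matrix $(a_i^k)_{0 \leq k \leq n-1,\, 1 \leq i \leq n}$, which is a Vandermonde matrix. Since the $a_i$ are pairwise distinct, its determinant is nonzero, hence $c_1 = \cdots = c_n = 0$ and therefore $g = 0$ in $\R[\Lambda]$.

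The "in particular" statement is immediate: if $\Psi(g) = 0$ as a holomorphic function, then in particular $\Psi(g)$ vanishes on some nondegenerate interval $[a,b]$, so $g = 0$ by what was just proved; thus $\Psi$ is injective. There is no real obstacle here, as the whole argument rests on two classical facts (the identity theorem and the non-vanishing of a Vandermonde determinant); the only mild subtlety is to remember that the representation $g = \sum c_i \epsilon^{a_i}$ must be reduced so that the $a_i$ are distinct before invoking Vandermonde.
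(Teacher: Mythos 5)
Your proof is correct and follows the same route as the paper: write $g$ with pairwise distinct exponents, apply the identity theorem to conclude $\Psi(g)\equiv 0$ on all of $\C$, and then deduce $c_1=\cdots=c_n=0$. The only difference is in the last step, where the paper invokes linear independence of characters while you reprove that fact directly by differentiating at $z=0$ and using the nonvanishing of the Vandermonde determinant $\prod_{i<j}(a_j-a_i)$ --- an equally valid and slightly more self-contained justification.
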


\begin{proof}
Let $g=\sum_{i=1}^nc_i\epsilon^{a_i}$ for some $c_i,a_i\in\R$, $i=1,\ldots,n$, with $a_1<\cdots<a_n$.
 The identity theorem from complex analysis implies that $\Psi(g)$ vanishes on all of $\C$. Thus we have $\sum_{i=1}^nc_i\exp(a_iz)=0$ for all $z\in\C$. Linear independence of characters now implies that $c_i=0$ for {$i \in \{1,\ldots,n\}$} because the functions $\R\to\R,\, t\mapsto\exp(a_it)$ for {$i \in \{1,\ldots,n\}$} are pairwise different characters.
\end{proof}

This enables us to construct positive definite functions $\R\to\R$.

\begin{ex}\label{ex:strictpos}
 Let $\Lambda=\R$ and $\mu$ a measure on $\R$ with $\int \exp(ax)d\mu(x)<\infty$ for all $a\in\R$. Then the functional $$T\colon\R[\Lambda]\to\R,\, g\mapsto\int \Psi(g)(x)d\mu(x)$$ clearly satisfies $T(g^2)\geq0$ for all $g\in\R[\Lambda]$. If we choose for instance $\mu$ to be {the pushforward to $\R$ of the Lebesgue measure} on some compact interval $[a,b]$ with $a<b$, then we even have $T(g^2)>0$ for every $g\in\R[\Lambda] \setminus \{0\}$. Indeed, for any $g\in\R[\Lambda]$ with $T(g^2)=0$ the function $\Psi(g)$ must vanish on $[a,b]$,  which implies $g=0$ by \Cref{lem:identhm}. 
 Let $f\colon\R\to\R$ defined by $f(a)=T(\epsilon^a)$ for $a\in\R$. Since $T$ is $\R$-linear, we have $T=T_f$ and thus $f$ is positive definite by \Cref{rem:groupsos}.
\end{ex}

\begin{cor}\label{cor:exposmap}
Let $\Lambda$ be a divisible ordered abelian group whose dimension as a $\Q$-vector space is at most $2^{\aleph_0}$. Further, let $R$ be a real closed field extension of $\R$. Then there exists an $R$-linear map $T \colon R[\Lambda]\to R$ which satisfies $T(g^2)>0$ for every $g \in R[\Lambda] \setminus \{0\}$.
\end{cor}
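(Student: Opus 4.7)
The plan is to reduce to the case $\Lambda=\R$ handled in \Cref{ex:strictpos} by embedding $\Lambda$ into $\R$ as an abelian group (the order on $\Lambda$ will play no role), pulling back the positive definite function constructed there, and then extending the resulting functional $R$-linearly to the whole group ring $R[\Lambda]$.

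Concretely, since $\Lambda$ is divisible it is canonically a $\Q$-vector space, and by hypothesis $\dim_\Q \Lambda \leq 2^{\aleph_0} = \dim_\Q \R$. I would first fix an injective $\Q$-linear map $\iota\colon \Lambda \hookrightarrow \R$ (no compatibility with orders is required) and take $f\colon\R\to\R$ to be the positive definite function produced by \Cref{ex:strictpos}, for instance $f(a) = \int_0^1 \exp(ax)\,dx$. Setting $\tilde f := f\circ \iota\colon \Lambda\to \R\subseteq R$, the injectivity of $\iota$ ensures that pairwise distinct $a_1,\ldots,a_k\in\Lambda$ are sent to pairwise distinct points of $\R$, so
\[
(\tilde f(a_i+a_j))_{1\leq i,j\leq k} \;=\; (f(\iota(a_i)+\iota(a_j)))_{1\leq i,j\leq k}
\]
is positive definite. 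Hence $\tilde f$ is positive definite in the sense of \Cref{def:totpos}, and I would define $T\colon R[\Lambda]\to R$ as the unique $R$-linear map with $T(\epsilon^a)=\tilde f(a)$ for all $a\in\Lambda$.

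For the strict positivity I would invoke the computation of \Cref{rem:groupsos}: writing $g\in R[\Lambda]\setminus\{0\}$ as $g=\sum_{i=1}^n c_i \epsilon^{a_i}$ with pairwise distinct $a_i\in\Lambda$ and $c:=(c_1,\ldots,c_n)^\top \in R^n\setminus\{0\}$, one obtains $T(g^2)=c^\top M c$ with $M=(\tilde f(a_i+a_j))_{i,j}\in\R^{n\times n}$ positive definite. The only small obstacle is that positive definiteness of $\tilde f$ in \Cref{def:totpos} is a priori a statement about real coefficient vectors, whereas the assertion requires $c^\top M c>0$ for $c\in R^n\setminus\{0\}$. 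A Cholesky factorization $M=NN^\top$ with $N\in\GL_n(\R)$ handles this immediately: $T(g^2)=(N^\top c)^\top (N^\top c)$ is a nonzero sum of squares in $R$ because $N$ is invertible and $c\neq 0$. This passage from positive definiteness over $\R$ to strict positivity over $R$ is the only step that requires any thought, and as the Cholesky argument shows it is essentially automatic.
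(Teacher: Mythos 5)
Your proof is correct and follows essentially the same route as the paper: embed $\Lambda$ into $\R$ as a $\Q$-vector space, pull back the positive definite function from \Cref{ex:strictpos}, and apply the computation of \Cref{rem:groupsos}. The only difference is that you make explicit (via Cholesky) the passage from positive definiteness of the real matrix $(\tilde f(a_i+a_j))_{i,j}$ over $\R$ to strict positivity of $c^\top M c$ for coefficient vectors $c$ over $R$, a point the paper leaves implicit.
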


\begin{proof}
By \Cref{ex:strictpos} there exists a  positive definite map $f\colon\R\to\R$. 
 By our assumptions on $\Lambda$ there is an injective group homomorphism $\iota_1\colon\Lambda \to\R$. Let $\iota_2 \colon \R\to R$ be the inclusion map. Then the map $\bar{f}=\iota_2\circ f\circ\iota_1\colon\Lambda \to R$ is also positive definite. 
 By \Cref{rem:groupsos} the map $T_{\bar{f}}\colon R[\Lambda]\to R$ has the desired properties.
\end{proof}

\section{The group ring $R[\Q^n]$}\label{sect:groupring}
From now on let $R$ be a real closed field (not necessarily an extension of $\R$). {We fix a total order on $\Q^n$ that is translation-invariant, for instance the lexicographical ordering.} As in \Cref{rem:groupsos} we consider the group ring $R[\Q^n]$ as the subring of $R[[\epsilon^{\Q^n}]]$ of series with finite support. For $a=(a_1,\ldots,a_n)\in\Q^n$ we write $$\epsilon^{a}=\epsilon_1^{a_1}\cdots\epsilon_n^{a_n}.$$ By plugging in positive elements from $R$ for $\epsilon_i$ we can regard elements from $R[\Q^n]$ as functions on the positive orthant $R_{>0}^n$. 
Clearly, every element of $R[\Q^n]$ that can be written as a sum of squares of elements of $R[\Q^n]$ is nonnegative on $R_{>0}^n$. We will see in \Cref{cor:notpsdsos} that the converse to this is not true. We will consider the polynomial ring $R[\underline{x}]=R[x_1,\ldots,x_n]$ as subring of $R[\Q^n]$ by replacing $x_i$ by $\epsilon_i$.

\begin{rem}
 Evaluating elements $f\in R[\Q^n]$ at negative numbers is in general not well defined since the exponents can have even denominator. Similarly, a polynomial can be a sum of squares of elements from the group ring $R[\Q^n]$ without being nonnegative on all of $R^n$. {For example, $\epsilon \in R[\epsilon]$ takes negative values on $R$, but equals  $(\epsilon^{\frac{1}{2}})^2$.}
\end{rem}

\begin{rem}\label{rem:sos}
 A polynomial $p\in R[\underline{x}]$ is a sum of squares of elements in $R[\Q^n]$ if and only if $p(x_1^m,\dots,x_n^m)$ is a sum of squares of polynomials {from $R[\underline{x}]$} for some $m\in\Z_{>0}$. Indeed, negative exponents cannot appear in the sum of squares decomposition since lowest degree terms cannot cancel out.
\end{rem}

\begin{ex}
 Consider the Motzkin polynomial $$M=x_3^6-3x_1^2x_2^2x_3^2+x_1^2x_2^4+x_1^4x_2^2\in R[x_1,x_2,x_3].$$ It follows from the inequality of arithmetic and geometric means that $M$ is nonnegative on $R^3$
{(since $\frac{1}{3} x_1^4x_2^2 + \frac{1}{3} x_1^2 x_2^4 + \frac{1}{3} x_3^6 \geq x_1^{4/3}x_2^{2/3}x_1^{2/3}x_2^{4/3}x_3^{6/3}$)}
 but it is not a sum of squares of polynomials from $R[\underline{x}]$ \cite{motzkin}. However, $M(x_1^2,x_2^2,x_3^2)$ is a sum of squares of polynomials \cite[Example~5.3]{MR985241}:
 \[M(x_1^2,x_2^2,x_3^2)=
 2\cdot(x_1^3x_2^3-x_1x_2x_3^4)^2+(x_1^4x_2^2-x_1^2x_2^4)^2+(x_3^6-x_1^2x_2^2x_3^2)^2 .
 \]
 This shows  that $M$ is a sum of squares of elements of $R[\Q^3]$.
\end{ex}

\begin{ex}\label{cor:notpsdsos}
 Consider the \emph{Horn polynomial} $$h=(x_1+x_2+x_3+x_4+x_5)^2-4(x_1x_2+x_2x_3+x_3x_4+x_4x_5+x_5x_1).$$It was proven in \cite[p.~335]{hornex} that $h$ is nonnegative on the positive orthant. However, for any integer $k>0$ the polynomial $h(x_1^k,\ldots,x_5^k)$ is not a sum of squares of elements of $R[{\underline x}]$. This was shown in \cite[\S4]{hornrez} by adapting the proof from \cite{hornex} for the case $k=2$. 
 This shows  that $h$ is not a sum of squares of elements of $R[\Q^5]$.
 
 For the sake of completeness, we repeat the argument here. Assume $h$ is a sum of squares in $R[\Q^5]$. Then for some integer $d>0$ the polynomial $$H_d=h(x_1^{{d}},\ldots,x_5^{{d}})=g_1^2+ \dots +g_m^2$$ is a sum of squares of homogeneous polynomials $g_j$ of degree $d$ (since highest and lowest degree terms cannot cancel out). Restricting $H_d$ to $x_{i}=x_{i+1}=0$ we obtain $$(x_{i+2}^d-x_{i+3}^d+x_{i+4}^d)^2$$ {for $i \in \{ 1,\dots,5\}$} where the indices are taken modulo $5$. Thus, {for $j \in \{1,\dots,m\}$} the restriction of $g_j$ to $x_{i}=x_{i+1}=0$ must be a scalar multiple of $x_{i+2}^d-x_{i+3}^d+x_{i+4}^d$. 
 {Indeed, if $$(x_{i+2}^d-x_{i+3}^d+x_{i+4}^d)^2=g_1'^2+\dots+g_m'^2$$for $g_j'\in\R[x_{i+2},x_{i+3},x_{i+4}]$, then each $g_j'$ must vanish on the real zero set of $x_{i+2}^d-x_{i+3}^d+x_{i+4}^d$. The latter is Zariski dense in the complex zero set which implies by Hilbert's Nullstellensatz that $g_j'$ is divisible by the irreducible polynomial $x_{i+2}^d-x_{i+3}^d+x_{i+4}^d$.}
 Letting $c_i$ be the coefficient of $x_{i}^d$ in $g_j$, this shows that $c_i=-c_{i+1}$. Since $5$ is an odd number, this actually implies that $c_1=\cdots=c_5=0$. In particular, we have that $g_j(1,0,\ldots,0)=0$ {for $j \in \{1,\ldots,m\}$}. However, since $H_d(1,0,\ldots,0)\neq0$, this is a contradiction.
\end{ex}

\begin{ex}
 The Horn polynomial considered in \Cref{cor:notpsdsos} is a special case of the following more general construction. Recall that a \emph{stable set} of a graph $G=(V,E)$ is a subset of elements of $V$ that are pairwise not adjacent to each other. The \emph{stability number} {$\alpha(G)$} of $G$ is the largest size of a stable set of $G$. By a theorem of Motzkin--Straus \cite{motzkinstraus} the quadratic form associated to the symmetric matrix $$Q(G)=\alpha(G)(I+A)-J$$ is nonnegative on the positive orthant \cite[Example~3.163]{parrilo}, i.e., the matrix $Q(G)$ is \emph{copositive}. Here, $A$ is the adjacency matrix of $G$ and $J$ is the all-ones matrix. The Horn polynomial corresponds to the matrix $Q(C_5)$ where $C_5$ is the five-cycle. The argument from \Cref{cor:notpsdsos} can in fact be applied to any odd cycle of size at least five.
\end{ex}


We also note that nonnegative elements in $\R[\Q^n]$ cannot be distinguished from sums of squares by an {$\R$-}linear functional $\R[\Q^n]\to\R$.

\begin{prop}
 Let $T\colon\R[\Q^n]\to\R$ be an $\R$-linear functional with $T(a^2)\geq0$ for all $a\in\R[\Q^n]$ and let $p\in\R[\Q^n]$ be nonnegative on $R_{>0}^n$. Then $T(p)\geq0$.
\end{prop}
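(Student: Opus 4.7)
The plan is to represent $T$ as integration against a positive measure on the positive orthant: namely, to produce a positive measure $\mu$ on $\R_{>0}^n$ such that $T(f)=\int_{\R_{>0}^n}f(x)\,d\mu(x)$ for every $f\in\R[\Q^n]$. Given such a representation, the conclusion follows at once, since $p$ has real coefficients and is nonnegative on $R_{>0}^n$ (hence, by the Tarski principle, nonnegative on $\R_{>0}^n$), so $T(p)=\int p(x)\,d\mu(x)\geq 0$.

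By \Cref{rem:groupsos}, the hypothesis on $T$ is equivalent to the statement that the function $t\colon \Q^n\to\R$ defined by $t(a):=T(\epsilon^a)$ is positive semidefinite in the semigroup sense. To extract the measure I would apply the GNS construction to the positive semidefinite bilinear form $\langle f,g\rangle:=T(fg)$: after quotienting by its radical and completing, one obtains a real Hilbert space $H$ with a cyclic vector $\Omega$ satisfying $T(f)=\langle f\cdot\Omega,\Omega\rangle$. Multiplication by $\epsilon^a$ then defines a densely defined symmetric operator $U_a$ on $H$, which is positive because $\epsilon^a=(\epsilon^{a/2})^2$, and invertible with inverse $U_{-a}$.

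The crucial algebraic point is that divisibility of $\Q^n$ forces every real group character $\chi\colon \Q^n\to\R\setminus\{0\}$ to lie in $\R_{>0}$, since $\chi(a)=\chi(a/2)^2>0$. As computed in the proof of \Cref{cor:exposmap}, such characters are precisely the evaluations $\chi_x(a)=x^a$ at points $x\in\R_{>0}^n$. Applying the joint spectral theorem to the commuting family of positive operators $\{U_a\}_{a\in\Q^n}$ should then yield a positive Radon measure $\mu$ on $\R_{>0}^n$ with $t(a)=\int_{\R_{>0}^n}x^a\,d\mu(x)$, and by linearity $T(f)=\int_{\R_{>0}^n} f(x)\,d\mu(x)$, as desired.

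The main obstacle is rigorously carrying out the joint spectral decomposition, since each $U_a$ is typically unbounded. A convenient workaround exploits the SOS identity $\epsilon^a+\epsilon^{-a}-2=(\epsilon^{a/2}-\epsilon^{-a/2})^2$ to conclude $U_a+U_{-a}\geq 2$, so that the resolvents $B_a:=(U_a+U_{-a}+2)^{-1}$ form a bounded commuting family of self-adjoint operators on $H$, to which the ordinary spectral theorem applies; the measure $\mu$ is then recovered by transferring the joint spectral measure back to $\R_{>0}^n$. A purely algebraic alternative is to apply a multidimensional Stieltjes moment theorem to the restriction of $t$ to each subgroup $\tfrac1M\Z^n$ (noting that $(t(\alpha+\beta+c))_{\alpha,\beta}$ is PSD for every $c\in\Q^n$, which is a stronger condition than the usual shifted-Hankel hypotheses), and then to patch the resulting measures using divisibility of $\Q^n$.
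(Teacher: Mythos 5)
Your strategy is the right one and it is, in substance, exactly the content of the results the paper invokes: the paper's proof is a two-line citation to Berg--Christensen--Ressel \cite{ressel}, namely that $(\Q,+)$ is a \emph{perfect} semigroup (every positive semidefinite function is a moment function, Proposition~5.10 there) and that products of perfect semigroups are perfect (Theorem~5.4), together with the observation that the characters of the divisible group $\Q^n$ are precisely the evaluations at points of the positive orthant. So where the paper cites the integral representation $T(f)=\int f\,d\mu$, you propose to reprove it from scratch via GNS and spectral theory; your key observations (positivity of the multiplication operators $U_a$ because $\epsilon^a=(\epsilon^{a/2})^2$, positivity of all real characters by divisibility, the lower bound $U_a+U_{-a}\geq 2$ giving bounded commuting resolvents) are exactly the ingredients of the standard proof of perfectness. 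What the citation buys is precisely the step you flag as the main obstacle: one must show the relevant symmetric operators are essentially self-adjoint (which does follow here, since e.g.\ $U_a+1$ has dense range because $(\epsilon^a+1)^2-1$ is a sum of squares), that the closed extensions strongly commute, and that the multiplicativity $U_aU_b=U_{a+b}$ on the common core forces the joint spectral measure of $\Omega$ to be supported on characters; none of this is automatic for unbounded commuting symmetric operators, and it is the genuine content of \cite{ressel}. Be warned that your ``purely algebraic alternative'' is shakier than the operator-theoretic route: for $n\geq 2$ the multidimensional Stieltjes moment problem is \emph{not} solvable under the usual shifted-Hankel positivity hypotheses (this failure is exactly dual to the existence of nonnegative non-SOS polynomials), and the restriction of $t$ to a lattice $\tfrac1M\Z^n$ only becomes a moment sequence because it extends positive-definitely to all of $\Q^n$ — so the patching argument would have to feed the full divisibility back in, at which point one is reproving perfectness anyway.
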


\begin{proof}
In the case $n=1$ the statement of \cite[Proposition~5.10, page 211]{ressel} says that for every such $T$ there is a measure $\mu$ on $[0,\infty)$ such that $T(g)=\int g d\mu$. This shows in particular that $T(p)\geq0$.
 The case $n>1$ then follows from \cite[Theorem~5.4, page 204]{ressel}.
\end{proof}

However, if we replace the target field by some real closed extension field, then separation by a linear functional is always possible. The following theorem follows from the results of \cite{rcsep}. We will give self-contained proofs in \Cref{sec:appendi}.

\begin{thm}\label{thm:seplin}
 Let $p\in R[\Q^n]$ not be a sum of squares of elements of $R[\Q^n]$. Then there exists a real closed field extension $R'\supset R$ and an $R'$-linear functional $T\colon R'[\Q^n]\to R'$ such that for every $a\in R'[\Q^n] \setminus \{0\}$ we have 
 $T(a^2)>0$  and $T(p)<0$.
\end{thm}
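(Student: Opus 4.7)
The plan is to write the desired $T$ as $T_0 + \varepsilon T_1$, where $T_0$ is a (possibly merely semidefinite) functional that separates $p$ from the cone of sums of squares and $T_1$ is a strictly positive definite functional on $R'[\Q^n]$ furnished by \Cref{cor:exposmap}. Before doing so, I reduce to the case $R \supseteq \R$: since real closed fields have the amalgamation property, $R$ embeds into some real closed $R^* \supseteq \R$, and the hypothesis that $p$ is not a sum of squares persists. Concretely, fix $d \in \Z_{>0}$ with $\mathrm{supp}(p) \subseteq \frac{1}{d}\Z^n$; the usual Newton-polytope argument (squares cannot cancel at vertices of the Newton polytope) shows that in any decomposition $p = \sum_i g_i^2$ over $R^*$ one has $\mathrm{supp}(g_i) \subseteq \frac{1}{2d}\Z^n \cap \frac{1}{2}N(p)$, a finite set, and Carath\'eodory bounds the number of summands by the dimension of this fixed slice, so ``$p$ is a sum of squares'' is a first-order existential statement over $R$, preserved and reflected by Tarski--Seidenberg.

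The main step, and in my view the main obstacle, is to produce an $R'$-linear functional $T_0 \colon R'[\Q^n] \to R'$ (for some real closed $R' \supseteq R$) with $T_0(g^2) \geq 0$ for every $g$ and $T_0(p) < 0$. This is the separation theorem for the sum-of-squares cone cited from~\cite{rcsep} and reproved in \Cref{sec:appendi}. Over a non-Archimedean real closed field Hahn--Banach is not directly available; the natural strategy is to separate $p$ from the finite-dimensional cone $\Sigma^2 \cap W$, where $W$ is the slice of elements with support in $\frac{1}{d}\Z^n \cap N(p)$ (by the Newton-polytope argument above, this equals the cone of sums of squares of elements with support in $\frac{1}{2d}\Z^n \cap \frac{1}{2}N(p)$), and then extend to a functional on all of $R'[\Q^n]$ that remains positive on every square. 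This positive extension, a truncated-moment-problem-type step, is what typically forces passage to a larger real closed field $R'$.

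Finally, \Cref{cor:exposmap} applied to $\Lambda = \Q^n$ (which has countable $\Q$-dimension and hence embeds into $\R$) furnishes an $R'$-linear $T_1 \colon R'[\Q^n] \to R'$ with $T_1(g^2) > 0$ for every nonzero $g$. I then set $T := T_0 + \varepsilon T_1$ with $\varepsilon \in R'_{>0}$ chosen so that $T(p) < 0$: if $T_1(p) \leq 0$ then any positive $\varepsilon$ suffices, and otherwise I take $\varepsilon := -T_0(p)/(2 T_1(p))$, which is positive in $R'$ since $T_0(p) < 0$. Then for every nonzero $a \in R'[\Q^n]$ we have $T(a^2) = T_0(a^2) + \varepsilon T_1(a^2) > 0$, and $T(p) = T_0(p) + \varepsilon T_1(p) \leq \tfrac{1}{2} T_0(p) < 0$, completing the construction.
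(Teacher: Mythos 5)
Your proposal is correct and follows essentially the same route as the paper: invoke the weak (semidefinite) separation result from~\cite{rcsep} (Corollary~\ref{cor:mainsep} in the appendix) to get $T_0$ with $T_0(g^2)\geq 0$ and $T_0(p)<0$, then perturb by a small positive multiple of the strictly positive functional from Corollary~\ref{cor:exposmap}, exactly as in the paper's $T=\lambda T_0+T'$. Your preliminary reduction to $R\supseteq\R$ (needed so that Corollary~\ref{cor:exposmap} applies) is a point the paper handles only implicitly, at the place where the theorem is used, so including it is a reasonable extra care rather than a deviation.
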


\begin{proof}
 By \Cref{cor:mainsep} there exists a real closed field extension $R'\supset R$ and an $R'$-linear functional $T'\colon R'[\Q^n]\to R'$ such that  for every $a\in R'[\Q^n]$ we have $T'(a^2)\geq0$ and $T'(p)<0$. Further, by \Cref{cor:exposmap} there exists an $R'$-linear map $T_0 \colon R'[\Q^n]\to R'$ that satisfies $T_0(a^2)>0$ for all $0\neq a\in R'[\Q^n]$. Let $\lambda\in R'$ be such that $\lambda>0$ and $\lambda |T_0(p)|<|T'(p)|$. Then $T=\lambda T_0+T'$ has the desired properties.
\end{proof}

Now we are ready to formulate and prove our main result.

\begin{Def}
 Let $S\subset\Z_{\geq0}^n$ be a finite set and $R$ a real closed field. We denote by $\Pol(S,R)$ the (finite dimensional) $R$-vector space of all polynomials in $n$ variables and coefficients in $R$ whose support is contained in $S$. We further denote by $\Pol_+(S,R)$ the convex cone of all $p\in\Pol(S,R)$ such that $p(x)\geq0$ for all $x\in R^n$. Finally, we denote by $\Sigma(S,R)$ the convex cone of all $P\in\Pol(S,R)$ that are a sum of squares of polynomials (with coefficients in $R$). If we do not specify the real closed field, then we work over the real numbers, i.e., we write $\Pol(S)=\Pol(S,\R)$, $\Pol_+(S)=\Pol_+(S,\R)$, and $\Sigma(S)=\Sigma(S,\R)$.
\end{Def}

\begin{rem}\label{rem:overz}
 Note that $\Pol_+(S,R)$ is the set of all tuples $(c_s)_{s\in S}\subset R^S$ such that $$\forall x_1 \cdots\forall x_n\colon\,\sum_{s\in S}c_s\cdot\prod_{i=1}^nx_i^{s_i}\geq0$$holds true over $R$. This is a formula in the language of real closed fields without parameters. 
 Such a formula also exists for $\Sigma(S,R)$. {Indeed, let $T_1=\Z_{\geq0}^n\cap\conv(S)$ and $T_2=\Z_{\geq0}^n\cap(\frac{1}{2}\conv(S))$. Every element of $\Sigma(S,R)$ is a sum of squares of elements of $\Pol(T_2,R)$, i.e., lies in the conical hull of the set $$\{q^2\mid q\in\Pol(T_2,R)\}\subset\Pol(T_1,R).$$
 Then by Carath\'eodory's theorem for the conical hull every such element can be written as a sum of $|T_1|=\dim(\Pol(T_1,R))$ many squares.
 Therefore, we have that $\Sigma(S,R)$ is the set of all $p\in\Pol(S,R)$ for which there exist $c_{i,t}\in R$, for $i \in \{1,\dots,|T_1|\}$
 and $t\in T_2$, such that $$p=\sum_{i=1}^{|T_1|}\left(\sum_{t\in T_2} c_{i,t} \prod_{j=1}^nx_j^{t_j}\right)^2.$$} This shows that $\Sigma(S,R)$ is defined by a formula in the language of real closed fields 
 without parameters. 
\end{rem}

\begin{rem}
 For all finite sets $S$ the set $\Sigma(S)$ is a spectrahedral shadow \cite[\S6.3.1, page 263]{MR3050245}.
\end{rem}

\begin{prop}\label{thm:monomialshadows}
 Let $S\subset\Z_{\geq0}^n$ be a finite set. Assume that there exists a real closed field $R$ and $p\in\Pol_+(S,R)$ such that $p$ is not a sum of squares of elements from $R[\Q^n]$. Then $\Pol_+(S)$ is not a spectrahedral shadow.
\end{prop}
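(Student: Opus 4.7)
The plan is to argue by contradiction using the model-theoretic setup of \Cref{sect:k-pos}. I aim to construct a unital, $\R$-linear, completely positive map $L\colon H\to H$ on some real closed extension $H/\R$ that fails to preserve $\Pol_+(S, H)$; by \Cref{lem:sdpdeg} together with the Tarski principle applied to the parameter-free formula from \Cref{rem:overz} cutting out $\Pol_+(S,\cdot)$, this will be enough to rule out any primitive positive definition of $\Pol_+(S)$ over the structure of spectrahedra.

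Before invoking the separation theorem I enlarge $R$ so that it contains $\R$: both the universal hypothesis $p\in\Pol_+(S, R)$ and, for each $m\in\Z_{>0}$, the (negated existential) statement that $p(x_1^m,\ldots,x_n^m)$ is not a sum of squares in $R[\underline{x}]$ are parameter-free conditions on the coefficients of $p$ (\Cref{rem:sos}, \Cref{rem:overz}), and hence transfer by Tarski to any real closed extension of $R$, which can be chosen to contain $\R$. I then apply \Cref{thm:seplin} to obtain a real closed extension $R'\supseteq R$ and an $R'$-linear functional $T\colon R'[\Q^n]\to R'$ with $T(a^2)>0$ for every $0\neq a\in R'[\Q^n]$ and $T(p)<0$; rescaling, I may further assume $T(1)=1$.

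Define $f\colon\Q^n\to R'$ by $f(a):=T(\epsilon^a)$; by \Cref{rem:groupsos}, $f$ is positive definite. Let $H=R'[[\epsilon^{\Q^n}]]$ and consider $L_f\colon H\to H$. It is unital ($f(0)=T(1)=1$), $\R$-linear (even $R'$-linear), and completely positive by \Cref{thm:totimpcomp}(1); hence by \Cref{lem:sdpdeg} it preserves every spectrahedron over $\R$, and therefore every set primitive positively definable from such. For the contradiction I take the twisted polynomial
\[
q(\underline{x}) := p(\epsilon^{e_1}x_1,\ldots,\epsilon^{e_n}x_n) = \sum_{s\in S} c_s\,\epsilon^s\, x^s,
\]
where $e_i$ is the $i$th standard basis vector of $\Q^n$. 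Since non-negativity of $p$ transfers from $R^n$ to $H^n$ by Tarski, $q\in\Pol_+(S, H)$. By $R'$-linearity of $L_f$, the coefficient $c_s\epsilon^s$ of $q$ maps to $c_s f(s)\epsilon^s$, so $L_f(q)=\sum_{s\in S} c_s f(s)\,\epsilon^s\, x^s$; evaluating at $x_i=\epsilon^{-e_i}$ the $\epsilon$-factors cancel and give $\sum_s c_s f(s) = T(p) < 0$. Thus $L_f(q)\notin\Pol_+(S, H)$, contradicting the assumption that $L_f$ preserves the primitive positive definition of $\Pol_+(S, H)$.

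The delicate point is the choice of $q$: since $L_f$ restricts to the identity on $R'$, applying it directly to $p$ recovers only $p$ itself and reveals nothing. Twisting the $s$-th coefficient by $\epsilon^s$ forces $L_f$ to act through the value $f(s)$, and the paired evaluation at $x_i=\epsilon^{-e_i}$ is precisely the dual operation that reassembles the $f(s)$ into the separating quantity $T(p)$.
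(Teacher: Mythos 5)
Your proof is correct and follows essentially the same route as the paper's: separate $p$ from the sums of squares via \Cref{thm:seplin}, build the completely positive map $L_f$ on the Hahn series field via \Cref{thm:totimpcomp}, and detect the failure of preservation by twisting $p$ with $\epsilon^{e_i}$ and evaluating at $\epsilon^{-e_i}$. The only difference is that you spell out the Tarski-transfer justification for the reduction to $R\supseteq\R$, which the paper leaves as a one-line "without loss of generality."
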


\begin{proof}
 Without loss of generality we may assume that $R$ contains $\R$.
 By \Cref{thm:seplin} there exists a real closed field extension $R'\supset R$ and an $R'$-linear functional $T\colon R'[\Q^n]\to R'$ such that $T(a^2)>0$ for all $a\in R'[\Q^n] \setminus \{0\}$ and $T(p)<0$. {Let $f\colon\Q^n\to R'$ defined by $f(w)=T(\epsilon_1^{w_1}\cdots\epsilon_n^{w_n})\in R'$ for $w\in\Q^n$. By the $R'$-linearity of $T$ we then have $T = T_f$ as in \Cref{rem:groupsos}}. As $T(a^2) > 0$ we have that $f$ is positive definite (see \Cref{rem:groupsos}). After replacing $T$ by $\frac{1}{T(1)}\cdot T$, we can further assume that $f(0)=1$. Therefore, we have that $L_f\colon R'[[\epsilon^{\Q^n}]]\to R'[[\epsilon^{\Q^n}]]$ is completely positive by \Cref{thm:totimpcomp}, $\R$-linear by construction, and unital because $f(0)=1$. The polynomial $q=p(\epsilon_1x_1,\ldots,\epsilon_nx_n)$ with coefficients in $R[[\epsilon^{\Q^n}]]$
 is nonnegative on $R'[[\epsilon^{\Q^n}]]^n$. By construction we have $$L_f(q)(\epsilon_1^{-1},\ldots,\epsilon_n^{-1})=T_f(p)<0.$$ Thus, $L_f$ preserves all formulas in $\Gamma$ by \Cref{lem:sdpdeg} but not the formula defining $\Pol_+(S,R'[[\epsilon^{\Q^n}]])$.
  This shows that $\Pol_+(S)$ is not a spectrahedral shadow.
\end{proof}

A first-order formula is called \emph{positive} 
if it is built from atomic formulas by existential and universal quantification, conjunction, and disjunction.
That is, only negation from first-order logic is forbidden.

\begin{prop}\label{prop:pos-logic}
{Let $S \subset {\mathbb Z}^n_{\geq 0}$ and $p \in \Pol_+(S,R)$ be as in \Cref{thm:monomialshadows}. Then
$\Pol_+(S)$ does not have a positive definition over the relational structure whose relations consist of all spectrahedra and the inequality relation $\neq$.}  
\end{prop}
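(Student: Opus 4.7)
The plan is to adapt the argument of \Cref{thm:monomialshadows} by observing that the map $L_f$ constructed there is in fact bijective, and hence a surjective endomorphism of the expanded structure that also has $\neq$ as a relation. Combined with the easy direction of Lyndon's preservation theorem (Corollary~10.3.5 in~\cite{HodgesLong}) and the Tarski principle, this will yield the claim. Assume for contradiction that $\Pol_+(S)$ has a positive definition $\phi$ from spectrahedra and $\neq$. By the Tarski principle, the same formula $\phi$ defines $\Pol_+(S, H)$ over the corresponding structure on $H := R'[[\epsilon^{\Q^n}]]$, where $R'$ is the real closed field extension provided by \Cref{thm:monomialshadows} applied to the hypothesized $p$.

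Take the same map $L_f \colon H \to H$ constructed in the proof of \Cref{thm:monomialshadows}. It is unital, $\R$-linear, and completely positive. Being in particular unital and $2$-positive, \Cref{lem:lfbij} applies, so $L_f$ is bijective. Injectivity implies that $L_f$ preserves $\neq$, while \Cref{lem:sdpdeg} ensures that $L_f$ preserves every spectrahedron. Hence $L_f$ is a surjective endomorphism of the relational structure on $H$ whose relations consist of all spectrahedra together with $\neq$.

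By the easy direction of Lyndon's preservation theorem, every positive formula over this signature is preserved by surjective homomorphisms; in particular $L_f$ must preserve $\phi$, and hence $\Pol_+(S, H)$. But the proof of \Cref{thm:monomialshadows} already exhibits an element of $\Pol_+(S, H)$---the coefficient tuple of $q = p(\epsilon_1 x_1,\ldots,\epsilon_n x_n)$---whose image under $L_f$ evaluates to the negative quantity $T_f(p)$ at $(\epsilon_1^{-1},\ldots,\epsilon_n^{-1})$ and therefore fails to be nonnegative on $H^n$. This contradicts the preservation of $\Pol_+(S, H)$ and completes the proof. The only substantive addition beyond \Cref{thm:monomialshadows} is the bijectivity of $L_f$, which is already contained in \Cref{lem:lfbij}; everything else amounts to repackaging the preservation properties verified there so as to make $L_f$ a surjective endomorphism of the enlarged structure with $\neq$.
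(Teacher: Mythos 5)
Your proof is correct and follows essentially the same route as the paper: both rely on \Cref{lem:lfbij} to get bijectivity of $L_f$, invoke the easy direction of Lyndon's preservation theorem together with the Tarski principle, and derive the contradiction from the fact that $L_f$ fails to preserve $\Pol_+(S,H)$ as shown in the proof of \Cref{thm:monomialshadows}. Your write-up merely spells out in more detail the steps that the paper's proof states in compressed form.
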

\begin{proof}
Note that by \Cref{lem:lfbij}
the map $L_f \colon R'[[\epsilon^{\Q^n}]]\to R'[[\epsilon^{\Q^n}]]$ constructed in the proof of \Cref{thm:monomialshadows} 
is surjective. It follows from the easy direction of the Lyndon preservation theorem for positive first-order logic (see, e.g., Corollary 10.3.5 in~\cite{HodgesLong}) and the Tarski principle for real-closed fields (similarly as outlined for existential positive logic in the introduction) that
the convex semi-algebraic set $\Pol_+(S)$ does not have a definition by a positive formula over spectrahedra. 
In fact, the map $L_f$ is also injective and hence preserves $\neq$, and so $\Pol_+(S)$ is inexpressible even if we additionally allow the use of $\neq$.
\end{proof}

\begin{rem} 
If full first-order logic is permitted, then the graph of multiplication can already be defined from fairly restricted sets of relations over the reals~\cite{Peterzil,MarkerPeterzilPillay} (and in combination with the graph of addition we then obtain all semialgebraic relations). In particular, it is easy to see that all semialgebraic relations can be defined with first-order formulas and parameters from the graph of addition and the spectrahedron defined by $y \geq x^2$.
\end{rem}

Let $R$ be a real closed field, $d>0$ an integer, $S\subset\Z_{\geq0}^n$ a finite set and $s\in\{\pm1\}^n$. 
We consider the following isomorphisms of $R$-vector spaces:
\begin{eqnarray*}
 \psi_d\colon& \Pol(S,R)\to\Pol(d\cdot S,R),& p(x_1,\ldots,x_n)\mapsto p(x_1^d,\ldots,x_n^d),\\
 \varphi_s\colon& \Pol(S,R)\to\Pol(S,R),& p(x_1,\ldots,x_n)\mapsto p(s_1x_1,\ldots,s_n x_n).
\end{eqnarray*}
Further, we define $$\Sigma_d(S,R)=\bigcap_{s\in\{\pm1\}^n}\varphi_s(\psi_d^{-1}(\Sigma(d\cdot S,R))).$$
Again we write $\Sigma_d(S)=\Sigma_d(S,\R)$.
\begin{lem}\label{lem:props}
 The following hold true for all integers $d>0$:
 \begin{enumerate}
  \item $\Sigma_d(S,R)$ can be defined by a formula in the language of real closed fields without parameters. 
  \item $\Sigma_d(S,R)\subseteq \Sigma_{i\cdot d}(S,R)$ for all integers $i>0$.
  \item $\Sigma_d(S,R)\subseteq\Pol_+(S,R)$.
  \item $\Sigma_d(S)$ is a spectrahedral shadow.
 \end{enumerate}
\end{lem}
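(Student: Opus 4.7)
The plan is to verify the four parts in order; each is essentially a direct unwinding of definitions, so the proof should be short.

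For part (1), I would invoke \Cref{rem:overz}, which gives a parameter-free formula in the language of real closed fields defining $\Sigma(dS,R)$. Since $\psi_d$ and $\varphi_s$ are linear isomorphisms with integer matrix entries, the condition $\psi_d(\varphi_s(p))\in\Sigma(dS,R)$ rewrites as a parameter-free formula in the coefficients of $p$; taking the finite conjunction over the $2^n$ sign patterns then defines $\Sigma_d(S,R)$. Part (2) follows by substituting $x_k\mapsto x_k^i$ into any sum-of-squares representation of $p(s_1x_1^d,\ldots,s_nx_n^d)$: squares go to squares, and the resulting identity is a sum-of-squares representation of $p(s_1x_1^{id},\ldots,s_nx_n^{id})$, valid for every sign pattern $s$, so $p\in\Sigma_{id}(S,R)$.

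For part (3), I would take $p\in\Sigma_d(S,R)$ and an arbitrary point $x\in R^n$, write each $x_j=s_j|x_j|$ with $s_j\in\{\pm1\}$ (setting $s_j:=1$ when $x_j=0$), and use that $R$ is real closed to choose $y_j:=|x_j|^{1/d}\in R$. Then $p(x)=p(s_1y_1^d,\ldots,s_ny_n^d)$, and by assumption the polynomial $p(s_1x_1^d,\ldots,s_nx_n^d)\in R[\underline{x}]$ is a sum of squares; evaluating that representation at $y=(y_1,\ldots,y_n)$ shows $p(x)\geq0$. The design of $\Sigma_d(S,R)$ via intersection over $\{\pm1\}^n$ is exactly what enables this reduction from positivity on $R^n$ to positivity on $R_{\geq0}^n$, which is the only mildly interesting step in the whole lemma. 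For part (4), I would combine the fact cited just before \Cref{thm:monomialshadows}, that $\Sigma(dS)$ is a spectrahedral shadow, with three standard closure properties of the class of spectrahedral shadows: closure under linear preimages (applied to $\psi_d$), under linear images (applied to $\varphi_s$), and under finite intersection (over the sign patterns $s\in\{\pm1\}^n$).
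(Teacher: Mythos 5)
Your proof is correct and follows essentially the same route as the paper's: part (1) via \Cref{rem:overz} and the definability of $\psi_d$ and $\varphi_s$, part (2) by substituting $x_k\mapsto x_k^i$ into a sum-of-squares representation, part (3) by extracting $d$-th roots of $|x_j|$ in the real closed field and evaluating, and part (4) by closure of spectrahedral shadows under (invertible) linear maps and finite intersections. The only cosmetic difference is that you write the membership condition as $\psi_d(\varphi_s(p))\in\Sigma(d\cdot S,R)$ where the definition literally gives $\psi_d(\varphi_s^{-1}(p))$, but since $\varphi_s^{-1}=\varphi_s$ this is harmless.
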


\begin{proof}
 Part $(1)$ follows from \Cref{rem:overz} and the fact that $\psi_d$ and $\varphi_s$ {are first-order definable without parameters in the language of real closed fields}. If a polynomial $p$ is a sum of squares, then $p(x_1^i,\ldots,x_n^i)$ is also a sum of squares. This shows $(2)$. Let $p\in\Sigma_d(S,R)$ and $x_1,\ldots,x_n\in R$. To show $(3)$, we have to show that $p(x_1,\ldots,x_n)\geq0$. For $i \in \{1,\dots,n\}$, let $s_i\in\{\pm1\}$ be such that $s_i x_i\geq0$. Then we let $y_i=\sqrt[d]{s_i x_i}$. We have $q=\psi_d(\varphi_{s}^{{-1}}(p))\in \Sigma(d\cdot S,R)\subseteq\Pol_+(d\cdot S,R)$. Therefore, $$0\leq q(y_1,\ldots,y_n)=(\varphi_{s}^{{-1}}(p))(s_1x_1,\ldots,s_nx_n)=p(x_1,\ldots,x_n).$$This shows $(3)$. We finally have that $\Sigma_d(S)$ is a spectrahedral shadow as the intersection of finitely many spectrahedral shadows.
\end{proof}

\begin{thm}\label{thm:main}
 Let $S\subset\Z_{\geq0}^n$ be a finite set. The following are equivalent:
 \begin{enumerate}
  \item $\Pol_+(S)$ is a spectrahedral shadow.
  \item For every real closed field $R$ and every polynomial $p\in\Pol_+(S,R)$, there exists $d>0$ such that $p(x_1^d,\ldots,x_n^d)$ is a sum of squares.
  \item There exists $d>0$ such that for every real closed field $R$ and every polynomial $p\in\Pol_+(S,R)$ the polynomial $p(x_1^d,\ldots,x_n^d)$ is a sum of squares.
  \item There exists $d>0$ such that for every polynomial $p\in\Pol_+(S)$ the polynomial $p(x_1^d,\ldots,x_n^d)$ is a sum of squares.
  \item $\Pol_+(S)$ has a positive definition over the relational structure whose relations consist of all spectrahedra and the inequality relation $\neq$.
 \end{enumerate}
\end{thm}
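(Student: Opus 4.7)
The plan is to establish the cycle $(1) \Rightarrow (5) \Rightarrow (2) \Rightarrow (3) \Leftrightarrow (4) \Rightarrow (1)$. The implication $(1) \Rightarrow (5)$ is immediate since every primitive positive formula is a positive formula, and $(3) \Rightarrow (4)$ is the special case $R = \R$. For $(5) \Rightarrow (2)$, contrapose: if $(2)$ fails, there exist a real closed field $R$ and $p \in \Pol_+(S, R)$ with $p(x_1^d, \ldots, x_n^d)$ not a sum of squares for any $d > 0$; by \Cref{rem:sos}, $p$ is then not a sum of squares in $R[\Q^n]$, and \Cref{prop:pos-logic} rules out $(5)$.

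For $(3) \Rightarrow (1)$, fix a witness $d$. For any sign pattern $s \in \{\pm 1\}^n$ and any $p \in \Pol_+(S)$, the polynomial $\varphi_s(p)$ again lies in $\Pol_+(S)$; applying $(3)$ to $\varphi_s(p)$ and varying $s$ yields $\Pol_+(S) \subseteq \Sigma_d(S)$, which combined with \Cref{lem:props}(3) gives $\Pol_+(S) = \Sigma_d(S)$. By \Cref{lem:props}(4) the latter is a spectrahedral shadow. The equivalence $(3) \Leftrightarrow (4)$ follows from the Tarski principle: by \Cref{rem:overz} and \Cref{lem:props}(1) the implication ``$p \in \Pol_+(S, R) \Rightarrow p(x_1^d, \ldots, x_n^d) \in \Sigma(d \cdot S, R)$'' is expressible by a first-order sentence without parameters in the language of real closed fields, so it holds over $\R$ if and only if it holds over every real closed field.

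The main new step is $(2) \Rightarrow (3)$, which I would prove by model-theoretic compactness. For each $d > 0$ let $C_d \subseteq \Pol(S, R)$ denote the first-order definable set of polynomials $p$ such that $p(x_1^d, \ldots, x_n^d)$ is a sum of squares. The substitution $x_i \mapsto x_i^k$ turns a sum-of-squares decomposition of $p(x_1^d, \ldots, x_n^d)$ into one of $p(x_1^{kd}, \ldots, x_n^{kd})$, so $C_d \subseteq C_{kd}$ for every $k \in \Z_{>0}$. Suppose $(3)$ fails: for each $d > 0$ there exist a real closed field $R_d$ and $p_d \in \Pol_+(S, R_d) \setminus C_d$. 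Consider the theory $T$ in the language of real closed fields augmented by new constants $c_s$, $s \in S$, consisting of the axioms of real closed fields, the formula expressing $(c_s)_s \in \Pol_+(S)$, and, for each $d > 0$, the formula expressing $(c_s)_s \notin C_d$. Any finite fragment of $T$ involves only finitely many indices $d_1, \ldots, d_m$; taking $D$ to be any common multiple and interpreting the constants by the coefficients of $p_D$ in $R_D$ gives a model, because the nesting $C_{d_i} \subseteq C_D$ converts $p_D \notin C_D$ into $p_D \notin C_{d_i}$ for each $i$. By the compactness theorem $T$ has a model, which supplies a real closed field $R$ and $p \in \Pol_+(S, R)$ with $p \notin C_d$ for every $d$, contradicting $(2)$. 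The main obstacle is arranging this compactness argument cleanly; the remaining steps amount to bookkeeping on top of the results already established.
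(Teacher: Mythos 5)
Your proof is correct and has the same overall architecture as the paper's: the peripheral implications ($(1)\Rightarrow(5)$, $(5)\Rightarrow(2)$ via \Cref{rem:sos} and \Cref{prop:pos-logic}, and the passage from a uniform $d$ to a spectrahedral shadow via $\Sigma_d(S)$ and \Cref{lem:props}) are exactly those of the paper. The one step where you genuinely diverge is $(2)\Rightarrow(3)$. The paper first proves $\Pol_+(S,R)=\bigcup_{k}\Sigma_k(S,R)$ for every real closed $R$ (using the product-of-the-$k_s$ trick over sign patterns), then passes to an $\aleph_1$-saturated elementary extension $\R^*$ of $\R$ and cites \cite[Theorem~2.2.11]{PosPols} to conclude that the increasing union of parameter-free definable cones stabilises, i.e.\ $\Pol_+(S,\R^*)=\Sigma_d(S,\R^*)$ for a single $d$, and finishes by Tarski. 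You instead argue by contraposition with a direct compactness argument: new constants $c_s$, the nesting $C_d\subseteq C_{kd}$, and common multiples to satisfy finite fragments, producing one real closed field and one $p$ violating $(2)$; the uniformity over all real closed fields is then recovered separately by noting that ``nonnegative implies $C_d$'' is a parameter-free sentence and RCF is complete. The two devices (saturation of an elementary extension versus compactness for a theory with constants) are interchangeable here, and your bookkeeping — directedness of the $C_d$, parameter-free definability of $\Pol_+(S,R)$ and of $C_d=\psi_d^{-1}(\Sigma(d\cdot S,R))$ from \Cref{rem:overz}, and the sign-pattern argument showing $\Pol_+(S)\subseteq\Sigma_d(S)$ — is accurate. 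Your route has the mild advantage of avoiding the external reference to \cite{PosPols} at the cost of introducing the auxiliary theory $T$ explicitly; otherwise the proofs carry the same content.
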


\begin{proof}
 The implication $(1)\Rightarrow(2)$ follows from \Cref{thm:monomialshadows} {and \Cref{rem:sos}}.
 Now assume $(2)$. We first claim that $$\Pol_+(S,R)=\bigcup_{k=1}^\infty\Sigma_k(S,R).$$ The inclusion $\supseteq$ is \Cref{lem:props}(3). Let $p\in\Pol_+(S,R)$. By assumption, for all $s\in\{\pm1\}^n$ there is $k_{s}>0$ such that $\psi_{k_{s}}(\varphi_{s}(p))\in\Sigma(k_{s}\cdot S,R)$. Therefore, letting $k=\prod_{s\in\{\pm1\}^n} k_{s}$ we have $\psi_{k}(\varphi_{s}(p))\in\Sigma(k\cdot S,R)$ and thus $p\in\Sigma_k(S,R)$.

 Both $\Pol_+(S,R)$ and each $\Sigma_k(S,R)$ can be defined by a formula without parameters. Thus, letting $\R^*$ be an  $\aleph_1$-saturation of $\R$, we have $\Pol_+(S,\R^*)=\Sigma_d(S,\R^*)$ for some $d>0$ by \cite[Theorem~2.2.11]{PosPols}. By the Tarski principle we thus have $\Pol_+(S,R)=\Sigma_d(S,R)$ for every real closed field $R$. This shows $(3)$.
 
 The direction $(3)\Rightarrow(4)$ is trivial. Finally, assume $(4)$ which implies $\Pol_+(S)=\Sigma_d(S)$. This shows $(1)$ by \Cref{lem:props}(4) and we have that $(1)$---$(4)$ are equivalent. 
 
 It is clear that $(1)$ implies $(5)$ and \Cref{prop:pos-logic} shows that $(5)$ implies $(2)$.
\end{proof}

\begin{rem}\label{rem:copos}
 \Cref{thm:main} also gives a characterization of when the set of all $p\in\Pol(S)$ which are nonnegative on the nonnegative orthant is a spectrahedral shadow. Indeed, a polynomial $p\in\Pol(S)$ is nonnegative on the nonnegative orthant if and only if $p(x_1^2,\ldots,x_n^2)\in\Pol_+(2S)$.
\end{rem}

\begin{rem}
 A closed convex cone $K\subseteq\R^n$ is a spectrahedral shadow if and only if its dual cone is a spectrahedral shadow (Exercise~6.23 in~\cite{MR3050245}). The dual cone of $\Pol_+(S)$ is the closed conical hull of the image of $\R^n$ under the monomial map given by the monomials from $S$.
\end{rem}

The following answers Question 5.2 in \cite{heltonniefalse}. 
For $s \in {\mathbb Z}^n_{\geq 0}$, we write $|s|$ for $s_1+\dots+s_n$.

\begin{cor}\label{cor:copos}
 The set of copositive matrices of size $n\geq5$ is not a spectrahedral shadow. 
 Moreover, it does not have a positive definition in the relational structure over the reals whose relations consist of all spectrahedra and the inequality relation.
\end{cor}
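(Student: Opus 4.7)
The plan is to reduce to $n=5$ by a block-diagonal padding, and to treat the case $n=5$ by applying \Cref{thm:main} to the Horn polynomial of \Cref{cor:notpsdsos}, using the identification in \Cref{rem:copos}. For the reduction, consider the $\R$-linear embedding $\iota\colon\Sym_5\to\Sym_n$ sending $A$ to $\diag(A,0_{n-5})$. A matrix $A\in\Sym_5$ is copositive if and only if $\iota(A)\in\Sym_n$ is: writing $\tilde y=(y,0)\in\R^n_{\geq 0}$ for $y\in\R^5_{\geq 0}$, one has $y^\top A y=\tilde y^\top\iota(A)\tilde y$, which settles the nontrivial direction; the converse is immediate. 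Thus the $5\times 5$ copositive cone is the preimage of the $n\times n$ copositive cone under $\iota$. Both being a spectrahedral shadow and having a positive definition over the structure whose relations are all spectrahedra together with $\neq$ are preserved under $\R$-linear preimages: substituting a linear form in new variables for a coordinate in a spectrahedral atomic formula yields another spectrahedral atomic formula, singleton relations $\{c\}\subset\R$ are themselves spectrahedra, and the relation $\neq$ is trivially preserved under linear substitution. It therefore suffices to prove the statement for $n=5$.

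For $n=5$, set $S_0=\{e_i+e_j : 1\leq i\leq j\leq 5\}\subset\Z_{\geq 0}^5$. Sending $A\in\Sym_5$ to its quadratic form $\sum_{i,j}a_{ij}x_ix_j$ is an $\R$-linear isomorphism $\Sym_5\to\Pol(S_0)$, and post-composing with $q\mapsto q(x_1^2,\dots,x_5^2)$ gives an $\R$-linear isomorphism $\Sym_5\to\Pol(2S_0)$. By \Cref{rem:copos}, this isomorphism carries the copositive cone onto $\Pol_+(2S_0)$. We now apply \Cref{thm:main} with $S=2S_0$: let $h$ denote the Horn polynomial and set $p=h(x_1^2,\dots,x_5^2)\in\Pol_+(2S_0)$. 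For every integer $d>0$,
\[ p(x_1^d,\dots,x_5^d)=h(x_1^{2d},\dots,x_5^{2d}), \]
which is not a sum of squares by \Cref{cor:notpsdsos} applied with $k=2d$. Hence condition (4) of \Cref{thm:main} fails, so conditions (1) and (5) fail as well, and $\Pol_+(2S_0)$ is neither a spectrahedral shadow nor positively definable over the structure of spectrahedra together with $\neq$.

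The only slightly delicate point is the transfer of positive first-order definability through the linear preimage $\iota$; once one observes that spectrahedral atomic formulas are closed under substituting linear forms in variables (so that no negation is ever introduced), the rest of the argument reduces to a direct invocation of \Cref{cor:notpsdsos} and \Cref{thm:main}.
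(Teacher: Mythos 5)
Your proof is correct and, at its core, is the same as the paper's: identify the copositive cone with $\Pol_+(2S)$ via \Cref{rem:copos} and rule out condition (4) of \Cref{thm:main} using the Horn polynomial and \Cref{cor:notpsdsos}. The only difference is your block-diagonal padding reduction to $n=5$; the paper avoids this by viewing the Horn polynomial directly as an element of $\Pol_+(2S)$ for $S=\{s\in\Z_{\geq0}^n\mid |s|=2\}$ (a sum-of-squares representation in $n$ variables would restrict to one in $5$ variables), which sidesteps the mildly delicate point you flag about transporting positive definability and the relation $\neq$ through a linear preimage.
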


\begin{proof}
 By \Cref{rem:copos} the set of copositive matrices of size $n$ can be identified with the set of all polynomials $q\in\R[x_1,\ldots,x_n]$ that are homogeneous of degree two such that $q(x_1^2,\ldots,x_n^2)$ is nonnegative on $\R^n$. Thus, the set of copositive matrices of size $n$ is identified with
 $\Pol_+(2S)$ where $S=\{s\in\Z_{\geq0}^n\mid|s|=2\}$. We have seen in  \Cref{cor:notpsdsos} that the Horn polynomial $h$ satisfies $p=h(x_1^2,\ldots,x_n^2)\in\Pol_+(2S)$ but $p(x_1^d,\ldots,x_n^d)$ is not a sum of squares for every $d>0$. Thus \Cref{thm:main} implies the claim.
\end{proof}

For the rest of this section we summarize the current knowledge on whether $\Pol_+(S)$ is a spectrahedral shadow for a given finite set $S\subset\Z_{\geq0}^n$.

\begin{ex}[Hilbert]
 Let $S\subset\Z_{\geq0}^n$ a finite set and $\Delta_{\leq d}$ the set of all $s\in\Z_{\geq0}^n$ such that $|s|\leq d$.
 Hilbert \cite{hilbert1888} proved that every element of $\Pol_+(S)$ is a sum of squares if we are in one of the following cases:
 \begin{enumerate}
  \item $S\subseteq\Delta_{\leq 2}$;
  \item $n=1$;
  \item $n=2$ and $S\subseteq\Delta_{\leq 4}$.
 \end{enumerate}
 Thus, in all these cases $\Pol_+(S)$ is a spectrahedral shadow.
\end{ex}

\begin{ex}[Fawzi, Scheiderer]
 Let $S\subset\Z_{\geq0}^n$ a finite set which is \emph{downward closed} in the sense that if $s'\leq s$ (coordinate-wise) and $s\in S$, then $s'\in S$. Then by \cite[Theorem~3]{fawzi2019set} the set $\Pol_+(S)$ is a spectrahedral shadow if and only if $\Pol_+(S)=\Sigma(S)$. Thus, for every nonnegative polynomial which is not a sum of squares, letting $S$ be the downward closure of its support, we obtain an instance where $\Pol_+(S)$ is not a spectrahedral shadow. 
 Scheiderer \cite{heltonniefalse} used a related criterion in several examples. A generalization of \cite[Theorem~3]{fawzi2019set} to finite dimensional spaces of polynomials not necessarily spanned by monomials is given in \cite[Theorem~2.14]{bettiol2019convex}.
\end{ex}

\begin{ex}[SONC]\label{ex:rez}
 A finite subset $A\subset\Z_{\geq0}^n$ is called \emph{circuit set} if its convex hull $\conv(A)\subset\R^n$ is a $k$-dimensional simplex whose vertices $v_0,\ldots,v_k$ are in $2\Z_{\geq0}^n$ and $A\setminus\{v_0,\ldots,v_k\}\subseteq\{w\}$ for some $w$ in the relative interior of $\conv(A)$. For any finite $S\subset\Z_{\geq0}^n$ the cone of \emph{sums of nonnegative circuit polynomials} is defined as $$\textrm{SONC}(S)=\sum_{A\subseteq S \textrm{ a circuit set}}\Pol_+(A).$$ Averkov \cite[\S6]{Ave} showed that $\textrm{SONC}(S)$ has a semidefinite extended representation with LMIs of size at most two. Without the bound on the LMI sizes this also follows from a result by Reznick \cite{MR985241, hornrez} which implies that for every $p\in\textrm{SONC}(S)$ the polynomial $p(x_1^d,\ldots,x_n^d)$ is a sum of squares for $d=\max(2,n-1)$. In particular, we have that $\Pol_+(S)$ is a spectrahedral shadow whenever $\Pol_+(S)=\textrm{SONC}(S)$. By definition this is the case if $S$ is a circuit set. Equality is also straightforward to see if every element of $S$ is a vertex of $\conv(S)$. More cases of equality can be deduced from \cite[Theorem~11]{sage} and \cite[Theorem~8.1]{forsgaard2019algebraic}.
\end{ex}

The proof of \Cref{cor:copos} shows the existence of a set $S \subset {\mathbb Z}^n_{\geq 0}$ of size $\frac{1}{2}n(n+1)$, for $n \geq 5$, such that $\Pol_+(S)$ is not a spectrahedral shadow. For $n=5$ this means $|S|=15$. Moreover, Scheiderer~\cite[Remark~4.21]{heltonniefalse} constructs such an $S\subset\Z^2_{\geq0}$ with $|S|=14$.
We do not know the smallest size of a set $S\subset\Z_{\geq0}^n$ such that $\Pol_+(S)$ is {not} a spectrahedral shadow. The following shows that any such set must satisfy $|S|>5$.

\begin{cor}
 Let $S\subset\Z_{\geq0}^n$ with $|S|\leq5$. Then $\Pol_+(S)$ is a spectrahedral shadow.
\end{cor}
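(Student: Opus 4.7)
My plan is to reduce to Theorem~\ref{thm:main} via the cone $\textrm{SONC}(S)$ of sums of nonnegative circuit polynomials discussed in \Cref{ex:rez}. Recall that by that example, for any finite $S \subset \Z_{\geq 0}^n$ and any $p \in \textrm{SONC}(S,R)$ the polynomial $p(x_1^d,\ldots,x_n^d)$ is a sum of squares for $d = \max(2, n-1)$. Combining with \Cref{thm:main}, to prove the corollary it suffices to show
\[
\Pol_+(S,R) \;=\; \textrm{SONC}(S,R)
\]
for every real closed field $R$ whenever $|S| \leq 5$.

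The first step is a normalization: by restricting to the affine span of $S$, one may assume that $\conv(S)$ is full-dimensional. Since $|S| \leq 5$, this forces the effective ambient dimension to satisfy $n \leq 4$, so the Reznick exponent $d = \max(2,n-1)$ depends only on $n$ and is independent of $R$. Next, since the two sides $\Pol_+(S,R)$ and $\textrm{SONC}(S,R)$ are defined by formulas in the language of real closed fields without parameters (use \Cref{rem:overz} for $\Pol_+$, and note that $\textrm{SONC}(S)$ is a finite sum of sets $\Pol_+(A,R)$ for $A \subseteq S$ a circuit set, hence also first-order definable), the Tarski principle lets us verify the equality over $\R$ only.

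The core of the argument is a case analysis on the combinatorial type of $S$, driven by the partition $S = V \sqcup W$ where $V$ is the vertex set of $\conv(S)$ and $W = S \setminus V$. If $W = \emptyset$, any $p \in \Pol_+(S)$ is supported on vertices of its Newton polytope, so each coefficient is nonnegative and each exponent in $V$ lies in $2\Z_{\geq 0}^n$; hence $p$ is a sum of monomial squares, which are trivially in $\textrm{SONC}(S)$. If $|W| = 1$, then after picking an affinely independent subset of $V$ containing the unique interior point $w$ in its convex hull we land in the circuit polynomial situation of \Cref{ex:rez}, and the classical AM--GM estimate of Iliman--de Wolff/Reznick gives the SONC decomposition directly. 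The remaining case is $|W| = 2$, which forces $|V| \in \{3,4\}$: here one uses that each of the two non-vertex points $w_1, w_2$ is a convex combination of $V$, so for any $p \in \Pol_+(S)$ one can split the coefficients at $w_1, w_2$ and distribute them between two circuits $A_1 \supseteq \{w_1\}$ and $A_2 \supseteq \{w_2\}$; nonnegativity of $p$ then translates, via the AM--GM criterion for circuit polynomials, into solvability of a small system of linear inequalities in the split parameters.

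The main obstacle I anticipate is the last sub-case $|W| = 2$: unlike the single-circuit case, the SONC decomposition is no longer forced and one needs to argue that a decomposition exists for \emph{every} choice of nonnegative coefficients, not merely generically. This amounts to checking that the natural cone generated by the two circuit subsets of $S$ exhausts $\Pol_+(S)$, which I expect to handle by an elementary convex-geometric argument using that $|W|=2$ keeps the codimension of the defining linear constraints small enough to allow explicit parameter choices. Once this is verified, the chain $\Pol_+(S,R) = \textrm{SONC}(S,R)$ together with \Cref{ex:rez} and \Cref{thm:main} concludes the proof.
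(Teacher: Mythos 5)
Your overall route---reducing via \Cref{thm:main} and \Cref{ex:rez} to the equality $\Pol_+(S)=\textrm{SONC}(S)$---is the paper's route for most cases, but it breaks down exactly where the paper switches strategies: when the points of $S$ are collinear. If $\conv(S)$ is a segment your case analysis is incomplete ($|W|$ can be $3$, and $|W|=2$ can occur with $|V|=2$), and, worse, the equality $\Pol_+(S)=\textrm{SONC}(S)$ is simply false there. Take $S=\{0,1,2,3,4\}\subset\Z_{\geq0}$ and $p=(x-1)^4=x^4-4x^3+6x^2-4x+1$. In any SONC decomposition supported in $S$ the monomial $x$ can only occur as the inner term of a circuit polynomial supported on $\{0,1,2\}$ or $\{0,1,4\}$ (the point $1$ is odd, so it cannot be a vertex of a circuit), and likewise $x^3$ only on $\{2,3,4\}$ or $\{0,3,4\}$. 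The circuit-number bounds then require the nonnegative outer coefficients at $0,2,4$ --- which must sum to $1,6,1$ --- to satisfy incompatible inequalities: symmetrizing under the degree-reversing involution and using concavity of the circuit numbers, one finds that the two inner coefficients cannot both reach $4$ in absolute value (the optimum is about $2\sqrt{3}<4$). So $p\in\Pol_+(S)\setminus\textrm{SONC}(S)$, even though $\Pol_+(S)$ is of course a spectrahedral shadow. The paper handles the case of at most two vertices separately, by identifying $\Pol_+(S)$ with a cone of nonnegative univariate polynomials and invoking the fact that these are sums of squares; you need this (or something equivalent) as a separate case, since the SONC reduction cannot work there.

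Second, even in the cases where $\Pol_+(S)=\textrm{SONC}(S)$ does hold, your proposal defers the actual proof. The sub-case you flag as hard ($\conv(S)$ a simplex with two non-vertex points of $S$) and the sub-case you pass over too quickly ($|W|=1$ with $\conv(S)$ not a simplex, where the vertex coefficients must be split among several circuits, not just one simplex chosen by Carath\'eodory) are precisely the content of Theorem~11(1) and 11(2) of \cite{sage}, which the paper cites; neither is a routine verification. As written, the argument is a plan whose two essential steps are, respectively, false as stated and not yet proved.
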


\begin{proof}
 Consider the convex hull $K=\conv(S)\subset\R^n$ and let $N$ be the number of vertices of $K$. First suppose that $N\leq2$. Then there is a finite subset $T\subset\Z_{\geq0}$ and $v\in\Z_{\geq0}^n$, $w\in\Z^n \setminus \{0\}$ such that $S=\{v+k\cdot w\mid k\in T\}$ and such that the entries of $w$ do not have a nontrivial common divisor.  If $\Pol_+(S)\neq\{0\}$ we can further assume that $v\in2\Z_{\geq0}^n$. Then the vector space isomorphism $$\Pol(T)\to\Pol(S),\, p\mapsto \underline{x}^v\cdot p(\underline{x}^w)$$identifies $\Pol_+(T)$ with $\Pol_+(S)$. Since every univariate nonnegative polynomial is a sum of squares, the set $\Pol_+(T)$, and hence $\Pol_+(S)$, is a spectrahedral shadow. 
 
In the case $N\geq3$ we claim that $\Pol_+(S)=\textrm{SONC}(S)$. This implies the claim since $\textrm{SONC}(S)$ is a spectrahedral shadow by \cite[\S6]{Ave}, see also \Cref{ex:rez}. First assume that $K$ is a simplex. Since $|S|\leq 5$, there are at most two elements of $S$ that are not vertices of $K$. Hence $\Pol_+(S)=\textrm{SONC}(S)$ follows from \cite[Theorem~11(1)]{sage} (applied to the trivial partition of $S$). Now assume that $K$ is not a simplex. We first note that $N\geq3$ implies that $\dim(K)\geq2$. Therefore, since $K$ is not a simplex, we even have $N\geq4$. Thus there is at most one element of $S$ that is not a vertex of $K$ and we can apply \cite[Theorem~11(2)]{sage} (again to the trivial partition).
\end{proof}

\section{Separating $\Gamma_k$ from $\Gamma_{k+1}$}\label{sec:ave}
The following criteria for a function $f\colon\R\to\R$ to be $k$-positive definite or semidefinite are due to Karlin~\cite{Karlin}.

\begin{lem}\label{lem:hankelcrit}
For $k \in {\mathbb Z}_{>0}$ let $f\in C^{2k-2}(\R)$ and for all $x\in\R$ consider the Hankel matrix $H_f(x)=(f^{(i+j-2)}(x))_{1\leq i,j\leq k}$ {of order $k$}.
\begin{enumerate}
    \item If $f$ is $k$-positive semidefinite, then $H_f(x)$ is positive semidefinite for every $x\in\R$.
    \item If $H_f(x)$ is positive definite for every $x\in\R$, then $f$ is $k$-positive definite. 
\end{enumerate}
\end{lem}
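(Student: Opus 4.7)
The plan is to relate the matrix $M(\epsilon) := (f(x_i+x_j))_{i,j}$ at clustered points $x_i = x/2 + \epsilon s_i$ (for pairwise distinct $s_1,\ldots,s_k\in\R$ and $\epsilon > 0$) to the Hankel matrix $H_f(x)$ via Taylor expansion of $f$ to order $2k-2$ about $x$. Using the multinomial identity $(s_i+s_j)^p/p! = \sum_{a+b=p} s_i^a s_j^b/(a!b!)$, this yields a decomposition
\[
M(\epsilon) = S(\epsilon)\, H_f(x)\, S(\epsilon)^{\top} + E(\epsilon),
\]
where $S(\epsilon)_{i,m} = s_i^{m-1}\epsilon^{m-1}/(m-1)!$ is a confluent-Vandermonde type matrix and $E(\epsilon)$ collects the Taylor contributions having $\max(a,b)\geq k$. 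Writing $S(\epsilon) = S_0 D(\epsilon)$ with $S_0 = (s_i^{m-1})_{i,m}$ the Vandermonde matrix of the $s_i$ and $D(\epsilon) = \operatorname{diag}(1,\epsilon,\ldots,\epsilon^{k-1}/(k-1)!)$, the Vandermonde orthogonality $\sum_i (S_0^{-1})_{m,i} s_i^a = \delta_{m,a+1}$ (valid for $a \leq k-1$) combines with the constraint $\max(a,b) \geq k$ appearing in the terms of $E(\epsilon)$ to force the transformed error $S(\epsilon)^{-1} E(\epsilon) S(\epsilon)^{-\top}$ to vanish entrywise as $\epsilon \to 0^+$. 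For (1), $M(\epsilon)$ is PSD by hypothesis, hence so is $T(\epsilon) := S(\epsilon)^{-1} M(\epsilon) S(\epsilon)^{-\top}$; since $T(\epsilon) = H_f(x) + o(1)$ entrywise and the PSD cone is closed, $H_f(x) \succeq 0$ follows.

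For (2), the same decomposition under the stronger hypothesis that $H_f(x)$ is PD and the openness of the PD cone show that $T(\epsilon)$, and hence $M(\epsilon)$, is PD for all sufficiently small $\epsilon > 0$. This settles $k$-positive definiteness for tuples $(x_1,\ldots,x_k)$ that cluster around some midpoint. To extend this to arbitrary distinct $x_1 < \ldots < x_k$, I plan to argue by induction on $k$: the base case $k=1$ is immediate; in the inductive step the PD-ness of the leading $(k-1)\times(k-1)$ block of $H_f$ gives by induction the positivity of the corresponding minor of $(f(x_i+x_j))$, so by Sylvester's criterion only $\det(f(x_i+x_j))_{1\leq i,j\leq k} > 0$ remains. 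Connecting $(x_1,\ldots,x_k)$ to a clustered configuration by a continuous path in the open region $\{y_1<\ldots<y_k\}$, the determinant is positive near the clustered endpoint by the local step; a putative first zero at some $t_0$ along the path would force $M(t_0)$ to be PSD of rank $k-1$, and a derivative analysis at $t_0$ using the adjugate identity $\operatorname{adj}(M(t_0)) = \alpha vv^{\top}$ for the null vector $v$ and some $\alpha > 0$ should yield a contradiction with the PD-ness of $H_f$ at an appropriately chosen point.

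The hardest part will be this last deformation step in (2): a careful use of the PD-ness of $H_f$ at \emph{every} $x \in \R$—not merely at the center of the local Taylor expansion—will be needed to rule out a vanishing determinant along the path, and some bookkeeping (together with a Taylor expansion at a varying center) may be required to translate the assumed Hankel positivity into the relevant sign information on $\frac{d}{dt}\det M(t)|_{t_0}$.
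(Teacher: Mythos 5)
Part (1) of your proposal is correct and is exactly the argument the paper has in mind: its proof of (1) consists of "applying suitable transformations to $(f(x_i+x_j))_{i,j}$ and letting all $x_i$ converge to the same value," with the details deferred to Karlin, \S2.1. Your confluent--Vandermonde bookkeeping is sound: for a term of $E(\epsilon)$ with $a\geq k$ (hence $b\leq k-2$), conjugation by $S(\epsilon)^{-1}=D(\epsilon)^{-1}S_0^{-1}$ turns $\epsilon^{a+b}u_au_b^{\top}$ into a matrix whose entries are $O(\epsilon^{a-k+1})=O(\epsilon)$, and the Peano remainder $o(\epsilon^{2k-2})$ survives the factor $\epsilon^{-2(k-1)}$; closedness of the PSD cone then gives $H_f(x)\succeq 0$. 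The local step of (2) (openness of the PD cone, so $M(\epsilon)\succ0$ for clustered configurations) is likewise fine, as is the reduction via induction and Sylvester's criterion to showing $\det(f(x_i+x_j))_{1\leq i,j\leq k}>0$.

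The genuine gap is the deformation step, and it is not a matter of bookkeeping. At the first zero $t_0$ along your path you have, by Jacobi's formula, $\frac{d}{dt}\det M(t)\big|_{t_0}=\alpha\, v^{\top}\dot M(t_0)v$ with $\alpha>0$, and $v^{\top}\dot M(t_0)v=2\sum_i v_i\dot x_i\, h'(x_i)$ where $h(y)=\sum_j v_jf(x_j+y)$. This quantity involves only the \emph{first} derivative of $f$ and the velocities of the path you chose in advance; it is automatically $\leq 0$ at a first zero, and there is no route from the hypothesis (positive definiteness of $H_f$, i.e., constraints on $f',\dots,f^{(2k-2)}$ at single points) to the strict inequality $v^{\top}\dot M(t_0)v>0$ that your contradiction requires. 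The sign of $\frac{d}{dt}\det M$ at $t_0$ simply is not controlled by the Hankel hypothesis, so the "derivative analysis at $t_0$" cannot close the argument as described. What actually makes the global step work is the content of Karlin's Theorem 2.6, which the paper cites without proof: one shows that the function $h$, which vanishes at the $k$ points $x_1<\cdots<x_k$, would force (via a generalized Rolle / extended-Chebyshev-system argument applied to the kernel $f(x+y)$) some confluent Hankel-type determinant of order $\leq k$ to be non-positive, contradicting $H_f\succ0$ everywhere. That zero-counting mechanism is of a different nature from a first-order variation of $\det M$ along a path; if you want a self-contained proof of (2) you should either reproduce that argument or cite Karlin as the paper does.
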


\begin{proof}
 Part (1) follows from applying suitable transformations to the matrix $$(f(x_i+x_j))_{i,j}$$ for $x_1<\cdots<x_k$ and then letting all $x_i$ converge to the same value $x$ as it is done in \cite[\S2.1]{Karlin}.
 Part (2) follows from \cite[Thm.~2.6]{Karlin} applied to the kernel $K(x,y)=f(x+y)$.
\end{proof}

This criterion allows us to exhibit a function $\R\to\R$ that is $k$-positive definite, but not $k+1$-positive semidefinite. First we make some observations on the Hankel matrices $H_f(x)$.

\begin{rem}
 By definition the Hankel matrix $H_f(x)$ is linear in $f$.
\end{rem}

For the rest of the section let $0<a_1<\cdots<a_k$ and $g(x)=\sum_{m=1}^{k}\exp(a_mx)$. 
Let $\bar{g}(x)=g(-x)$. 
For any $\epsilon>0$ consider the function {$f_\epsilon \colon \R \to \R$ given by $f_\epsilon(x)=g(x)+\bar{g}(x)-\epsilon$.}

\begin{lem}\label{lem:exh1}
 For any $\epsilon>0$ the function $f_\epsilon$ is not $(k+1)$-positive semidefinite. 
\end{lem}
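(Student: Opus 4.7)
The plan is to apply the contrapositive of \Cref{lem:hankelcrit}(1): it suffices to exhibit a single $x\in\R$ at which the Hankel matrix $H_{f_\epsilon}(x)$ of order $k+1$ fails to be positive semidefinite. Since $f_\epsilon=g+\bar g-\epsilon$, and since the Hankel matrix construction $f\mapsto H_f(x)$ is $\R$-linear, one computes
\[ H_{f_\epsilon}(x)=\sum_{m=1}^k e^{a_m x}\,v_m v_m^{\top}+\sum_{m=1}^k e^{-a_m x}\,w_m w_m^{\top}-\epsilon E_{11}, \]
where $v_m=(1,a_m,a_m^2,\dots,a_m^k)^{\top}$, $w_m=(1,-a_m,a_m^2,\dots,(-a_m)^k)^{\top}$ are the usual ``Vandermonde'' column vectors, and $E_{11}$ is the matrix with a $1$ in the top-left corner and zeros elsewhere. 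The first sum has rank at most $k$ in the $(k+1)$-dimensional ambient space, so it must have a nontrivial kernel, and my strategy is to exploit this.

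Next, I would take $u=(u_1,\dots,u_{k+1})^{\top}\in\R^{k+1}$ to be the coefficient vector of the polynomial $p(t)=\prod_{m=1}^k(t-a_m)$, that is, $u_i$ is the coefficient of $t^{i-1}$ in $p(t)$. Then $u^{\top}v_m=p(a_m)=0$ for every $m$, so $u$ lies in the kernel of the first summand $\sum_m e^{a_m x}v_m v_m^{\top}$. On the other hand $u_1=p(0)=(-1)^k\prod_m a_m\neq 0$ (since each $a_m>0$), so $u^{\top}E_{11}u=u_1^2>0$. Also $u^{\top}w_m=p(-a_m)=(-1)^k\prod_{l=1}^k(a_m+a_l)\neq 0$.

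Plugging $u$ into the above expression for $H_{f_\epsilon}(x)$ yields
\[ u^{\top}H_{f_\epsilon}(x)\,u=\sum_{m=1}^k e^{-a_m x}\bigl(u^{\top}w_m\bigr)^{2}-\epsilon\,u_1^{2}. \]
Since every $a_m>0$, the first term tends to $0$ as $x\to+\infty$, whereas the second term equals the strictly negative constant $-\epsilon\,u_1^2$. Thus for $x$ sufficiently large, $u^{\top}H_{f_\epsilon}(x)u<0$, so $H_{f_\epsilon}(x)$ is not positive semidefinite, and \Cref{lem:hankelcrit}(1) forces $f_\epsilon$ to not be $(k+1)$-positive semidefinite, completing the proof.

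The only mildly delicate point is the choice of the vector $u$ (lying in the common kernel of the $v_m v_m^{\top}$ but \emph{not} vanishing in the top coordinate and \emph{not} orthogonal to all the $w_m$); once this is done, the asymptotic analysis as $x\to\infty$ is routine and does not require any bound relating $\epsilon$ to the $a_m$.
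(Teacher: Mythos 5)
Your proof is correct and follows essentially the same route as the paper: decompose $H_{f_\epsilon}(x)$ into the rank-one Hankel matrices of the exponentials plus $-\epsilon E_{11}$, find a vector annihilating the $v_m$ with nonzero first coordinate, and let $x\to+\infty$. The only difference is that you construct the kernel vector explicitly as the coefficient vector of $\prod_m(t-a_m)$, which gives $u_1=(-1)^k\prod_m a_m\neq 0$ for free, whereas the paper takes an abstract vector in the orthogonal complement and argues separately (via linear independence of truncated Vandermonde vectors) that its first coordinate is nonzero — a small but genuine simplification.
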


\begin{proof}
 {Let ${p}_m(x)=\exp(a_mx)$ for $m \in \{1,\ldots,k\}$. The Hankel matrix $H_{{p}_m}(x)$ of order $k+1$ equals the rank one matrix $$\exp(a_mx)\cdot v_m^{\top} \cdot v_m$$ where $v_m=(1,a_m,\ldots,a_m^k)$ for $m \in \{1,\ldots,k\}$.
 Let $w\in\R^{k+1}$ be a nonzero vector in the orthogonal complement of the subspace spanned by $v_1,\ldots,v_k$. We claim that $w^\top$ is in the kernel of $H_g(x)$ for all $x\in\R$. Indeed, we have for all $x\in\R$ that
 $$H_g(x)\cdot w^\top=\sum_{m=1}^kH_{{p}_m}(x)\cdot w^\top=\sum_{m=1}^k\exp(a_mx)\cdot v_m^{\top} \cdot \underbrace{v_m\cdot w^\top}_{=0}=0.$$
 Further, we claim that the first component of $w$ is nonzero. Indeed, assume otherwise and let $w'\in\R^k$ be such that $w=(0,w')$. Then $w'$ is in the orthogonal complement of the vectors $$(a_1,\ldots,a_1^k),\ldots,(a_k,\ldots,a_k^k).$$
 However, these vectors are linearly independent because $a_1,\dots,a_k$ are pairwise different and nonzero. This is a contradiction to our assumption that $w$ and thus $w'$ is nonzero. Therefore, we have shown that the first component of $w$ is nonzero. Without loss of generality, we may assume that it equals $1$. For $m \in \{1,\ldots,k\}$ let $b_m=-a_m$ and $u_m=(1,b_m,\ldots,b_m^k)$. Then, as above, we compute 
 $$wH_{\bar{g}}(x)w^\top=\sum_{m=1}^k\exp(-a_mx)\cdot\langle u_m,w\rangle^2$$
 which tends to zero for $x\to\infty$ because  $a_1,\dots,a_k$ are positive. Let $x_0\in\R$ be such that $wH_{\bar{g}}(x)w^\top<\epsilon$ for all $x>x_0$. Then we have for all $x>x_0$ that
 $$wH_{f_\epsilon}w^\top=wH_{\bar{g}}w^\top-\epsilon<0. $$ By \Cref{lem:hankelcrit} this proves the claim.}
\end{proof}

\begin{lem}\label{lem:exh2}
 For sufficiently small $\epsilon>0$ the function $f_\epsilon$ is $k$-positive definite. 
\end{lem}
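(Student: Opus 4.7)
The plan is to apply \Cref{lem:hankelcrit}(2), which reduces the problem to showing that for sufficiently small $\epsilon>0$ the Hankel matrix $H_{f_\epsilon}(x)$ of order $k$ is positive definite for every $x\in\R$. By linearity of the operation $f\mapsto H_f(x)$, and because the constant function $1$ has only its zeroth derivative nonvanishing, we may write
$$H_{f_\epsilon}(x)=H_g(x)+H_{\bar g}(x)-\epsilon E_{11},$$
where $E_{11}$ denotes the $k\times k$ matrix with a single $1$ in the top-left entry. Since $E_{11}\preceq I$, it suffices to produce a constant $\delta>0$ with $H_g(x)+H_{\bar g}(x)\succeq \delta\cdot I$ for all $x\in\R$; any choice $\epsilon\in(0,\delta)$ then yields $H_{f_\epsilon}(x)\succeq(\delta-\epsilon)I\succ 0$.

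To produce such a bound, I would factor $H_g(x)=V D_g(x) V^{\top}$, where $V$ has columns $v_m=(1,a_m,\ldots,a_m^{k-1})^{\top}$ for $m\in\{1,\ldots,k\}$ and $D_g(x)=\operatorname{diag}(\exp(a_1x),\ldots,\exp(a_kx))$. Since the $a_m$ are pairwise distinct and positive, the Vandermonde matrix $V$ is invertible, hence $VV^{\top}\succ 0$, so $H_g(x)\succeq \exp(a_1x)\cdot\lambda_{\min}(VV^{\top})\cdot I$. An analogous decomposition $H_{\bar g}(x)=U D_{\bar g}(x)U^{\top}$ with $U$ built from $-a_m$ in place of $a_m$ yields $H_{\bar g}(x)\succeq \exp(-a_kx)\cdot\lambda_{\min}(UU^{\top})\cdot I$. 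Summing these estimates gives
$$H_g(x)+H_{\bar g}(x)\succeq\bigl(c_1\exp(a_1x)+c_2\exp(-a_kx)\bigr)\cdot I$$
for some positive constants $c_1,c_2$.

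The function $x\mapsto c_1\exp(a_1x)+c_2\exp(-a_kx)$ is continuous, strictly positive, and tends to $+\infty$ at both $\pm\infty$, so it attains a positive minimum $\delta$ on $\R$, delivering the required uniform bound. The main subtlety is precisely this uniform positivity at infinity: neither $H_g(x)$ nor $H_{\bar g}(x)$ admits a uniform lower bound in $x$ by itself, since the smallest eigenvalue of $H_g(x)$ degenerates as $x\to-\infty$ and that of $H_{\bar g}(x)$ degenerates as $x\to+\infty$. What rescues the argument is the symmetric construction $g+\bar g$: the two summands dominate at opposite ends of the real line, and together keep $H_g+H_{\bar g}$ uniformly bounded below by a positive multiple of the identity.
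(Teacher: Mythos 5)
Your overall architecture is sound and, once corrected, coincides with the paper's: reduce via \Cref{lem:hankelcrit}(2), absorb the $-\epsilon E_{11}$ term, and establish a uniform lower bound $H_g(x)+H_{\bar g}(x)\succeq\delta I$. However, the two eigenvalue estimates you write down are each false on half of the real line. From $H_g(x)=VD_g(x)V^{\top}$ one may only conclude $H_g(x)\succeq \bigl(\min_m \exp(a_mx)\bigr)\cdot\lambda_{\min}(VV^{\top})\cdot I$, and since $0<a_1<\cdots<a_k$ the minimal coefficient is $\exp(a_1x)$ only when $x\geq0$; for $x<0$ it is $\exp(a_kx)$. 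Your claimed bound $H_g(x)\succeq\exp(a_1x)\lambda_{\min}(VV^{\top})I$ genuinely fails for $x$ sufficiently negative: choosing $w\neq0$ orthogonal to $v_1$ gives $w^{\top}H_g(x)w=\sum_{m\geq2}\exp(a_mx)(v_m^{\top}w)^2=O(\exp(a_2x))=o(\exp(a_1x))$ as $x\to-\infty$. Symmetrically, $H_{\bar g}(x)\succeq\exp(-a_kx)\lambda_{\min}(UU^{\top})I$ fails for large positive $x$. Consequently the displayed inequality $H_g(x)+H_{\bar g}(x)\succeq\bigl(c_1\exp(a_1x)+c_2\exp(-a_kx)\bigr)I$ is unjustified (and false as a matrix inequality), even though its scalar right-hand side does attain a positive minimum.

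The repair is short and lands you essentially on the paper's argument: split into the cases $x\geq0$ and $x\leq0$. For $x\geq0$ every coefficient $\exp(a_mx)$ is $\geq1$, so $H_g(x)\succeq VV^{\top}\succeq c_1I$ while $H_{\bar g}(x)\succeq0$; for $x\leq0$ every coefficient $\exp(-a_mx)$ is $\geq1$, so $H_{\bar g}(x)\succeq UU^{\top}\succeq c_2I$ while $H_g(x)\succeq0$. Hence $H_g(x)+H_{\bar g}(x)\succeq\min(c_1,c_2)\cdot I$ for all $x\in\R$, and any $\epsilon\in(0,\min(c_1,c_2))$ works. This one-half-line-at-a-time splitting, where one summand supplies the positive definite part and the other is merely positive semidefinite, is precisely the decomposition $H_g(x)=A+B(x)$ and $H_{\bar g}(x)=A'+B'(-x)$ used in the paper's proof.
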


\begin{proof}
Let $H_g(x)$ and $H_{\bar{g}}(x)$ be Hankel matrices of order $k$. They are 
  positive linear combinations of positive semidefinite rank one matrices so they are positive semidefinite for all $x\in\R$. {We can write $H_g(x)=A+B(x)$ where
    $A=\sum_{m=1}^k v_m^{\top}\cdot v_m$, $B(x)=\sum_{m=1}^k(\exp(a_mx)-1)\cdot v_m^{\top}\cdot v_m$ and $v_m=(1,a_m,\ldots
 ,a_m^{k-1})$. }
 {Similarly, we can write $H_{\bar g}(x)=A'+B'(-x)$ where
    $A'=\sum_{m=1}^k u_m^{\top}\cdot u_m$, $B'(x)=\sum_{m=1}^k(\exp(-a_mx)-1)\cdot u_m^{\top}\cdot u_m$ and $u_m=(1,-a_m,\ldots
 ,(-a_m)^{k-1})$.}
 We note that when $x\geq0$ we have that $B(x)$ is positive semidefinite because $\exp(a_mx)\geq 
 1$ for $x\geq0$ and similarly {$B'(-x)$} is positive semidefinite when $x\leq0$. In any case, $A$ {and $A'$ are} positive definite since the vectors $v_1,\ldots,v_k$ resp. $u_1,\ldots,u_k$ are linearly independent. Thus, it suffices to choose $\epsilon>0$ in such a way that both {$A-\epsilon \cdot \delta_1 \cdot \delta_1^{\top}$ and $A'-\epsilon \cdot \delta_1 \cdot \delta_1^{\top}$ are} positive definite, where $\delta_1$ is the first unit vector, since $\epsilon \cdot \delta_1 \cdot \delta_1^{\top}$ is the Hankel matrix of the constant function $\epsilon$. {Indeed, for $x\geq0$ we have that
 $$H_{f_\epsilon}(x)=(A-\epsilon \cdot \delta_1 \cdot \delta_1^{\top})+B(x)+H_{\bar{g}}(x)$$is positive semidefinite. Similarly, for $x\leq0$, we have that $$H_{f_\epsilon}(x)=(A'-\epsilon \cdot \delta_1 \cdot \delta_1^{\top})+B'(-x)+H_{{g}}(x)$$is positive semidefinite.
 }
\end{proof}

\begin{cor}\label{lem:exh3}
 For every $k\in\Z_{\geq0}$ there is a function $\R\to\R$ that is $k$-positive definite but not $k+1$-positive semidefinite.
\end{cor}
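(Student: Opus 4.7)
The plan is to combine Lemma \ref{lem:exh1} and Lemma \ref{lem:exh2}, which were set up precisely for this purpose. Fix a strictly increasing sequence $0 < a_1 < \cdots < a_k$ of positive reals (for $k=0$ simply take the empty sequence, so that $g \equiv 0$), and form the functions $g$, $\bar g$, and $f_\epsilon$ as in the paragraphs preceding Lemma \ref{lem:exh1}. By Lemma \ref{lem:exh2} there exists some $\epsilon_0 > 0$ such that $f_{\epsilon_0}$ is $k$-positive definite. By Lemma \ref{lem:exh1}, the function $f_\epsilon$ fails to be $(k+1)$-positive semidefinite for \emph{every} $\epsilon > 0$, and in particular for $\epsilon = \epsilon_0$. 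Thus $f_{\epsilon_0} \colon \R \to \R$ witnesses the corollary.

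The case $k = 0$ should be checked separately, but it is immediate: with the empty sequence, $g \equiv 0$ and $f_\epsilon \equiv -\epsilon$ is a negative constant. By the convention made just after the definition of $k$-positivity, every map is vacuously $0$-positive definite, while $f_\epsilon$ is not $1$-positive semidefinite because the $1 \times 1$ matrix $\bigl(f_\epsilon(0)\bigr) = (-\epsilon)$ is negative.

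Since the substantive content lies in the two preceding lemmas, no real obstacle remains at this stage; the only thing to verify is the compatibility of the quantifiers on $\epsilon$, and that is automatic because Lemma \ref{lem:exh1} holds for \emph{all} $\epsilon>0$ while Lemma \ref{lem:exh2} needs only \emph{some} sufficiently small $\epsilon$. Any such common $\epsilon$ yields the required function.
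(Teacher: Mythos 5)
Your proof is correct and is exactly the paper's argument: the paper's proof of this corollary is the single sentence ``For sufficiently small $\epsilon>0$ the function $f_\epsilon$ has the desired properties,'' i.e.\ precisely your combination of \Cref{lem:exh1} (which holds for all $\epsilon>0$) with \Cref{lem:exh2} (which holds for some $\epsilon>0$). Your explicit treatment of $k=0$ is a harmless extra (though the vacuousness of $0$-positive definiteness follows from \Cref{def:totpos} itself rather than from the convention for $0$-positive maps $L\colon R\to R$).
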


\begin{proof}
For sufficiently small $\epsilon>0$ the function $f_\epsilon$ has the desired properties.
\end{proof}

\begin{ex}
 Let $k=2$ and $0<a_1<a_2$. Then $$f_\epsilon(x)=\exp(a_1x)+\exp(a_2x)+\exp(-a_1x)+\exp(-a_2x)-\epsilon.$$For $x\geq0$ resp. $x\leq 0$ the Hankel matrix $H_f(x)$ is of the form $${\underbrace{\begin{pmatrix}2&\pm(a_1+a_2)\\\pm(a_1+a_2)&a_1^2+a_2^2
 \end{pmatrix}}_{=A\textnormal{ resp. } A'\textnormal{ in the proof of \ref{lem:exh2} }}+\begin{pmatrix}-\epsilon&0\\0&0
 \end{pmatrix}+C}(x)
 $$where ${C}(x)$ is positive semidefinite. For instance letting $a_1=\ln(2)$, $a_2=\ln(3)$,  
  and $\epsilon=\frac{1}{11}$ makes $H_{f_{\frac{1}{11}}}(x)$ positive definite for all $x\in\R$. {Indeed, in this case we have $$\det \begin{pmatrix}2-\epsilon&\pm(a_1+a_2)\\\pm(a_1+a_2)&a_1^2+a_2^2
 \end{pmatrix} =\frac{21}{11}\cdot(\ln(2)^2+\ln(3)^2)-(\ln(2)+\ln(3))^2\approx 0.011>0.$$
 Thus, $f_{\frac{1}{11}}$ is $2$-positive definite. However, the matrix $(f_{\frac{1}{11}}(x_i+x_j))_{1\leq i,j\leq 3}$ for $x_1=5, x_2=6, x_3=7$ equals
 $$ \begin{pmatrix}
 \frac{39956170693955}{665127936} & \frac{715125242123209}{3990767616} & \frac{12823220129727323}{23944605696} \\ &&&\\
 \frac{715125242123209}{3990767616} & \frac{12823220129727323}{23944605696} & \frac{230229525738486289}{143667634176} \\&&&\\
 \frac{12823220129727323}{23944605696} & \frac{230229525738486289}{143667634176} & \frac{4137070068201557555}{862005805056}
 \end{pmatrix}.$$Its determinant is $$-\frac{2277541160576348197}{107750725632}<0$$
 and thus the matrix is not positive semidefinite.} Therefore, $f_{\frac{1}{11}}$ is not 3-positive semidefinite.
\end{ex}

{In order to apply \Cref{thm:totimpcomp} consider the real closed field $H=\R[[\epsilon^\R]]$ of Hahn series over $\R$ in the ({infinitesimal}) indeterminate $\epsilon$ and with value group $\R$.}

\begin{cor}[Cor.~2.5 in \cite{Ave}]
 The cone {$K$} of $k\times k$ real symmetric positive semidefinite matrices has no semidefinite extended formulation with 
 LMIs of size less than $k$. 
\end{cor}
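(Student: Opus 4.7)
The plan is to apply the model-theoretic machinery developed in Section~\ref{sect:k-pos}. Suppose, for contradiction, that $K$ admits a semidefinite extended formulation with LMIs of size at most $k-1$; equivalently, the defining formula of $K$ has a primitive positive definition over the structure $\fA_{k-1}$. By the Tarski principle for real closed fields (as described in the introduction), the cone $K^*$ of $k\times k$ symmetric positive semidefinite matrices with entries in $H=\R[[\epsilon^\R]]$ then has a primitive positive definition over $\fA_{k-1}^*$, and hence is preserved by every map $H\to H$ that preserves all the relations of $\fA_{k-1}^*$. I shall derive a contradiction by exhibiting a map preserving $\fA_{k-1}^*$ but not $K^*$. (The case $k=1$ is trivial and may be excluded.)

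By \Cref{lem:exh3} (applied with $k-1$ in the role of $k$) there is a function $f_0\colon\R\to\R$ which is $(k-1)$-positive definite but not $k$-positive semidefinite. Since every $(k-1)$-positive definite function is in particular $1$-positive definite (principal submatrices of positive definite matrices remain positive definite), one has $f_0(0)>0$, so rescaling by \Cref{rem:poskconvex} yields a function $f\colon\R\to\R$ with $f(0)=1$ that retains both properties. Consider the associated map $L_f\colon H\to H$. By construction $L_f$ is $\R$-linear; since $f(0)=1$, it is unital; and \Cref{thm:totimpcomp}(1) shows that $L_f$ is $(k-1)$-positive. Hence by \Cref{lem:sdpdeg}, $L_f$ preserves every formula in $\Gamma_{k-1}$, and thus every relation of $\fA_{k-1}^*$. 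In view of the first paragraph, $L_f$ must therefore preserve $K^*$.

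On the other hand, \Cref{thm:totimpcomp}(2) shows that if $L_f$ were $k$-positive, then $f$ would have to be $k$-positive semidefinite, contrary to its construction. Thus $L_f$ is not $k$-positive, so there exists a symmetric positive semidefinite $k\times k$ matrix with entries in $H$ whose entrywise image under $L_f$ fails to be positive semidefinite. That is, $L_f$ does not preserve $K^*$, contradicting the conclusion of the previous paragraph. Essentially all of the genuine analytic content has already been packaged into the construction of \Cref{lem:exh1}--\Cref{lem:exh3}; the only step remaining is this bookkeeping combination of \Cref{thm:totimpcomp}, \Cref{lem:sdpdeg}, and the Tarski principle, which is the crux of the new proof and replaces Averkov's original argument.
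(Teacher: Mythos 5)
Your proof is correct and follows essentially the same route as the paper: take the function from \Cref{lem:exh3} that is $(k-1)$-positive definite but not $k$-positive semidefinite, normalize via \Cref{rem:poskconvex}, and combine \Cref{thm:totimpcomp} with \Cref{lem:sdpdeg} to get a map on $H=\R[[\epsilon^\R]]$ preserving all LMIs of size $k-1$ but not the PSD cone. The only difference is presentational: you spell out the Tarski-principle bookkeeping as a proof by contradiction, whereas the paper states the same argument directly.
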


\begin{proof}
By \Cref{lem:exh3} there exists a 
function $f \colon \R\to\R$ that is $(k-1)$-positive definite  but not $k$-positive semidefinite. {By \Cref{rem:poskconvex} we can assume without loss of generality that $f(0)=1$.}
Then {$L_f \colon H\to H$} does not preserve $K$ by \Cref{thm:totimpcomp}(2).
But $L_f$ preserves all spectrahedra of size $k-1$ by~\Cref{lem:sdpdeg}, and hence it preserves all sets that have an semidefinite extended formulation with LMIs of size at most $k-1$. This shows that $K$ does not have such a formulation.
\end{proof}



In the same way as in the proof of \Cref{prop:pos-logic} we even obtain the following stronger statement.

\begin{cor}
The cone {K} of $k\times k$ real symmetric positive semidefinite matrices does not have a positive definition over the relational structure whose relations consist of all sets that can be described by an LMI of size {less than $k$} and the inequality relation $\neq$.
\end{cor}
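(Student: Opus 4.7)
The plan is to run the proof of the previous corollary in parallel with the argument of \Cref{prop:pos-logic}. Start from the function $f\colon\R\to\R$ produced by \Cref{lem:exh3}, which is $(k-1)$-positive definite but not $k$-positive semidefinite. By \Cref{rem:poskconvex} we may rescale $f$ by a positive constant so that $f(0)=1$ (note that 1-positive definiteness forces $f(0)>0$), and the rescaled function still has the required (semi-)definiteness properties. Working over $H=\R[[\epsilon^\R]]$, the associated map $L_f\colon H\to H$ is then unital, $\R$-linear by construction, and $(k-1)$-positive by \Cref{thm:totimpcomp}(1). By \Cref{lem:sdpdeg} this means that $L_f$ preserves every relation in $\Gamma_{k-1}$, i.e.\ every set definable by an LMI of size less than $k$. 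On the other hand, \Cref{thm:totimpcomp}(2) guarantees that $L_f$ does \emph{not} preserve the cone $K$ of $k\times k$ positive semidefinite matrices.

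The crucial additional input compared to the previous corollary is that $L_f$ is \emph{bijective}. For $k\geq 3$ this is immediate from \Cref{lem:lfbij}, since $L_f$ is unital and 2-positive (as a consequence of being $(k-1)$-positive with $k-1\geq 2$). Surjectivity then permits invoking the easy direction of the Lyndon preservation theorem (Corollary 10.3.5 in \cite{HodgesLong}): any positive first-order formula over the relations of $\Gamma_{k-1}$ is preserved by $L_f$. Injectivity additionally ensures that $L_f$ preserves the inequality relation $\neq$.

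Combining these two observations with the Tarski principle for real closed fields (exactly as sketched in the introduction and used in \Cref{prop:pos-logic}), we conclude: if $K$ admitted a positive definition over $\Gamma_{k-1}\cup\{\neq\}$, then $L_f$ would be forced to preserve $K$, contradicting \Cref{thm:totimpcomp}(2). Hence no such positive definition exists.

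The main obstacle is essentially already resolved upstream. The $k$-positivity failure on $K$ is handed to us by \Cref{lem:exh3} and \Cref{thm:totimpcomp}(2); preservation of LMIs of size less than $k$ is handed to us by \Cref{lem:sdpdeg}; and the step that genuinely distinguishes positive logic from existential positive logic, namely surjectivity (together with injectivity for $\neq$), is handed to us by \Cref{lem:lfbij} once we know $k-1\geq 2$. So the argument is a direct adaptation of \Cref{prop:pos-logic} with $\Gamma_k$ in place of the spectrahedra and the copositive/$\Pol_+(S)$ example replaced by the PSD cone.
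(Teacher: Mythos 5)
Your argument is exactly the paper's: the printed proof of this corollary is literally the single sentence that it follows ``in the same way as in the proof of \Cref{prop:pos-logic}'', and your expansion --- normalize $f$ from \Cref{lem:exh3} so that $f(0)=1$, obtain a unital, $\R$-linear, $(k-1)$-positive $L_f$ on $\R[[\epsilon^{\R}]]$ that preserves $\Gamma_{k-1}$ by \Cref{lem:sdpdeg} but not $K$ by \Cref{thm:totimpcomp}(2), check that $L_f$ is bijective, and conclude via the Lyndon preservation theorem and the Tarski principle --- is the intended one. The only loose end is the case $k=2$, which you explicitly exclude from the bijectivity step: there $L_f$ is only guaranteed $1$-positive, so \Cref{lem:lfbij} does not apply as stated, but $1$-positive definiteness of $f$ already gives $f(a)=f\bigl(2\cdot\tfrac{a}{2}\bigr)>0$ for every $a\in\R$ since $\R$ is divisible, so $L_{1/f}$ is a two-sided inverse and bijectivity (hence preservation of $\neq$) still holds for all $k\geq 2$.
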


\appendix

\section{Real closed separation}\label{sec:appendi}
We follow the line of arguments from \cite{rcsep}. We start with a lemma that is well-known which we include for the sake of completeness.

\begin{lem}\label{seplem}
Let $R$ be a real closed field and $I$ some finite index set. Let $V_1,V_2$ be finite-dimensional $R$-vector spaces. Let $\alpha_i \colon V_1 \to V_2$, $i\in I$, be some homogeneous polynomial maps  with the property that no non-trivial positive linear combination of images of the $\alpha_i$ is trivial, meaning that $$\sum_{j=1}^r \lambda_j\alpha_{i_j}(v_j)=0$$ for $r\in\N$, $i_j\in I$ and $\lambda_{j}>0$ implies that all $v_j \in V_{i_j}$ are zero. Then $$C := {\rm conv}({\cup_{i\in I}\alpha_i(V_1)})$$ is a closed convex cone. Moreover, if $b \not \in C$, then there exists an $R$-linear map $\phi \colon V_2 \to R$, such that $\phi(c)\geq 0$ for $c \in C$ and $\phi(b)<0$.
\end{lem}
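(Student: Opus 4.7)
My plan is to split the argument into three steps: verify that $C$ is a convex cone, establish its closedness, and construct the separating functional. For the cone property, each $\alpha_i$ is homogeneous of some positive degree $d_i$, and since $R$ is real closed, every nonnegative element admits a $d_i$-th root; consequently $\alpha_i(V_1)$ is stable under multiplication by $R_{\geq 0}$, so $C$, being the convex hull of the union of these sets, is a convex cone.

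Closedness is the heart of the argument. (Alternatively, one could reduce to $R = \R$ via the Tarski transfer principle, since both the hypothesis and the closedness of $C$ are first-order expressible in the coefficients of the $\alpha_i$ for fixed dimensions and degrees.) I would take $x$ in the semialgebraic closure of $C$ and use the curve selection lemma to choose a definable path $\gamma \colon (0,\epsilon) \to C$ with $\lim_{t \to 0^+} \gamma(t) = x$. The conic Carath\'eodory theorem (valid over any ordered field) provides a representation $\gamma(t) = \sum_{k=1}^N \lambda_k(t)\, \alpha_{i_k(t)}(v_k(t))$ with $N = \dim V_2$, $\lambda_k(t) \ge 0$, and $v_k(t) \in S(V_1)$, the unit sphere of $V_1$. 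Since $I^N$ is finite, after shrinking $\epsilon$ I may assume the indices $i_k(t) = i_k$ are constant, and by definable compactness of $S(V_1)$ I may assume $v_k(t) \to v_k \in S(V_1)$. If the $\lambda_k(t)$ stay bounded, passing to a subpath gives $x = \sum_k \lambda_k\, \alpha_{i_k}(v_k) \in C$. Otherwise $\Lambda(t) := \max_k \lambda_k(t) \to \infty$; dividing by $\Lambda(t)$ and passing to the limit yields $0 = \sum_k \mu_k\, \alpha_{i_k}(v_k)$ with $\max_k \mu_k = 1$, so retaining the indices with $\mu_k > 0$ produces a nontrivial positive combination vanishing in $V_2$. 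The hypothesis then forces those $v_k$ to be zero, contradicting $v_k \in S(V_1)$, and so $x \in C$.

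For the separation, I would pick any $c_1 \in C$, set $r = \|c_1 - b\|$, and note that $C \cap \overline{B_r(b)}$ is definably compact. The continuous semialgebraic function $c \mapsto \|c - b\|^2$ attains its minimum on this set at some $c_0 \in C$, which is in fact the global minimizer on $C$ since points outside $\overline{B_r(b)}$ are farther from $b$ than $c_1$. Setting $\phi(x) := \langle c_0 - b,\, x \rangle$ for the standard bilinear form on $V_2$, the usual first-variation arguments, using convexity of $C$ along $(1 - t) c_0 + t c$ for $c \in C$ and $t \to 0^+$, and the cone property along $t c_0$ for $t \to 0^+$ and $t \to \infty$, give $\phi \ge 0$ on $C$ and $\phi(c_0) = 0$, hence $\phi(b) = -\|c_0 - b\|^2 < 0$. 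The main obstacle is the closedness step; the nondegeneracy hypothesis is exactly what prevents the Carath\'eodory coefficients from blowing up in the argument above.
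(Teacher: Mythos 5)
Your proof is correct, but it takes a genuinely different route from the paper's on both halves of the statement. For closedness, the paper avoids any limiting process: it forms the semialgebraically compact set $B$ of convex combinations of points $\alpha_i(v)$ with $v$ in the unit sphere, uses the nondegeneracy hypothesis once to see $0\notin B$, and then exhibits $C$ as the projection of a closed subset of $R^n\times B$ along the compact factor $B$, which is automatically closed. You instead run a curve-selection-plus-blow-up argument: if the Carath\'eodory coefficients along a path converging to a boundary point were unbounded, rescaling would produce a vanishing nontrivial positive combination of $\alpha_{i_k}(v_k)$ with $v_k$ on the sphere, contradicting the hypothesis. Both arguments invoke the hypothesis at the analogous place, so the logic is parallel, but yours needs more o-minimal machinery; in particular, over a general real closed field you cannot pass to subsequences, so you should say explicitly that the data $(i_k(t),\lambda_k(t),v_k(t))$ can be chosen \emph{semialgebraically} in $t$ (definable choice for semialgebraic families), after which the existence of the limits you take follows from the monotonicity theorem. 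This is a standard point and not a gap, but it is load-bearing. For the separation, the paper simply cites the conical separation theorem, whereas you prove it from scratch by nearest-point projection onto the closed cone; your argument is purely algebraic once semialgebraic compactness gives the minimizer, and makes the appendix more self-contained at the cost of a little length. One last shared caveat: both your cone-property argument and the paper's tacitly assume each $\alpha_i$ is homogeneous of \emph{positive} degree (so that $\alpha_i(0)=0$ and $\alpha_i(V_1)$ is stable under $R_{\geq 0}$-scaling via $d_i$-th roots), which is the case in the intended application.
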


\begin{proof}
 After choosing bases we can assume that $V_1=R^d$ and $V_2=R^n$.
 Let $$S=\{v\in R^d\mid ||v||_2=1\}$$ the unit sphere with respect to the euclidean norm. 
 Since each $\alpha_i$ is homogeneous, we have that $C$ is the conical hull of $\cup_{i\in I}\alpha_i(S)$. In particular, it is a convex cone. Thus it remains to prove that $C$ is closed. 
 Let $m=|I|$. We consider the map $$\Phi: S^{m\cdot n}\times\Delta_{m\cdot n}\to R^n,\, (v_{11},\ldots,v_{mn},\lambda_{11},\ldots,\lambda_{mn})\mapsto \sum_{j=1}^n\sum_{i=1}^m\lambda_{ij}\alpha_i(v_{ij})$$where $\Delta_{mn}\subset R^{mn}$ is the standard simplex. The image $B\subset R^n$ of $\Phi$ is semialgebraically compact as the image of a semialgebraically compact set under a continuous semialgebraic map. We claim that $$\Sigma=\{(v,b)\in R^n\times B\mid \exists \lambda\geq0: v=\lambda\cdot b\}$$is a closed subset of $R^n\times B$. Indeed, it can be written as $$\Sigma=\{(v,b)\in R^n\times B\mid \forall i,j: (v_ib_j=v_jb_i)\wedge (v_ib_i\geq0)  \}$$since $0\not\in B$ by assumption. Since $B$ is semialgebraically compact, the projection of $\Sigma$ onto the first coordinate, which equals to $C$ by Carath\'eodory's theorem for the conical hull, is closed as well. 
 Now the conical separation theorem implies the additional statement.
\end{proof}

Let $R$ be a real closed field and  $A$  a commutative $R$-algebra. For $g_1,\ldots,g_r\in A$ we denote  $${\rm U}_A(g_1,\ldots,g_r)=\{\alpha\in\Sper(A)\mid g_1(\alpha)>0,\ldots,g_r(\alpha)>0\}$$ where $\Sper(A)$ is the \emph{real spectrum} of $A$, see for example \cite[\S7]{RAG}. We further consider the \emph{quadratic module} ${\rm QM}_A(g_1,\ldots,g_r)$, i.e., the set of all $$s_0+s_1 g_1+\cdots+s_r g_r\in A$$where the $s_i\in A$ are sum of squares. We say that $A$ is \emph{real reduced} if $\sum_{i=1}^k a_i^2=0$ implies $a_1=\cdots=a_k=0$ for all $k \in \mathbb N$. 

\begin{thm}\label{thm:mainsep}
Let $R$ be a real closed field and let $A$ a real reduced commutative $R$-algebra. Let $b,g_1,\ldots,g_r\in A$ be such that the following holds:
\begin{enumerate}
    \item $b\not\in{\rm QM}_A(g_1,\ldots,g_r)$, 
    \item ${\rm U}_A(g_1,\ldots,g_r)$ is Zariski dense in $\Sper(A)$.
\end{enumerate}
Then there exists a real closed field $R'$ containing $R$ and an $R'$-linear map
$$\phi \colon A' = R' \otimes_R A \to R'$$
such that $\phi(1 \otimes b)<0$ and $\phi(c)\geq0$ for all $c\in{\rm QM}_{A'}(g_1,\ldots,g_r)$.
\end{thm}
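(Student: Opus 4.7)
The plan is to combine \Cref{seplem}, applied to finite-dimensional truncations of $\operatorname{QM}_A(g_1,\dots,g_r)$, with a first-order compactness argument that assembles the local separating functionals into a single $R'$-linear map on $A' = R' \otimes_R A$.

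First I would fix a finite-dimensional $R$-subspace $U \subseteq A$ containing $1, b, g_1, \dots, g_r$, set $g_0 := 1$, and consider the still finite-dimensional subspace
\[
V := R \cdot b + \sum_{i=0}^{r} U^2 \cdot g_i \subseteq A.
\]
The degree-two homogeneous polynomial maps $\alpha_i \colon U \to V$, $t \mapsto t^2 g_i$, are the inputs for \Cref{seplem}. Its non-degeneracy hypothesis demands that any relation $\sum_j \lambda_j t_j^2 g_{i_j} = 0$ in $A$ with $\lambda_j > 0$ force all $t_j = 0$. Evaluating at any $\alpha \in {\rm U}_A(g_1,\dots,g_r)$, the relation becomes a sum of nonnegative terms in the ordered residue field equal to zero (because $g_{i_j}(\alpha) > 0$ and $\lambda_j > 0$), so $t_j(\alpha) = 0$ for every $j$. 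By hypothesis (2), ${\rm U}_A$ is Zariski dense in $\Sper(A)$, and real reducedness forces the real nilradical of $A$ to be trivial; together these place each $t_j$ in the intersection of all supports of points of $\Sper(A)$, hence $t_j = 0$. \Cref{seplem} then produces a closed convex cone $C_U := \operatorname{conv}(\bigcup_i \alpha_i(U)) \subseteq V \cap \operatorname{QM}_A(g_1,\dots,g_r)$, and hypothesis (1) gives $b \notin C_U$; the lemma delivers an $R$-linear functional $\phi_U \colon V \to R$ with $\phi_U(b) < 0$ and $\phi_U \geq 0$ on $C_U$. I would extend $\phi_U$ arbitrarily to an $R$-linear map $A \to R$.

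To pass from this family of local functionals to a single global one over a real closed extension, I would use the compactness theorem. Work in the language of real closed fields augmented by a constant symbol $y_a$ for each $a \in A$, with theory $T$ consisting of the real closed field axioms, the $R$-linearity identities $y_{a+a'} = y_a + y_{a'}$ and $y_{\lambda a} = \lambda y_a$ for $\lambda \in R$, the inequality $y_b < 0$, and, for every finite tuple $(a_1,\dots,a_m)$ in $A$, every $i \in \{0,\dots,r\}$, and every subset $S \subseteq \{1,\dots,m\}$, the inequality $\det((y_{a_j a_{j'} g_i})_{j,j' \in S}) \geq 0$. These minor inequalities axiomatize positive semidefiniteness of the matrices $M_i := (y_{a_j a_{j'} g_i})$; since over any real closed field positive semidefiniteness is equivalent to the associated quadratic form being nonnegative on the whole field, the $R'$-linear extension of $a \mapsto y_a$ will then be nonnegative on all of $\operatorname{QM}_{A'}(g_1,\dots,g_r)$. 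A finite subtheory involves only finitely many constants and relations, and is modelled in $R$ by the functional $\phi_U$ from the previous step for $U$ sufficiently large --- indeed, $\phi_U \geq 0$ on $\operatorname{QM}_A$ implies that the relevant matrices over $R$ are positive semidefinite and hence have nonnegative principal minors. Compactness yields a model, giving the desired real closed extension $R' \supseteq R$ together with values $y_a \in R'$ defining an $R$-linear map $A \to R'$; extending $R'$-linearly produces the required $\phi \colon A' \to R'$.

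The main obstacle is the non-degeneracy verification for \Cref{seplem}: in a general commutative algebra a positive combination of the form $\sum \lambda_j t_j^2 g_{i_j}$ can vanish without forcing $t_j = 0$ (think of nilpotents, or of $g_i$ that vanish on most of $\Sper(A)$), and it is exactly the interplay of hypothesis (2) with real reducedness that rules this out. Once non-degeneracy is secured, the application of \Cref{seplem} and the compactness assembly follow standard patterns.
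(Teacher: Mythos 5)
Your argument is correct and follows the paper's proof in all essentials: both apply \Cref{seplem} to the maps $a\mapsto a^2g_i$ on finite-dimensional subspaces (with the same use of Zariski density of ${\rm U}_A(g_1,\ldots,g_r)$ and real reducedness to check the non-degeneracy hypothesis), and both then pass to a real closed extension by a limiting argument over the finite-dimensional pieces --- the paper via an ultraproduct of $R$ indexed by the finite-dimensional subspaces, you via compactness with principal-minor inequalities encoding positive semidefiniteness of the Gram matrices $(y_{a_ja_{j'}g_i})_{j,j'}$, which amounts to the same verification. The only point to make explicit is that your theory should also include the diagram of $R$ (not just the linearity identities with coefficients $\lambda\in R$) so that the model produced by compactness genuinely contains $R$; with that standard addition the proof is complete.
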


\begin{proof}
{Since $A$ is real reduced, we have that $\Sper(A)\neq\emptyset$. If $g_i=0$ for some $i\in\{1,\dots,r\}$, then ${\rm U}_A(g_1,\ldots,g_r)=\emptyset$ contradicting our second assumption. Therefore, all the $g_i$ are nonzero.
In order to simplify notation we further let $g_0=1$.}
Let $H$ be a finite-dimensional $R$-subspace of $A$ containing $b,g_0,\ldots,g_r$. Consider the set
$$C(H) := \left\{\sum_{i=0}^rg_i\cdot\sum_{j=1}^{k_i} a_{ij}^2 \mid k_i \in \mathbb N, a_{ij} \in H \right\}.$$

Note that $C(H)$ is a convex cone inside the finite-dimensional subspace
$H^{(2)} := {\rm span}_R\{g_ia_1a_2 \in A \mid a_1,a_2 \in H,\, 0\leq i\leq r\}$ and $b \in H^{(2)}$ but $b \not \in C(H).$ Since $A$ is real-reduced and by condition $(2)$, we can apply \Cref{seplem} to the maps $$\alpha_i: H\to H^{(2)},\, a\mapsto a^2\cdot g_i.$$ Therefore, there exists an $R$-linear map $\phi_H \colon H^{(2)} \to R$ such that $\phi_H(c)\geq 0$ for all $c \in C(H)$ and $\phi_H(b)<0$. Let $\phi'_{H} \colon A \to R$ be an arbitrary linear extension of $\phi_H$ to $A$. Consider an ultrafilter $\mathcal U$ on the set of finite-dimensional subspaces of $A$, containing the filter generated by sets of the form $S_H:=\{H' \mid H \subset H' \}$ for $H$ a finite-dimensional subspace, and consider the ultra-product
$R':= \prod{}^{\mathcal U} R$ which is again a real closed field. There is a natural $R$-linear map $\phi' \colon A \to R'$ induced by all $\phi'_H$ for $H$ as above. 
Consider the $R'$-linear map 
$$\phi:=1_{R'} \otimes_R \phi' \colon R' \otimes_R A \to R'.$$ It is clear by construction that $\phi(1 \otimes b)<0$. 
We claim that $\phi$ is nonnegative on ${\rm QM}_{A'}(g_1,\ldots,g_r)$. It suffices to show that $\phi$ is nonnegative on elements of the form $a^2g_k$ for $a \in R' \otimes_R A$ and $0\leq k\leq r$. We write $a= \sum_{j=1}^l x_j \otimes y_j$ with $x_j \in R'$ and $y_j \in A.$ Let $H$ be a finite-dimensional subspace of $A$ containing $b,g_0,\ldots,g_r$ and $y_1,\dots,y_l$. Consider the matrix
$$(\phi(1 \otimes g_ky_iy_j))_{i,j=1}^{{l}} = (\phi'(g_ky_iy_j))_{i,j=1}^{{l}} \in M_{{l}}(R').$$
For $x=(x_1,\dots,x_l) \in R'^l$ and $y:=(g_ky_iy_j)_{i,j=1}^l$ we get
$\phi(a^2g_k)=x \phi'(y)x^{\top}$, so that it is sufficient to show that $\phi_H(y) \in M_{{l}}(R)$ is positive semi-definite for all such $H$. However, this is clear since for $x' \in R^l$, we have
$$x' \phi_H(y)x'^{\top} = \phi_H( (x'_1y_1+ \cdots +x'_ly_l)^2\cdot g_k) \geq 0$$ by construction. 
This finishes the proof. 
\end{proof}

\begin{cor}\label{cor:mainsep}
Let $R$ be a real closed field and let $A$ a real reduced commutative $R$-algebra. Let $b\in A$ not a sum of squares. 
Then there exists a real closed field $R'$ containing $R$ and an $R'$-linear map
$$\phi \colon R' \otimes_R A \to R'$$
which is nonnegative on sums of squares such that $\phi(1 \otimes b)<0$.
\end{cor}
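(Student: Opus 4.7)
The plan is to deduce the corollary as an immediate specialization of Theorem~\ref{thm:mainsep}. I would apply that theorem with $r=0$, that is, with no inequality constraints $g_1,\ldots,g_r$ at all. (If one prefers to avoid an empty list of generators, an equivalent choice is $r=1$ with $g_1=1$, since then $\mathrm{U}_A(1)=\mathrm{Sper}(A)$ and $\mathrm{QM}_A(1)$ still coincides with the cone of sums of squares in $A$.)

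With $r=0$, the quadratic module $\mathrm{QM}_A()$ reduces to the set of sums of squares in $A$, so hypothesis (1) of Theorem~\ref{thm:mainsep}, namely $b\notin\mathrm{QM}_A()$, is precisely the assumption of the corollary that $b$ is not a sum of squares. Hypothesis (2) asks that $\mathrm{U}_A()$ be Zariski dense in $\mathrm{Sper}(A)$; but with no strict inequalities, $\mathrm{U}_A()$ is all of $\mathrm{Sper}(A)$, and density is automatic. (Real reducedness of $A$ guarantees $\mathrm{Sper}(A)$ is nonempty, but we do not even need this to verify condition (2).)

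Applying Theorem~\ref{thm:mainsep} in this setting then produces a real closed field extension $R'\supseteq R$ and an $R'$-linear map $\phi\colon R'\otimes_R A\to R'$ with $\phi(1\otimes b)<0$ and $\phi(c)\ge 0$ for every $c$ in $\mathrm{QM}_{R'\otimes_R A}()$, which is exactly the cone of sums of squares in $R'\otimes_R A$. This is precisely the statement of the corollary. Since the proof is a pure specialization, there is no genuine obstacle beyond confirming that both hypotheses of the more general theorem are trivially satisfied in the unconstrained case; all the real technical work (the finite-dimensional separation of Lemma~\ref{seplem} together with the ultrafilter/ultraproduct passage to a suitable real closed extension) has already been carried out in the proof of Theorem~\ref{thm:mainsep}.
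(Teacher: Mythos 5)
Your derivation is correct and is exactly what the paper intends: the corollary is stated without proof precisely because it is the specialization of \Cref{thm:mainsep} to the case of no constraints (equivalently $g_1=1$), where the quadratic module is the cone of sums of squares, hypothesis (2) holds trivially since $\mathrm{U}_A()=\Sper(A)$, and real reducedness supplies the nondegeneracy needed for \Cref{seplem}.
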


\begin{rem}
 Let $X\subset R^n$ be some algebraic set and $A=R[X]$ its coordinate ring, i.e., the ring of all polynomial functions $X\to R$. Then $A$ is real reduced and the second condition of \Cref{thm:mainsep} is equivalent to the geometric condition that $$\{a\in X\mid g_1(a)>0,\ldots,g_r(a)>0\}$$is Zariski dense in $X$.
\end{rem}

\section{Preservation Theorems}\label{app:model}
The approach to study definable sets via structure-preserving maps can be used to obtain exact characterisations of existential positive, positive, and first-order definability. 
There are various formulations of such preservation theorems. Our formulation in Theorem~\ref{thm:mainpres} (in Section~\ref{sect:pres}) 
is closest to the setting of this article and we have been unable to find it in the literature, but it should be considered to be known. 

\subsection{Basic Terminology} 
\label{sect:basic} 
A \emph{signature} $\tau$ is a set of \emph{function symbols} and \emph{relation symbols}. Each symbol is equipped with an arity $k \in {\mathbb N}$. A \emph{$\tau$-structure} $\bA$ consists of a set $A$ (the \emph{domain}) and 
\begin{itemize}
    \item for each function symbol $f \in \tau$ of arity $k$ an operation $f^{\bA} \colon A^k \to A$, and
    \item for each relation symbol $R \in \tau$ of arity $k$ a  relation $R \subseteq A^k$. 
\end{itemize}
Note that the arity might in particular be $0$; function symbols of arity $0$ are also called \emph{constants}. 
We denote structures by $\bA$, $\bB$, $\bC$, and so on, and the corresponding underlying sets by the corresponding 
roman letters $A$, $B$, $C$, and so on. 
If $\bA$ and $\bB$ are structures with the same domain and $\bA$ is obtained from $\bB$ by dropping some of the relations and operations, then $\bA$ is called a \emph{reduct} of $\bB$, and $\bB$ is called an \emph{expansion} of $\bA$.

In the usual way we define \emph{$\tau$-terms} inductively; constant symbols from $\tau$ are $\tau$-terms,  variables are $\tau$-terms, and a function symbol of arity $k$ applied to a sequence of $\tau$-terms of length $k$ is a $\tau$-term. If $t$ is a $\tau$-term, we write $t(x_1,\dots,x_n)$ if all the variables that appear in $t$ are from $x_1,\dots,x_n$. 
An \emph{atomic $\tau$-formula} is 
an expression of the form
\begin{itemize}
\item $\bot$ (which stands for \emph{false}), or 
    \item $t_1 = t_2$ where $t_1$ and $t_2$ are $\tau$-terms, or 
\item $R(t_1,\dots,t_k)$ where $R \in \tau$ is a relation symbol of arity $k$ and $t_1,\dots,t_k$ are $\tau$-terms. 
\end{itemize}
A \emph{first-order $\tau$-formula} is built from atomic $\tau$-formulas in the usual inductive manner, using conjunction $\wedge$, disjunction $\vee$, negation {$\neg$}, universal quantification $\forall$, and existential quantification $\exists$. If $\phi$ is a first-order formula, we write $\phi(x_1,\dots,x_n)$ if all the variables that appear \emph{free} in $\phi$ (i.e., that are not bound by a universal or existential quantifier; for formal details, see~\cite{HodgesLong}) are from $x_1,\dots,x_n$. Formulas without free variables are called \emph{sentences}. 
If $\phi(x_1,\dots,x_k)$ is a $\tau$-formula and $a_1,\dots,a_k \in A$ we define 
$\bA \models \phi(a_1,\dots,a_n)$ if 
$(a_1,\dots,a_k)$ \emph{satisfies}  $\phi$ in $\bA$ (see~\cite{HodgesLong} for a precise definition). 
A $\tau$-formula $\phi(x_1,\dots,x_k)$ defines over $\bA$ a relation $R \subseteq A^k$, namely 
$$\big \{(a_1,\dots,a_k) \in A^k \mid \bA \models \phi(a_1,\dots,a_k) \big \}.$$ 
An $\tau$-formula $\phi(x_1,\dots,x_k)$ defines over $\bA$ an operation $f \colon A^{k-1} \to A$ if $\phi$ defines over $\bA$ the \emph{graph} of $f$, i.e., the relation $$
\{(a_1,\dots,a_k) \in A^k \mid f(a_1,\dots,a_{k-1}) = a_k \}.
$$
We say that $\bA$ is a \emph{first-order reduct} of $\bB$ if $\bA$ is a reduct of an expansion of $\bB$ by relations and operations that are first-order definable in $\bB$. 

\subsection{Preservation Theorems} 
\label{sect:pres} 
A first-order formula is called \emph{existential positive} if it is built without negation and universal quantification. Note that every existential positive formula is equivalent to a disjunction of \emph{primitive positive formulas}, i.e., formulas of the form
$$ \exists x_1,\dots,x_k (\psi_1 \wedge \dots \wedge \psi_m)$$
where $\psi_1,\dots,\psi_m$ are atomic formulas.

\begin{thm}\label{thm:mainpres}
Let $\bA$ be a structure. 
Then $\bA$ has an elementary extension $\bA^*$ such that the following holds. 
If $\bB$ is a first-order reduct of $\bA$ and $K \subseteq A^k$ is first-order definable over $\bA$, then
\begin{itemize}
    \item $K$ has an existential positive definition in $\bB$ if and only if 
    $K^*$ is preserved by all endomorphisms of $\bB^*$;
\item $K$ has a positive definition in $\bB$ if and only if 
$K^*$ is preserved by all surjective endomorphisms of $\bB^*$; 
    \item 
    $K$ has a first-order definition in $\bB$ if and only if $K^*$ is preserved by all automorphisms of $\bB^*$. 
\end{itemize}
Here, $\bB^*$ is the first-order reduct of $\bA^*$ defined by the same formulas as the first-order reduct $\bB$ of $\bA$, 
and $K^*$ is the relation defined over $\bA^*$ by the same first-order formula that defines $K$ over $\bA$. 
\end{thm}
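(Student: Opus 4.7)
The plan is to take $\bA^*$ to be a sufficiently saturated elementary extension of $\bA$: concretely, $\kappa$-saturated and strongly $\kappa$-homogeneous for some $\kappa > \max(|A|, |\tau|)$, where $\tau$ denotes the signature of $\bB$. Such a model exists by standard constructions (for instance, iterating ultrapowers, or invoking a monster model of $\mathrm{Th}(\bA)$ in a sufficiently rich set-theoretic universe). For the three \emph{only if} directions I would just invoke the classical preservation facts that existential positive formulas are preserved by arbitrary homomorphisms, positive formulas by surjective homomorphisms, and first-order formulas by isomorphisms; applied to $\bB^*$ and $K^*$, these immediately yield the respective preservation statements.

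For the existential positive converse I would argue by contradiction. Assuming $K^*$ is preserved by every endomorphism of $\bB^*$ but $K$ admits no existential positive definition in $\bB$, I would first use compactness, together with the fact that finite conjunctions and disjunctions of primitive positive formulas are existential positive, to obtain a tuple $\bar a \in (B^*)^k \setminus K^*$ satisfying every existential positive formula that holds on all of $K^*$. I would then show that the partial type $\{\bar y \in K\} \cup \{\neg \psi(\bar y) : \psi \text{ primitive positive}, \bar a \not\models_{\bB^*} \psi\}$ is finitely realizable in $\bB^*$, so by saturation it is realized by some $\bar c \in K^*$, necessarily satisfying $\mathrm{pptp}_{\bB^*}(\bar c) \subseteq \mathrm{pptp}_{\bB^*}(\bar a)$. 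A transfinite back-and-forth enumerating $B^*$, at each step realizing the primitive positive type of the newly added element over the partial image using the saturation of $\bB^*$, then extends $\bar c \mapsto \bar a$ to an endomorphism of $\bB^*$, contradicting preservation of $K^*$.

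The positive converse follows the same template, but with primitive positive formulas replaced by positive formulas and the back-and-forth refined to produce a \emph{surjective} endomorphism: in \emph{forward} stages one adds a new element to the domain of the partial homomorphism as before; in \emph{backward} stages one picks $d \in B^*$ that should eventually be hit, and uses saturation, together with the dual statement that $d$ shares its positive type with some suitable candidate preimage, to extend the map accordingly. The first-order case is the classical fact that in a strongly $\kappa$-homogeneous $\kappa$-saturated model the orbits of the automorphism group on $k$-tuples are precisely the complete parameter-free first-order types; so any automorphism-invariant first-order definable relation is a union of such types, and by the definability of $K^*$ together with saturation, finitely many suffice, yielding a first-order definition of $K$ in $\bB$.

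The main obstacle I anticipate lies in the positive case, specifically in justifying the \emph{backward} step of the surjective back-and-forth: one has to formulate and verify a precise dual condition guaranteeing that any element of $B^*$ can be hit while respecting all previously enforced positive-type constraints. This is where the strong homogeneity and large saturation level of $\bA^*$ become indispensable, and why $\kappa$ must be taken big enough that the enumeration of $B^*$ actually terminates within the extension. Once this is handled, the overall conclusion follows from $\bA \preceq \bA^*$: a first-order formula of any of the three restricted syntactic forms defines $K$ over $\bA$ if and only if the same formula defines $K^*$ over $\bA^*$.
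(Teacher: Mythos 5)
Your overall skeleton---take a tuple satisfying all existential positive consequences of $K^*$, find a witness in $K^*$ with a smaller primitive positive type, extend this correspondence to an endomorphism by transfinite back-and-forth, and finish with compactness---is the same as the paper's. The genuine gap is in the choice of $\bA^*$ and hence in the back-and-forth. A $\kappa$-saturated, strongly $\kappa$-homogeneous model with $\kappa > \max(|A|,|\tau|)$ does exist in ZFC, but any infinite $\kappa$-saturated structure has cardinality at least $\kappa$, and the standard constructions (iterated ultrapowers, monster models) give cardinality strictly larger than $\kappa$ in general. Your transfinite back-and-forth must enumerate all of $B^*$, so at late stages you must realize primitive positive (or positive) types over parameter sets of size up to $|B^*| \geq \kappa$, which $\kappa$-saturation does not cover. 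What you actually need is a model whose saturation level equals its cardinality, i.e.\ a saturated model, and the existence of saturated models of arbitrary theories is not provable in ZFC---it requires GCH-type hypotheses; your phrase ``in a sufficiently rich set-theoretic universe'' is exactly where the proof leaks. You notice the tension yourself (``$\kappa$ must be taken big enough that the enumeration of $B^*$ actually terminates''), but enlarging $\kappa$ also enlarges the model, so the problem does not go away.

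The paper avoids this by taking $\bA^*$ to be a \emph{special} elementary extension (the union of an elementary chain $(\bB_\lambda)_{\lambda<\kappa}$ with $\bB_\lambda$ being $\lambda^+$-saturated), which exists unconditionally, and by delegating the back-and-forth to the homomorphism theorem for special structures of equal cardinality (the analogue of Theorem 10.4.4 in Hodges, stated as Theorem~\ref{thm:maps}), together with the fact that first-order reducts of special structures are again special (Lemma~\ref{lem:red-special}); the latter is also what legitimizes your orbits-equal-types argument for $\operatorname{Aut}(\bB^*)$ rather than $\operatorname{Aut}(\bA^*)$. If you replace ``saturated/monster'' by ``special'' and run the back-and-forth along the elementary chain (or simply cite the homomorphism theorem for special models), your argument goes through: the compactness extraction of a finite existential positive definition, the dual back step for the surjective case, and the first-order case are all sound and match the paper.
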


We prove Theorem~\ref{thm:mainpres} in Section~\ref{sect:proof-pres}. 

\begin{rem}
In our setting, the real-closed field $\R$ plays the role of $\bA$.
For the results in Section~\ref{sect:groupring} we then 
work with the first-order reduct 
$\bB$ of $\bA$ that contains all spectrahedra as relations, to show that specific relations $K$ with a first-order definition in $\bA$ do not have a positive definition in $\bB$. 
In Section~\ref{sec:ave} we work with the first-order reduct $\bB$ of $\bA$ that contains all relations that are defined by LMIs of size $k$ to show that some specific relations $K$ with a first-order definition in $\bA$ do not have a positive definition in $\bB$. In both settings, Lemma~\ref{lem:sdpdeg} is a statement about the endomorphisms of $\bB^*$. 
\end{rem}

The elementary extension $\bA^*$ of $\bA$ in \Cref{thm:mainpres} will be a so-called \emph{special model} (see, e.g.,~\cite{HodgesLong}). In the context of preservation theorems often \emph{saturated structures} are used. 
Special structures are more technical to define than saturated structures. However, the existence of saturated structures requires the generalised continuum hypothesis which we would like to avoid. 

\subsection{Saturated and Special Structures} 
For $n \geq 0$, an
\emph{$n$-type}\index{$n$-type}\index{type} of a theory $T$ is a set $p$ 
of formulas with free variables $x_1,\dots,x_n$ such that $p \cup T$ 
is satisfiable.  
An \emph{$n$-type over a structure $\bA$} is an $n$-type of the first-order theory of $\fA$;
note that we do not admit parameters in these formulas (i.e., we allow constants to appear in the formulas of the type only if they belong to the signature).  
An $n$-type $p$ of $\bA$ is \emph{realised}\index{realised type} in $\bA$ if there exist $a_1,\ldots,a_n \in A$ such that $\bA \models \phi(a_1,\ldots,a_n)$ for each $\phi \in p$. 
For an infinite cardinal $\kappa$, a structure $\bA$ is \emph{$\kappa$-saturated}\index{$\kappa$-saturated} if, 
for all $\beta < \kappa$ and expansions $\bA'$ of $\bA$ by at most $\beta$ constants, every $1$-type of $\bA'$ is realised 
in $\bA'$. We say that an infinite structure $\bA$ is \emph{saturated} if it is $|A|$-saturated.
If $\lambda$ is a cardinal, then $\lambda^+$ denotes the successor cardinal of $\lambda$. 

\begin{thm}[Theorem 4.3.12 in~\cite{Marker}]
\label{thm:sat-ext}
Let $\tau$ be a signature and $\lambda \geq |\tau|$. 
Then every $\tau$-structure $\bB$ has a $\lambda^+$-saturated
elementary extension of cardinality $\leq |B|^\lambda$.
\end{thm}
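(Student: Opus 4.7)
The plan is to construct $\bA^*$ as the union of an elementary chain $(\bB_\alpha)_{\alpha < \lambda^+}$ in which we successively realize all $1$-types over all ``small'' parameter sets, and then verify both the cardinality bound and the saturation property directly. The two main tools are the compactness theorem (to realize types inside elementary extensions) and cardinal arithmetic to control the sizes at each stage.

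The first step is a one-shot realization lemma: given any $\tau$-structure $\bC$, produce an elementary extension $\bC'$ of cardinality at most $|C|^{\lambda}$ in which every $1$-type $p(x)$ over every $X \subseteq C$ with $|X| \leq \lambda$ is realized. I would bound the number of such types: for fixed $X$ the type is a subset of the set of $\tau$-formulas with parameters in $X$, so there are at most $2^{|X|+|\tau|} \leq 2^{\lambda}$ types (using $\lambda \geq |\tau|$), and there are at most $|C|^{\lambda}$ choices of $X$; hence at most $|C|^{\lambda} \cdot 2^{\lambda} = |C|^{\lambda}$ types in total. For each such type $p$ introduce a fresh constant $c_p$; the union of the elementary diagram of $\bC$ with $\bigcup_p p(c_p)$ is finitely satisfiable in $\bC$ (each finite piece touches only finitely many formulas from a single type, which is itself finitely satisfiable), so by compactness it has a model. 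Applying downward L\"owenheim--Skolem to this model yields the required $\bC'$ of cardinality at most $|C|^{\lambda}$.

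The second step is to iterate transfinitely. Set $\bB_0 := \bB$, define $\bB_{\alpha+1} := (\bB_\alpha)'$ at successor stages using the first step, and take unions of elementary chains at limit stages. Let $\bA^* := \bigcup_{\alpha < \lambda^+} \bB_\alpha$; this is an elementary extension of $\bB$. For the cardinality, induction along the chain shows $|\bB_\alpha| \leq |B|^{\lambda}$ for every $\alpha < \lambda^+$: at successors, $(|B|^{\lambda})^{\lambda} = |B|^{\lambda \cdot \lambda} = |B|^{\lambda}$; at limits $\alpha < \lambda^+$ the union has size at most $\lambda \cdot |B|^{\lambda} = |B|^{\lambda}$. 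Hence $|A^*| \leq \lambda^+ \cdot |B|^{\lambda}$, which equals $|B|^{\lambda}$ because $2^\lambda > \lambda$ forces $|B|^{\lambda} \geq 2^{\lambda} \geq \lambda^+$.

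The third step verifies $\lambda^+$-saturation. Given $\beta < \lambda^+$, an expansion $(\bA^*, a_i)_{i < \beta}$, and a $1$-type $p$ in this expansion, the set of parameters from $A^*$ appearing in $p$ has cardinality $\leq \lambda$. Since $\lambda^+$ is regular and each parameter lies in some $\bB_{\alpha}$ with $\alpha < \lambda^+$, all parameters lie in a single $\bB_\gamma$ with $\gamma < \lambda^+$. Then $p$ is a $1$-type over a subset of $\bB_\gamma$ of size $\leq \lambda$, so by construction of $\bB_{\gamma+1} = (\bB_\gamma)'$ it is realized in $\bB_{\gamma+1}$, hence in $\bA^* \succeq \bB_{\gamma+1}$. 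The only real subtlety is the cardinal arithmetic in step one (checking that we genuinely stay within $|C|^{\lambda}$ when bounding the number of types), and handling parameters in the $\lambda^+$-saturation verification via the regularity of $\lambda^+$; everything else is a straightforward compactness-plus-chain argument.
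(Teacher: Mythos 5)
The paper does not prove this statement; it is quoted verbatim from Marker (Theorem 4.3.12), and your argument is precisely the standard elementary-chain proof given there: realize all $1$-types over parameter sets of size $\leq\lambda$ at each of $\lambda^+$ successor stages, control cardinalities via $(|B|^\lambda)^\lambda=|B|^\lambda$ and $\lambda^+\leq 2^\lambda\leq|B|^\lambda$, and use regularity of $\lambda^+$ to catch the parameters of any type in a single stage. Your proof is correct and is essentially the cited one, so there is nothing to add.
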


So if we assume the Generalised
Continuum Hypothesis, then there
are saturated models of size $\kappa^+$ for all cardinals $\kappa$ (Corollary 10.2.4 in~\cite{HodgesLong}).

\begin{Def}
A structure $\bA$ of cardinality $\kappa \geq \omega$ is  
\emph{special} if $\bA$ is the union of an elementary chain $(\bB_{\lambda})_{\lambda < \kappa}$ and each $\bB_{\lambda}$ is $\lambda^+$-saturated. 
\end{Def}

Saturated structures are special, because we can write a saturated structure 
$\bA$ of cardinality $\kappa$ 
as $(\bB_{\lambda})_{\lambda < \kappa}$ where 
$\bB_{\lambda} = \bA$; since $\bA$ is $\lambda^+$-saturated for all $\lambda < \kappa$, this proves that $\bA$ is special. The converse is not true; see~\cite[Exercises 6 to 8 on page 514]{HodgesLong}. The advantage of special structures is that they always exist, independently from set-theoretic assumptions. 

\begin{thm}[Theorem 10.4.2.~in~\cite{HodgesLong}]
\label{thm:special-exists}
Every infinite structure has a special elementary extension. 
\end{thm}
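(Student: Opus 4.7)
The plan is to construct $\bA^*$ as the union of an elementary chain of progressively more saturated structures indexed by a sufficiently large cardinal $\kappa$. The essential tool is \Cref{thm:sat-ext}, which supplies $\lambda^+$-saturated elementary extensions of controlled cardinality; the remainder of the argument is transfinite recursion plus a Tarski--Vaught chain argument.

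First, I would fix an uncountable strong limit cardinal $\kappa > |A| + |\tau| + \aleph_0$ of sufficiently large cofinality that $\mu^\lambda < \kappa$ for all $\mu, \lambda < \kappa$; such cardinals exist in ZFC, for instance any $\beth$-fixed point of cofinality strictly greater than its index. I would then define by transfinite recursion on ordinals $\lambda < \kappa$ an elementary chain $(\bA_\lambda)_{\lambda < \kappa}$ with $\bA_0 = \bA$, such that each $\bA_\lambda$ is $(|\lambda|+\aleph_0)^+$-saturated and satisfies $|\bA_\lambda| < \kappa$. At a successor stage $\lambda + 1$, I would apply \Cref{thm:sat-ext} to $\bA_\lambda$ with saturation parameter $|\lambda+1|+\aleph_0$ to obtain $\bA_{\lambda+1}$; its cardinality is bounded by $|\bA_\lambda|^{|\lambda+1|+\aleph_0}$, which is below $\kappa$ by the strong-limit hypothesis. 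At a limit $\lambda$, I would first form $\bA'_\lambda := \bigcup_{\mu < \lambda} \bA_\mu$ (elementary over each $\bA_\mu$ by the Tarski--Vaught chain theorem) and then apply \Cref{thm:sat-ext} once more to this union to obtain $\bA_\lambda$; the cofinality condition ensures $|\bA'_\lambda| \leq |\lambda| \cdot \sup_{\mu<\lambda} |\bA_\mu| < \kappa$, so the iterated saturation also stays below $\kappa$.

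Then $\bA^* := \bigcup_{\lambda < \kappa} \bA_\lambda$ is an elementary extension of $\bA$ of cardinality $\kappa$, again by Tarski--Vaught. Restricting attention to the subchain indexed by the infinite cardinals $\lambda < \kappa$ shows that $\bA^*$ is special in the sense of the preceding definition: this subchain still has length $\kappa$ (because $\kappa$ is a limit cardinal), its union is still $\bA^*$, and its $\lambda$-th member $\bA_\lambda$ is $\lambda^+$-saturated by construction.

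The main obstacle is purely cardinal bookkeeping. Because \Cref{thm:sat-ext} only bounds the saturated extension by $|\bA_\lambda|^\lambda$, without the combined strong-limit and cofinality assumptions on $\kappa$ the chain could outgrow $\kappa$ at either a successor or a limit stage, and the resulting union would not have the right cardinality to be special. Once an appropriate $\kappa$ is fixed, the construction is an entirely routine transfinite recursion, and no model-theoretic input is needed beyond \Cref{thm:sat-ext} and the Tarski--Vaught chain theorem.
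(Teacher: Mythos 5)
Your overall strategy is the textbook one (the paper itself only cites Hodges for this), namely a transfinite elementary chain of increasingly saturated extensions obtained from \Cref{thm:sat-ext}, glued by Tarski--Vaught. But the cardinal bookkeeping, which you yourself identify as the main obstacle, contains a genuine error. At a limit stage $\lambda<\kappa$ your inductive hypothesis only gives $|\bA_\mu|<\kappa$ for each $\mu<\lambda$, so to conclude $\sup_{\mu<\lambda}|\bA_\mu|<\kappa$ you need $\mathrm{cf}(\kappa)>|\lambda|$; since this must hold for \emph{every} $\lambda<\kappa$, your argument really requires $\kappa$ to be regular. A regular strong limit cardinal is an inaccessible, whose existence is not provable in ZFC, so the hypothesis you place on $\kappa$ is not available. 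The justification you offer is also incoherent: a $\beth$-fixed point satisfies $\kappa=\beth_\kappa$, so ``cofinality strictly greater than its index'' would mean $\mathrm{cf}(\kappa)>\kappa$; and a $\beth$-fixed point of merely uncountable cofinality does not help at limit stages of uncountable cardinality. (Separately, the property $\mu^\lambda<\kappa$ for all $\mu,\lambda<\kappa$ has nothing to do with cofinality: it already follows from $\kappa$ being a strong limit, since $\mu^\lambda\le 2^{\max(\mu,\lambda)}$.)

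The repair is standard and local: strengthen the induction hypothesis from ``$|\bA_\lambda|<\kappa$'' to a uniform bound depending only on the stage, e.g.\ $|\bA_\lambda|\le 2^{\,|\lambda|+c}$ where $c=|A|+|\tau|+\aleph_0$, which \Cref{thm:sat-ext} allows you to maintain: at successors $\bigl(2^{\,|\lambda|+c}\bigr)^{|\lambda|+\aleph_0}=2^{\,|\lambda|+c}$, and at limits $\bigl|\bigcup_{\mu<\lambda}\bA_\mu\bigr|\le |\lambda|\cdot\sup_{\mu<\lambda}2^{\,|\mu|+c}\le 2^{\,|\lambda|+c}$, after which one more application of \Cref{thm:sat-ext} keeps the same bound. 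With this invariant, $2^{\,|\lambda|+c}<\kappa$ follows from $\kappa$ being a strong limit alone, so any $\kappa=\beth_\delta$ with $\delta$ a limit ordinal and $\beth_\delta>c$ works, and such $\kappa$ exist in ZFC at arbitrarily large sizes; no cofinality or regularity assumption is needed. You should also add the one-line argument that $|\bA^*|=\kappa$ (not just $\le\kappa$): each cardinal stage $\lambda$ is $\lambda^+$-saturated and infinite, hence of size at least $\lambda^+$, so the union has size at least $\sup_{\lambda<\kappa}\lambda^+=\kappa$; this is needed because the definition of special indexes the chain by cardinals below the cardinality of the structure itself. With these corrections your construction and the final passage to the subchain indexed by infinite cardinals are correct.
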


\subsection{Proof of the Preservation Theorems}\label{sect:proof-pres}
The following is at the heart of the proof of \Cref{thm:mainpres}. 

\begin{thm}\label{thm:maps}
Let $\bA$ and $\bB$ be special structures of the same signature and cardinality. 
\begin{itemize}
\item If every first-order sentence that is true in $\bA$ is true in $\bB$, then there exists an isomorphism between $\bA$ and $\bB$. 
\item If every positive sentence that is true in $\bA$ is true in $\bB$, then there exists a surjective homomorphism from $\bA$ to $\bB$. 
\item If every existential positive sentence that is true in $\bA$ is true in $\bB$, then there exists a homomorphism from $\bA$ to $\bB$. 
\end{itemize}
\end{thm}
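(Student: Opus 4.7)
My plan is to carry out a transfinite back-and-forth construction exploiting the chain structure of special models. We will write $\bA = \bigcup_{\lambda < \kappa} \bA_\lambda$ and $\bB = \bigcup_{\lambda < \kappa} \bB_\lambda$ as elementary chains with each $\bA_\lambda$, $\bB_\lambda$ being $\lambda^+$-saturated, and enumerate $A$ and $B$ as $(a_\alpha)_{\alpha<\kappa}$ and $(b_\alpha)_{\alpha<\kappa}$ so that $a_\alpha, b_\alpha$ lie in the pieces of the chain indexed by a cardinal comparable to $|\alpha|$. We build an ascending chain of partial maps $f_\alpha \colon A \rightharpoonup B$ of cardinality less than $\kappa$, whose union $f$ will be the map we seek. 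For the third item only ``forth'' steps are performed, bringing each $a_\alpha$ into the domain; for the second item forth and back steps are alternated to ensure surjectivity; for the first item similarly to ensure bijectivity. The inductive invariants will be: preservation of existential positive formulas (third item), of positive formulas (second item), and of all first-order formulas (first item), in the forward direction.

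The heart of each case will be an extension lemma via type-realisation. For the third item, to add $a = a_\alpha$ with current parameters $\bar a$ and $\bar c = f_\alpha(\bar a)$, we consider the set $p(y)$ of existential positive formulas $\phi(\bar c, y)$ with $\bA \models \phi(\bar a, a)$. Finite satisfiability of $p$ in $\bB$ is immediate: the conjunction of finitely many existential positive formulas is again existential positive, and the corresponding sentence transfers from $\bA$ to $\bB$ by the invariant. A suitably chosen $\bB_\lambda$ is $\lambda^+$-saturated and thus realises $p$ by some $b$, which we take as the new image. The first item admits the analogous extension step with complete types replacing existential positive types, in the classical Morley--Vaught manner.

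The delicate case will be the second item. The forth step proceeds as above. For the back step, given $b = b_\alpha$, one must realise in $\bA$ the type $q(y) = \{\neg\phi(\bar a, y) : \phi \text{ positive},\ \bB \not\models \phi(\bar c, b)\}$. Finite unsatisfiability would yield positive $\phi_1,\dots,\phi_k$ with $\bA \models \forall y\,\bigvee_i \phi_i(\bar a, y)$ while $\bB \not\models \phi_i(\bar c, b)$ for each $i$, which must contradict forward preservation of the universal positive formula $\forall y\,\bigvee_i \phi_i$ through $f_\alpha$. The principal obstacle will be to justify this forward preservation of \emph{universal} positive formulas by a partial $f_\alpha$ that is not yet surjective; this will be handled by folding $\bar a$ and $\bar c$ into the signature as named constants, so that the offending formula becomes a positive sentence in an expanded language, and then invoking the global hypothesis $\mathrm{Th}^+(\bA) \subseteq \mathrm{Th}^+(\bB)$ transferred to this expansion via the positive preservation already established at the current stage of the construction. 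Once the extension lemma is in place, the transfinite recursion runs routinely and $f = \bigcup_\alpha f_\alpha$ delivers the desired surjective homomorphism.
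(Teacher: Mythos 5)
Your proposal is correct and is essentially the argument the paper intends: the paper proves the first item by citing the back-and-forth uniqueness theorem for special models (Theorem 10.4.4 in Hodges) and asserts the other two follow analogously, which is exactly the forth-only and back-and-forth-with-positive-types construction you spell out. The only remark worth making is that your ``principal obstacle'' in the back step is not really an obstacle: forward preservation of universal positive formulas by $f_\alpha$ is just an instance of the inductive invariant (preservation of \emph{all} positive formulas), so the detour through named constants is unnecessary.
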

\begin{proof}
The first statement can be shown by a back and forth argument, and is Theorem 10.4.4.~in~\cite{HodgesLong}.
The other two statements can be shown analogously. 
\end{proof}

\begin{lem}[see Theorem 10.4.5.~in~\cite{HodgesLong}]
\label{lem:red-special}
First-order reducts of special structures are special. 
\end{lem}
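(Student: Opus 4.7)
The plan is to descend the witnessing elementary chain for $\bA$ being special to one for its first-order reduct $\bA'$. Let $\tau$ and $\tau'$ denote the signatures of $\bA$ and $\bA'$, respectively; each symbol $R \in \tau'$ comes with a defining first-order $\tau$-formula $\phi_R$ such that $R^{\bA'}$ is the relation defined over $\bA$ by $\phi_R$. Let $(\bB_\lambda)_{\lambda < \kappa}$ be an elementary chain of $\lambda^+$-saturated $\tau$-structures with union $\bA$. Define $\bB'_\lambda$ to be the $\tau'$-structure on the domain $B_\lambda$ in which each $R \in \tau'$ is interpreted by the relation defined by $\phi_R$ over $\bB_\lambda$. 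I claim that $(\bB'_\lambda)_{\lambda < \kappa}$ witnesses that $\bA'$ is special.

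The backbone of the argument is a syntactic translation $\psi \mapsto \tilde\psi$ sending a $\tau'$-formula to a $\tau$-formula defining the same relation in every $\bB'_\lambda$ and in $\bA'$; it is built by structural recursion, replacing each atomic occurrence $R(t_1,\ldots,t_n)$ by $\phi_R(t_1,\ldots,t_n)$, with $\tau'$-function symbols unfolded via their graphs through auxiliary existential quantifiers. First I would check that, combined with $\bB_\mu \preceq \bB_\nu \preceq \bA$ for $\mu \leq \nu < \kappa$, this translation immediately gives $\bB'_\mu \preceq \bB'_\nu \preceq \bA'$, and that $A = \bigcup_\lambda B_\lambda$ yields $\bA' = \bigcup_\lambda \bB'_\lambda$.

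The main step is to show each $\bB'_\lambda$ is $\lambda^+$-saturated. Fix $\beta < \lambda^+$, and let $\bC'$ be an expansion of $\bB'_\lambda$ by $\beta$-many constants $c_i$ interpreted as $a_i \in B_\lambda$; let $p(x)$ be a 1-type of $\bC'$. Let $\bC$ be the corresponding $(\tau \cup \{c_i\})$-expansion of $\bB_\lambda$ by the same interpretations, and set $\tilde p = \{\tilde\psi : \psi \in p\}$. Any finite conjunction of formulas from $p$ is satisfied by some element of $\bC'$, so, under the translation, the corresponding $\tau$-conjunction is satisfied in $\bC$; hence $\tilde p$ is finitely satisfiable, and thus a 1-type of $\bC$. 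By $\lambda^+$-saturation of $\bB_\lambda$, $\tilde p$ is realised by some $b \in B_\lambda$, and by the translation equivalence the same $b$ realises $p$ in $\bC'$.

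There is no deep obstacle: the whole argument is a systematic check that first-order reducts commute with the model-theoretic operations at play (unions of chains, elementarity, realisation of types), which all reduce to the translation $\psi \mapsto \tilde\psi$. The step requiring most care is setting up this translation uniformly when $\tau'$ contains function symbols, but this is a routine recursive construction.
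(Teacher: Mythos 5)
Your proof is correct. Note that the paper itself gives no argument for this lemma — it only cites Theorem 10.4.5 of Hodges — so there is no in-text proof to compare against; your chain-transfer argument is essentially the standard proof underlying that citation, and it is welcome to have it spelled out. Everything does indeed reduce to the satisfaction equivalence $\bB'_\lambda \models \psi(\bar a) \Leftrightarrow \bB_\lambda \models \tilde\psi(\bar a)$ (and likewise over $\bA$ versus $\bA'$), together with the standard fact that, since the theory of a structure is complete, a type is consistent with that theory if and only if each of its finite subsets is realised in the structure itself — this is what justifies both your passage from $p$ to $\tilde p$ and the transfer of the realisation back. The one point you leave implicit and that deserves an explicit sentence is the well-definedness of $\bB'_\lambda$ when $\tau'$ contains function symbols: the sentence asserting that the defining formula is the graph of a total function holds in $\bA$, hence in $\bB_\lambda$ by elementarity, so $B_\lambda$ is closed under the induced operations and these agree with the restrictions of the operations of $\bA'$. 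This same observation is what makes each $\bB'_\lambda$ a substructure of $\bA'$, so that the union of the new chain is literally $\bA'$ and the translation argument applies uniformly. With that remark added, your argument is complete and matches the intended scope of the lemma (reducts given by parameter-free defining formulas, as in the appendix's definitions).
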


\begin{proof}
[Proof of \Cref{thm:mainpres}]
In each of the three statements, the forward direction is immediate, since
homomorphisms preserve existential positive formulas, surjective homomorphisms preserve positive formulas, and automorphisms preserve first-order formulas. Moreover, if $K$ has a certain definition in $\bB$, then $K^*$ has the corresponding definition in $\bB^*$ since all the involved structures and sets are first-order definable in $\bA$ and $\bA^*$, respectively, and $\bA$ and $\bA^*$ have the same first-order theory. 

The three statements are well-known if $\bA$ is finite (see, e.g.,~\cite{KaluzninPoeschel}) so let us assume that $\bA$ is infinite. 
Let $\bA^*$ be a special elementary extension of $\bA$ which exists by \Cref{thm:special-exists}, 
and let $S$ be the first-order theory of $\bA$ (and $\bA^*$). 
Let $\bB$ be a first-order reduct of $\bA$ with signature $\tau$ and let $K \subseteq A^k$ be first-order definable over $\bA$. 
Let $\bB^*$ be the reduct of $\bA^*$ and $K^* \subseteq (A^*)^k$ be as in the statement of the theorem. By
\Cref{lem:red-special} we have that $\bB^*$ is special as well. 

To show the converse of the first statement, suppose that $K^*$ is preserved by all  endomorphisms of $\bB^*$. 
Let $\Psi$ be the set of all existential positive 
$\tau$-formulas $\psi(x_1,\dots,x_k)$ 
that hold on all tuples from $K^*$ in $\bB^*$. 
Let $\underline a$ be a tuple satisfying 
 $\Psi$ in $\bB^*$. Note that if there is no such tuple,
 then $\phi$ must be unsatisfiable in $\bB$. In this case, $\phi$ is equivalent to the existential positive formula $\bot$ and we are done. 

 Let $U$ be the set of all primitive positive $\tau$-formulas $\theta(\underline x)$ such that $\bB^* \models \neg \theta(\bar a)$. For any first-order $\tau$-formula $\eta$ 
 let $T(\eta)$ be the first-order formula 
 obtained from $\eta$ by replacing each atomic subformula of $\eta$ by its first-order definition over $\bA^*$.
 We claim that $$S \cup \{ T(\neg \theta) \mid \theta \in U \} \cup \{\phi \}$$ is satisfiable. Otherwise, by the compactness theorem of first-order logic there would be a finite subset $V$ of $U$ such that $S \cup \{ T(\neg \theta) \mid \theta \in V\} \cup \{ \phi \}$ is unsatisfiable.
Since $\neg T(\theta) = T(\neg \theta)$
we get that $S \cup \{\phi\}$ implies $\bigvee_{\theta \in V}  T(\theta)$. 
Thus, $\psi := \bigvee_{\theta \in V} \theta$ is an existential positive formula such that $S \cup \{\phi\} \models T(\psi)$.
But then $\psi \in \Psi$ and $\bB^* \models \psi(\underline a)$. 
This is in contradiction to the assumption that $\bB^* \models \neg \theta(\underline a)$ for all $\theta \in U$. 

Since $\bA^*$ is a special model of $S$, the $n$-type 
$\{ T(\neg \theta(\bar x)) \mid \theta \in U \} \cup \{\phi(\bar x) \}$ is realised by some $b \in (A^*)^k$ in $\bA^*$. 
Let $c_1,\dots,c_k$ be new constant symbols
and let $\bC$ be the $\tau \cup \{c_1,\dots,c_k\}$-expansion of $\bB^*$ where $\underline c^{\bC} = \underline b$.
Let $\bD$ be the $\tau \cup \{c_1,\dots,c_k\}$-expansion of $\bB^*$ where $\underline c^{\bC} = \underline a$. 
Clearly, $\bC$ and $\bD$ are special as well. 
Every primitive positive $(\tau \cup \{c_1,\dots,c_k\})$-sentence $\theta$
that is true in $\bD$ is also
true in $\bC$: otherwise, if $\theta$ were false in $\bC$, then write $\theta$ as $\theta(\underline c)$ for $\theta \in U$, 
in contradiction to the assumption that $\bD \models \{\neg \theta(\underline c) \; | \; \theta \in U\}$. 
Hence, since $\bC$ is special and by \Cref{thm:maps}, 
there exists a homomorphism from $\bD$ to
$\bC$. This homomorphism maps $\underline b$ to $\underline a$.
Since $\underline b$ satisfies $\phi$ in $\bA^*$ we have that
$\underline b \in K^*$, and since $K^*$ is preserved by endomorphisms of $\bB^*$, we have $\underline a \in K^*$. 
We conclude that $S \cup \{ T(\theta) \mid \theta \in \Psi\} \cup \{\neg \phi\}$ is unsatisfiable, and again by compactness there exists a finite subset $\Psi'$ of
$\Psi$ such that $S \cup \{T(\theta) \mid \theta \in \Psi'\} \cup \{\neg \phi\}$ is unsatisfiable. Note that $\psi := \bigwedge \Psi'$ is
an existential positive formula
and $S \models \forall \underline x (T(\psi) \Leftrightarrow \phi)$ 
which is what we wanted to show. 

The converse directions for statement two and three of the theorem can be shown analogously. 
\end{proof}

\bibliographystyle{plain}
\bibliography{references}

\end{document}